\newtheorem{theorem}{Theorem}[section]
\newtheorem{proposition}[theorem]{Proposition}
\newtheorem{lemma}[theorem]{Lemma}
\theoremstyle{definition}
\newtheorem{definition}[theorem]{Definition}
\newtheorem{remark}[theorem]{Remark}
\numberwithin{equation}{section}
\def\eqn#1$$#2$${\begin{equation}\label#1#2\end{equation}}
\def\charfn_#1{{\raise1.2pt\hbox{$\chi_{\kern-1pt\lower3pt\hbox{{$\scriptstyle#1$}}}$}}}
\newcommand{\pushright}[1]{\ifmeasuring@#1\else\omit\hfill$\displaystyle#1$\fi\ignorespaces}
\newcommand{\pushleft}[1]{\ifmeasuring@#1\else\omit$\displaystyle#1$\hfill\fi\ignorespaces}
\newcommand{\ov}[1]{\mkern 1.5mu\overline{\mkern-1.5mu#1\mkern-1.5mu}\mkern 1.5mu}
\newcommand{\eps}{\varepsilon}
 \DeclareMathOperator*{\osc}{osc}
\def\dist{\operatorname{dist}}
\newcommand{\dive}{\textnormal{div}}
\newcommand{\data}{\texttt{data}}
\def\<{\langle}
\def\>{\rangle}
\def\Tilde{\widetilde}
\DeclarePairedDelimiter{\norm}{\lVert}{\rVert}
\DeclarePairedDelimiter\abs{\lvert}{\rvert}
\def\({\left(}
\def\){\right)}
\def\[{\left[}
\def\]{\right]}
\newcommand{\cal}[1]{\mathcal{#1}}
\def\er{\mathbb R}
\newcommand{\ern}{\mathbb{R}^n}
\def\loc{{\operatorname{loc}}}
\def\R{\mathbb R}
\def\N{\mathbb N}
\newcommand{\dx}{\,dx}
\newcommand{\supp}{{\rm supp}}
\newcommand{\M}{\mathbb{M}}
\newcommand{\mcA}{\mathcal{A}}
\newcommand{\BMO}{\mathrm{BMO}}
\newcommand{\barM}{\ov{\mathbb{M}}}
\def\mean#1{\mathchoice%
          {\mathop{\kern 0.2em\vrule width 0.6em height 0.69678ex depth -0.58065ex
                  \kern -0.8em \intop}\nolimits_{\kern -0.4em#1}}%
          {\mathop{\kern 0.1em\vrule width 0.5em height 0.69678ex depth -0.60387ex
                  \kern -0.6em \intop}\nolimits_{#1}}%
          {\mathop{\kern 0.1em\vrule width 0.5em height 0.69678ex
              depth -0.60387ex
                  \kern -0.6em \intop}\nolimits_{#1}}%
          {\mathop{\kern 0.1em\vrule width 0.5em height 0.69678ex depth -0.60387ex
                  \kern -0.6em \intop}\nolimits_{#1}}}
\def\vintslides_#1{\mathchoice%
          {\mathop{\kern 0.1em\vrule width 0.5em height 0.697ex depth -0.581ex
                  \kern -0.6em \intop}\nolimits_{\kern -0.4em#1}}%
          {\mathop{\kern 0.1em\vrule width 0.3em height 0.697ex depth -0.604ex
                  \kern -0.4em \intop}\nolimits_{#1}}%
          {\mathop{\kern 0.1em\vrule width 0.3em height 0.697ex depth -0.604ex
                  \kern -0.4em \intop}\nolimits_{#1}}%
          {\mathop{\kern 0.1em\vrule width 0.3em height 0.697ex depth -0.604ex
                  \kern -0.4em \intop}\nolimits_{#1}}}
\newcommand{\aveint}[2]{\mathchoice%
          {\mathop{\kern 0.2em\vrule width 0.6em height 0.69678ex depth -0.58065ex
                  \kern -0.8em \intop}\nolimits_{\kern -0.45em#1}^{#2}}%
          {\mathop{\kern 0.1em\vrule width 0.5em height 0.69678ex depth -0.60387ex
                  \kern -0.6em \intop}\nolimits_{#1}^{#2}}%
          {\mathop{\kern 0.1em\vrule width 0.5em height 0.69678ex depth -0.60387ex
                  \kern -0.6em \intop}\nolimits_{#1}^{#2}}%
          {\mathop{\kern 0.1em\vrule width 0.5em height 0.69678ex depth -0.60387ex
                  \kern -0.6em \intop}\nolimits_{#1}^{#2}}}
\def\Xint#1{\mathchoice
    {\XXint\displaystyle\textstyle{#1}}%
    {\XXint\textstyle\scriptstyle{#1}}%
    {\XXint\scriptstyle\scriptscriptstyle{#1}}%
    {\XXint\scriptscriptstyle\scriptscriptstyle{#1}}%
    \!\int}
\def\XXint#1#2#3{\setbox0=\hbox{$#1{#2#3}{\int}$}
    \vcenter{\hbox{$#2#3$}}\kern-0.5\wd0}
\def\bint{\Xint-}
\def\dashint{\Xint{\raise4pt\hbox to7pt{\hrulefill}}}
\newtoks\by
\newtoks\paper
\newtoks\book
\newtoks\jour
\newtoks\yr
\newtoks\pages
\newtoks\vol
\newtoks\publ
\def\ota{{\hbox{\bf ???}}}
\def\cLear{\by=\ota\paper=\ota\book=\ota\jour=\ota\yr=\ota
\pages=\ota\vol=\ota\publ=\ota}
\def\endpaper{\the\by, \textit{\the\paper},
{\the\jour} \textbf{\the\vol} (\the\yr), \the\pages.\cLear}
\def\endbook{\the\by, \textit{\the\book},
\the\publ, \the\yr.\cLear}
\def\endpap{\the\by, \textit{\the\paper}, \the\jour.\cLear}
\def\endproc{\the\by, \textit{\the\paper}, \the\book, \the\publ,
\the\yr, \the\pages.\cLear}
\author[S.-S. Byun]{Sun-Sig Byun}
\address{Department of Mathematical Sciences and Research Institute of Mathematics,
Seoul National University \\
Seoul 08826, Republic of Korea}
\email{byun@snu.ac.kr}
\author[Y. Cho]{Yumi Cho}
\address{Research Institute of Mathematics, Seoul National University \\
Seoul 08826, Republic of Korea}
\email{imuy31@snu.ac.kr}
\author[S. Ryu]{Seungjin Ryu}
\address{Department of Mathematics, University of Seoul \\
Seoul 02504, Republic of Korea}
\email{seungjinryu@uos.ac.kr}
\thanks{
This work was supported by the National Research Foundation of Korea(NRF) grant funded by the Korea government(MSIT) 
(No. NRF-2022R1A2C1009312 (S.-S. Byun), 
RS-2022-NR074838 (Y. Cho), and
NRF-2020R1C1C1A01014310 (S. Ryu)).
}
\subjclass[2020]{Primary: 35B65; Secondary: 35J70, 35J75}
\keywords{Matrix weight, double phase, Calder\'{o}n-Zygmund estimates, $\log$-$\BMO$}
\begin{document}

\title[]{Calder\'{o}n-Zygmund estimates for double phase problems with matrix weights}

\begin{abstract}
We establish an optimal Calder\'{o}n-Zygmund theory for nonuniformly elliptic double phase problems with matrix weights. For $1<p<q<\infty$, $a(\cdot)\in C^{0,\alpha}(\Omega)$ ($0<\alpha\le1$), 
and a symmetric, almost everywhere positive definite matrix weight $\M$ with $|\M(x)|\,|\M(x)^{-1}|\le\Lambda$ for some constant $\Lambda\ge 1$ and small $|\log \M|_{\mathrm{BMO}}$, we prove, for every $\gamma>1$,
$$
(|\M F|^p+a(x)|\M F|^q)\in L^\gamma_{\mathrm{loc}}
\;\Longrightarrow\;
(|\M Du|^p+a(x)|\M Du|^q)\in L^\gamma_{\mathrm{loc}}.
$$
Our argument combines a freezing of the logarithm of the matrix field, $\log \M$, with a fractional maximal-operator method governed by the Muckenhoupt-Wheeden $\mathcal{A}_{p,s}$ classes (where $1/s=1/p-\alpha/(nq)$). This yields scale-invariant comparison and level-set estimates and precludes Lavrentiev gaps at the sharp threshold $q/p\le 1+\alpha/n$. Our result recovers the identity case $\,\M\equiv {\rm I}_n\,$, i.e., the classical (unweighted) Calder\'{o}n-Zygmund theory for double-phase problems.
\end{abstract}

\maketitle

\section{Introduction}


In this work we study the double phase problem involving a degenerate/singular matrix-valued weight whose prototype is 
\begin{multline}
\label{0000}
\dive\(|\M Du|^{p-2}\M^2 Du+a(x)|\M Du|^{q-2}\M^2 Du\)\\
=
\dive\(|\M F|^{p-2}\M^2 F+a(x)|\M F|^{q-2}\M^2 F\)
\quad\mbox{in $\Omega$,}
\end{multline}
where $\Omega$ is a bounded domain in $\R^n$ with $n\ge2$.
The numbers $p$, $q$ and the function $a:\Omega\rightarrow\R$ interplay
with each other under a basic structural assumption
that
\begin{align}\label{holder-a} 1<p<q<\infty, \quad 0\le
a(\cdot)\in C^{0,\alpha}(\Omega) \quad\mbox{with $\alpha\in(0,1]$.}
\end{align}
Here $F:\Omega\rightarrow\R^n$ is a given vector field and 
$\M:\ern\rightarrow\er^{n\times n}$ is a matrix-valued weight that is symmetric, almost everywhere positive definite, and satisfies the quantitative anisotropy bound
\begin{align}\label{cond-M}
|\mathbb{M}(x)||\mathbb{M}^{-1}(x)|\leq\Lambda
\end{align}
for almost all $x\in\R^n$ and some constant $\Lambda\ge1$,
where $\abs{\cdot}$ is the usual spectral norm of matrices. 
This condition implies that for any $\xi \in
\mathbb{R}^n$ there holds
\begin{align*}
\Lambda^{-1}\abs{\M}\abs{\xi} \le \abs{\M\xi} \le \abs{\M}\abs{\xi}.
\end{align*}
With $\mathbb{M}={\rm I}_n$, 
Zhikov introduced in \cite{Z1, Z2, MR1350506} the double-phase problem, motivated by the modeling of strongly anisotropic materials, elasticity, homogenization, and the Lavrentiev phenomenon.
In particular, the sharp
threshold
\begin{equation}
\label{lavrentiev}
\frac{q}{p} \leq 1+\frac{\alpha}{n}
\end{equation} 
is linked to the absence of the Lavrentiev phenomenon 
and serves as a necessary regularity condition for achieving optimal results 
such as gradient H\"{o}lder continuity and Calder\'{o}n-Zygmund estimates.
These aspects were rigorously developed in a series of foundational works \cite{BCM, BOh1, CM1, CM2, CM3, MR3985927, MR2076158}, which have significantly influenced subsequent research; see, for example, \cite{BB1, HO1, MR4897661, MR4627284, MR4926894, MR4693679, BM1, DM1, MR2058167, MR4361836, MR4258791}.

The purpose of this paper is to establish a Calder\'on-Zygmund type theory for the matrix-weighted double phase problems. 
More precisely, we prove that for every $\gamma>1$, the following implication
\begin{align*}
\big(|\M F|^p+a(x)\,|\M F|^q\big)\in L^\gamma_{\mathrm{loc}}
\ \Longrightarrow\
\big(|\M Du|^p+a(x)\,|\M Du|^q\big)\in L^\gamma_{\mathrm{loc}},
\end{align*}
holds with quantitative dependence on $\Lambda$ and $|\log \M|_{\mathrm{BMO}}$. 
When $\M={\rm{I}}_n$, this was proved in \cite{MR3985927,CM3}.
Especially, a notable contribution of the paper
\cite{MR3985927} is a detailed treatment of the borderline case in
particular concerning the absence of the Lavrentiev phenomenon in
the context of the Calder\'{o}n-Zygmund theory for nonuniformly
elliptic problems. Indeed the assertion was proved
under the assumption $\frac{q}{p}<1+\frac{\alpha}{n}$ in earlier
work \cite{CM3}. Based on the approach considered in \cite{CM3}, the
authors of \cite{MR3985927} successfully handled the borderline case
by employing an improved analytic technique of fractional estimates
developed in \cite{CM1}.
We generalize this framework by incorporating nontrivial matrix weights, thereby capturing a broader class of anisotropic behaviors.
The models with variable matrix weights have typically been studied in single phase problems $(a\equiv0)$,
where small $|\log \M|_{\mathrm{BMO}}$ or Muckenhoupt weights are used to obtain quantitative estimates in both linear and nonlinear cases, see, for instance, \cite{MR4410267,MR4629762,BCL,BY1,MR4926899,MR3900368}. 
Building on these developments, the present work bridges these lines of research by treating the full double phase structure with a variable matrix weight.

Two novel structural features arise in this work.
First, the $q$-phase with a H\"older-regular coefficient brings in a fractional maximal operator in weighted Lebesgue spaces, necessitating control via the fractional Muckenhoupt-Wheeden  $\mathcal{A}_{p,s}$ condition (see, e.g., \cite{MR293384,MR340523,MR930072,MR1247202,MR369641}).
Such a phenomenon is absent in single phase or identity-weighted problems. 
The classical Muckenhoupt class characterizes the boundedness of the Hardy–Littlewood maximal operator on weighted Lebesgue spaces, while the fractional Muckenhoupt-Wheeden class governs the boundedness of fractional maximal operators, which is naturally applied to the analysis of the $q$-phase in weighted double phase problems.
The class $\mathcal{A}_{p,s}$ with the parameter relation $1/s=1/p-\alpha/(nq)$ plays a crucial role in preventing the Lavrentiev phenomenon in our setting, as connected to the sharp threshold \eqref{lavrentiev}.
The second is small $\log$-BMO control of $\M$.
The smallness of $|\log \M|_{\mathrm{BMO}}$ enables a perturbative approach, allowing us to locally freeze the matrix weight and control deviations through logarithmic BMO estimates.
The method, previously adopted in single-phase problems, is here extended to the double phase problems.
We briefly outline this mechanism. 
We work locally and freeze $\M$ on each ball $B$ by its logarithmic mean $(\M)^{\log}_B$. 
The solution is then compared with four reference problems in a nested way:
the homogeneous problem with the original nonlinearity, 
one with the frozen matrix $(\M)^{\log}_B$,
one with the $x$-dependence of the nonlinearity removed by averaging over $B$, and 
one with a frozen height $a_0$ for the $q$-phase on a slightly smaller ball.
The transition from the variable matrix weight $\M$ to the frozen weight $(\M)^{\log}_B$ can be controlled by the smallness of $|\log\M|_{\BMO}$. 
Once the weight is frozen, the analysis proceeds along the lines of weight-free double phase problems,
but particular care is taken to ensure that the resulting estimates remain independent of the size of the constant matrix $(\M)^{\log}_B$.
Subsequently, a Vitali covering and exit-time argument on the super level sets of $H(x,\M Du)=|\M Du|^p+a(x)\,|\M Du|^q,$
together with pointwise control from the frozen problem and the smallness of $\M-(\M)^{\log}_B$, produce a good-$\lambda$ inequality. 
Iterating this inequality gives the $L^\gamma_{\mathrm{loc}}$-improvement for all $\gamma>1$.

This work provides the first comprehensive regularity theory for double phase problems with general matrix weights, bridging the gap between classical double phase problems and anisotropic models induced by variable matrices.
The two ingredients - the fractional $\mathcal{A}_{p,s}$ and the $\log$-BMO smallness of $\M$ - prevent the Lavrentiev gap and recover the classical results as the limit $\M\to {\rm I}_n$.

Our paper is organized as follows. Section \ref{sec2} recalls weighted background (including $\mathcal{A}_{p,s}$) and states the main result. Section \ref{sec3} treats the absence of the Lavrentiev phenomenon in the matrix-weighted setting and specifies admissible solutions. Section \ref{sec4} develops higher integrability for the reference problems (with frozen $a_0$ and frozen $(\M)_B^{\log}$). Section \ref{sec5} proves the main theorem by combining the comparison estimates with a covering/level-set iteration.

We close this introduction by noting that the analysis presented here naturally suggests several important directions for future research, including the extension of this matrix-weighted theory to the more general Musielak-Orlicz setting and the investigation of the boundary regularity.

\section{preliminaries and main results}\label{sec2}

In this section we present fundamental analytic tools to treat
matrix-valued weights, and then introduce regularity assumptions
regarding our degenerate/singular double phase problem to state the
main result.
\subsection{Weights and auxiliary lemmas}
We start with the notations to be used. We write
$B_r(x_0)\subset\R^n$ to denote the open ball with center
$x_0\in\R^n$ and radius $r>0$. We often omit specifying the
center point of a ball when it is clear in the context. For any
$p\in(1,\infty)$ we write $p'=\frac{p}{p-1}$.
We write
$(f)_B=\bint_B f \ dx$ for the integral average of a function $f$
over a ball $B$. Moreover $\mathbbm{1}_D$ denotes the indicator
function of a nonempty set $D\subset\Omega$. We write
$$
[a]_{C^{0,\alpha}(D)}=\sup_{\substack{x,y\in D \\ x\neq
y}}\frac{|a(x)-a(y)|}{|x-y|^\alpha}
$$
and $[a]_{0,\alpha} = [a]_{C^{0,\alpha}(\Omega)}$.
We also write $H(x,z)=|z|^p+a(x)|z|^q$.

We now briefly review some properties on weights. We say
that $\mu:\R^n\rightarrow[0,\infty)$ is a (scalar) weight if it is
almost everywhere positive. For any number $p\in(1,\infty)$ the
$\mathcal{A}_p$-Muckenhoupt class is the set of all weights $\mu\in
L^1_{\loc}(\R^n)$ for which
\begin{align}\label{201-a}
[\mu]_{\mathcal{A}_p}=
\sup_{x_0\in\R^n}\sup_{r>0}\bigg(\bint_{B_r(x_0)}\mu\,dx\bigg)
\bigg(\bint_{B_r(x_0)}\mu^{-\frac{1}{p-1}}\,dx\bigg)^{p-1} <\infty.
\end{align}
We denote by $\mathcal{A}_p(D)$ to mean this Muckenhoupt class
if the supremum in \eqref{201-a} is taken over all balls $B$ such
that $B\subset D$, where $D$ is a bounded domain in $\R^n$. A
typical example of a weight in $\cal{A}_p$ is the function
$\abs{x}^{\sigma}$ with $-n<\sigma<n(p-1)$. We recall here basic
properties for $\cal{A}_p$-class from \cite{MR1232192, MR293384,
Gra}.

\begin{lemma}
\label{lem:Ap-prop} Let $1<p<\infty$ be given. Then
\begin{enumerate}
\item If $\mu\in \mcA_p$, then $\mu^{1-p'}\in\mcA_{p'}$
and 
$[\mu^{1-p'}]_{\mcA_{p'}}=[\mu]_{\mcA_p}^{p'-1}$.
\item If $p<q<\infty$, then
$\mcA_p\subset\mcA_q$.
\item If $\mu\in\mcA_p$, then there holds for any ball $B\subset\R^n$
\begin{align*}
\bint_B\mu\,dx
\le
[\mu]_{\mcA_p}
\exp\bigg(\bint_B\log\mu\,dx\bigg).
\end{align*}

\item If $\mu\in \mcA_p$,
then there is a constant $c=c(n,p,[\mu]_{\cal{A}_p})>0$ such that
\begin{align*}
\frac{1}{c}\int_{D}|f|^p\mu\,dx
\le
\int_{D} (\cal{M}[\mathbbm{1}_Df])^p\mu\,dx
\le
c\int_{D} |f|^p\mu\,dx
\end{align*}
for any measurable function $f$ with $|f|^p\mu\in L^1(D)$,
where
\begin{align}\label{hardy-max}
\cal{M}[\mathbbm{1}_D f](x) =
\sup_{r>0}\bint_{B_r(x)}|\mathbbm{1}_Df(y)|\,dy.
\end{align}
\end{enumerate}
\end{lemma}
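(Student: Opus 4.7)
The plan is to dispatch the four items in turn, using only Hölder, Jensen, and one standard deep fact (Muckenhoupt's theorem) for item (4).

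For item (1), the key algebraic identity is $(p-1)(p'-1)=1$, so that $1-(p')' = 1-p = -(p-1)$, and hence $(\mu^{1-p'})^{1-(p')'} = \mu^{(1-p')(1-p)} = \mu$. Setting $\nu = \mu^{1-p'}$, I would write out
\[
[\nu]_{\mathcal{A}_{p'}} = \sup_{B} \Bigl(\bint_B \mu^{1-p'}\,dx\Bigr)\Bigl(\bint_B \mu\,dx\Bigr)^{p'-1},
\]
and compare with $[\mu]_{\mathcal{A}_p}^{p'-1} = \sup_B (\bint_B \mu\,dx)^{p'-1}(\bint_B \mu^{1-p'}\,dx)^{(p-1)(p'-1)}$, which gives the equality since $(p-1)(p'-1)=1$.

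For item (2), I take $p<q$ and observe that $s := (p-1)/(q-1) \in (0,1)$. Since $t \mapsto t^s$ is concave on $[0,\infty)$, Jensen's inequality applied to $\mu^{-1/(p-1)}$ gives
\[
\bint_B \mu^{-1/(q-1)}\,dx \;=\; \bint_B \bigl(\mu^{-1/(p-1)}\bigr)^{s}\,dx \;\le\; \Bigl(\bint_B \mu^{-1/(p-1)}\,dx\Bigr)^{s}.
\]
Raising to the $(q-1)$-th power and multiplying by $\bint_B\mu\,dx$ gives $[\mu]_{\mathcal{A}_q}\le [\mu]_{\mathcal{A}_p}$. Item (3) is in the same spirit: by Jensen applied to the concave $\log$, $\bint_B \log(\mu^{-1/(p-1)})\,dx \le \log \bint_B \mu^{-1/(p-1)}\,dx$, so that exponentiating yields $\bigl(\bint_B \mu^{-1/(p-1)}\,dx\bigr)^{-(p-1)} \le \exp\bigl(\bint_B \log\mu\,dx\bigr)$. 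Combining with the $\mathcal{A}_p$ condition rewritten as $\bint_B\mu\,dx \le [\mu]_{\mathcal{A}_p}\bigl(\bint_B\mu^{-1/(p-1)}\,dx\bigr)^{-(p-1)}$ yields the claim.

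For item (4), the pointwise bound $|f(x)| \le \mathcal{M}[\mathbbm{1}_D f](x)$ at every Lebesgue point of $f$ gives the lower inequality immediately after integrating against $\mu$. The upper inequality is the substantive part and is the main obstacle: it is Muckenhoupt's classical theorem on the boundedness of $\mathcal{M}$ on $L^p(\mathbb{R}^n,\mu)$ for $\mu\in\mathcal{A}_p$, whose standard proof combines a Calderón–Zygmund decomposition at level $\lambda$ with the $\mathcal{A}_p$ condition to control the distribution function of $\mathcal{M}f$ by that of $f$. Since the proof is lengthy but entirely classical and plays only a background role in this paper, I would state this inequality with a reference to \cite{MR1232192, Gra} rather than reproducing the argument. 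The overall lemma is thus obtained by combining elementary duality, Jensen, and one invocation of the weighted maximal theorem.
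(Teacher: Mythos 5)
Your proposal is correct, and it is worth noting that the paper itself supplies no proof for this lemma --- it is stated as a collection of standard facts with a pointer to \cite{MR1232192, MR293384, Gra}. Your argument is precisely the classical one found in those references: item (1) by the algebraic identity $(p-1)(p'-1)=1$ applied directly to the definition; items (2) and (3) by Jensen's inequality for the concave functions $t\mapsto t^{(p-1)/(q-1)}$ and $t\mapsto\log t$, respectively, applied to $\mu^{-1/(p-1)}$; and item (4) by the Lebesgue differentiation theorem for the lower bound and Muckenhoupt's maximal theorem for the upper bound. One small clarification on the lower bound in (4): the pointwise inequality $|g(x)|\le\mathcal{M}g(x)$ at Lebesgue points, applied to $g=\mathbbm{1}_D f$ and then restricted to $x\in D$, gives the left inequality with $c=1$, so no constant adjustment is actually needed there. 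Your choice to cite the weighted maximal theorem rather than reproduce its Calder\'on--Zygmund proof is the same choice the authors made for the entire lemma, so there is no divergence in approach --- only a difference in how much of the scaffolding you chose to write out.
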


We introduce the $\mathcal{A}_{p,s}$-class which is a
generalization of the classical Muckenhoupt $\cal{A}_{p}$, and plays
a crucial role to prove the absence of Lavrentiev phenomena in our
problem. We say that weight $\mu$ satisfies the
$\cal{A}_{p,s}$-condition for $p, s\in(1,\infty)$, denoted by
$\mu\in \mathcal{A}_{p,s}$, if
\begin{align}\label{frac-weight}
[\mu]_{\cal{A}_{p,s}} =
\sup_{x_0\in\R^n}\sup_{r>0}\bigg(\bint_{B_r(x_0)}\mu\,dx\bigg)
\bigg(\bint_{B_r(x_0)}\mu^{-p'/s}\,dx\bigg)^{s/p'}<\infty.
\end{align}
It follows that $\mu\in \mathcal{A}_{p,s}$ if and only if $\mu\in
\mathcal{A}_\sigma$ for $\sigma=1+\frac{s}{p'}$.

\begin{lemma}[\cite{MR340523}]\label{lem:Aps}
Let $\theta \in(0, n)$ be given and let $\mu\in L_{\loc}^1(\R^n)$ be a weight. If
$\mu^s \in \mathcal{A}_{p,s}$ for some $1<p<\frac{n}{\theta}$ and
the number $s = \frac{np}{n-\theta p}$, then there holds
\begin{align*}
\bigg(\int_{D}\(\cal{M}_\theta
[\mathbbm{1}_Df]\)^s\mu^s\,dx\bigg)^{\frac{1}{s}} \le
c\bigg(\int_{D}|f|^p\mu^p\,dx\bigg)^{\frac{1}{p}},
\end{align*}
for some constant $c$ independent of $f$ and for any measurable function
$f$ with $f\mu\in L^p(D)$, where
\begin{align}\label{frac-max}
\cal{M}_\theta [\mathbbm{1}_D f](x) =
\sup_{r>0}r^{\theta-n}\int_{B_r(x)}| \mathbbm{1}_D f(y)|\ dy.
\end{align}
\end{lemma}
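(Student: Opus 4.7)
The plan is to identify this as the classical two-weight bound for the fractional maximal operator due to Muckenhoupt and Wheeden \cite{MR340523}, and to outline how to recover it from tools already available in the paper. The argument falls naturally into three stages.

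\textbf{Stage 1 (reduction to the Riesz potential).} A dyadic annular decomposition of the ball $B_r(x)$ yields the elementary pointwise comparison
$$\mathcal{M}_\theta[\mathbbm{1}_D f](x) \le c_n\, I_\theta(\mathbbm{1}_D|f|)(x), \qquad I_\theta h(x) := \int_{\R^n}\frac{|h(y)|}{|x-y|^{n-\theta}}\,dy,$$
so it suffices to prove the two-weight estimate $\|I_\theta g\|_{L^s(\mu^s)} \le c\|g\|_{L^p(\mu^p)}$ for $g = \mathbbm{1}_D f$.

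\textbf{Stage 2 (weighted Hedberg pointwise bound).} For each $x\in\R^n$ and free parameter $r>0$ I split $I_\theta g(x)$ at $|x-y|=r$. The near part is dominated by $cr^\theta\mathcal{M} g(x)$ by a routine sum over dyadic shells. For the far part I apply H\"older's inequality with exponents $(p,p')$ after inserting the weight $\mu$, so that
$$\int_{|x-y|\ge r}\frac{|g(y)|}{|x-y|^{n-\theta}}\,dy \le \|g\mu\|_{L^p(\R^n)}\left(\int_{|x-y|\ge r}\frac{\mu(y)^{-p'}}{|x-y|^{(n-\theta)p'}}\,dy\right)^{1/p'}.$$
Decomposing the last integral into dyadic annuli $\{2^k r\le|x-y|<2^{k+1}r\}$ and invoking the hypothesis $\mu^s\in\mathcal{A}_{p,s}$, which after the substitution $\mu\to\mu^s$ in \eqref{frac-weight} furnishes control on $\bint_B\mu^{-p'}$ paired with $\bint_B\mu^s$, bounds each dyadic term by a geometric factor; the series converges precisely because $1/p - 1/s = \theta/n < 1/p$. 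Optimizing in $r$ produces a Hedberg-type estimate
$$I_\theta g(x) \le c\, [\mu^s]_{\mathcal{A}_{p,s}}^{\gamma}\, \bigl(\mathcal{M} g(x)\bigr)^{p/s}\, \|g\mu\|_{L^p(\R^n)}^{1-p/s}$$
for an exponent $\gamma = \gamma(n,p,\theta)$.

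\textbf{Stage 3 (integration).} Raising the previous bound to the $s$-th power and integrating against $\mu^s\,dx$, matters reduce to controlling $\int(\mathcal{M} g)^p\mu^s\,dx$ by a constant multiple of $\|g\mu\|_{L^p}^p$. The equivalence $\mu^s\in\mathcal{A}_{p,s}\iff\mu^s\in\mathcal{A}_{\sigma}$ with $\sigma = 1+s/p'$ noted after \eqref{frac-weight} puts $\mu^s$ in a Muckenhoupt class, so Lemma \ref{lem:Ap-prop}(4) provides boundedness of $\mathcal{M}$ on $L^\sigma(\R^n,\mu^s\,dx)$; a further H\"older step together with the relation $1/p-1/s=\theta/n$ matches exponents and converts the $\mu^s$ weight on the right-hand side to $\mu^p$, closing the estimate.

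The main obstacle is Stage 2: one must arrange the dyadic splitting of the exterior integral so that the weight combination emerging after H\"older's inequality pairs exactly with $\mu^{-p'}$ from the substituted form of \eqref{frac-weight}, and so that the Muckenhoupt--Wheeden balance $1/p-1/s=\theta/n$ appears naturally as the convergence condition for the geometric series in $k$. Once this bookkeeping is carried out, Stages 1 and 3 are standard applications of pointwise comparison and the Muckenhoupt maximal bound already recorded in Lemma \ref{lem:Ap-prop}(4).
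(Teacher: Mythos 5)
The paper does not prove this lemma; it is a direct citation of Muckenhoupt and Wheeden \cite{MR340523}. So the comparison is between your outline and the known proof strategies, and as written your outline has two concrete gaps.

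\textbf{Stage 2.} After H\"older and the dyadic decomposition, each annular piece of the far integral has the form
\[
(2^kr)^{-(n-\theta)p'}\int_{B_{2^{k+1}r}(x)}\mu^{-p'}\,dy
\;\lesssim\;
(2^kr)^{-(n-\theta)p'+n}\,[\mu^s]_{\mathcal{A}_{p,s}}^{p'/s}\Big(\bint_{B_{2^{k+1}r}(x)}\mu^s\,dy\Big)^{-p'/s},
\]
and the relation $\tfrac1p-\tfrac1s=\tfrac{\theta}{n}$ forces the exponent on $2^kr$ to be exactly $-np'/s$, so the radial factors cancel against $|B_{2^{k+1}r}|^{-p'/s}$ and the $k$-th term is comparable to $\big(\mu^s(B_{2^{k+1}r}(x))\big)^{-p'/s}$. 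This is \emph{not} a geometric series by the exponent bookkeeping you invoke; it converges only because $\mu^s\in\mathcal{A}_\sigma\subset\mathcal{A}_\infty$ satisfies a reverse doubling property, a fact you would need to state and use. More importantly, what comes out is a far-part bound of the shape $\|g\mu\|_{L^p}\big(\mu^s(B_{2r}(x))\big)^{-1/s}$, not the pure power $r^{\theta-n/p}\|g\mu\|_{L^p}$ that the unweighted Hedberg optimization requires. The near part is $r^\theta\mathcal{M}g(x)$; optimizing $r$ against a term whose $r$-dependence is the weight mass $\mu^s(B_{2r}(x))$ does not yield the pointwise inequality $I_\theta g(x)\lesssim(\mathcal{M}g(x))^{p/s}\|g\mu\|_{L^p}^{1-p/s}$ with the \emph{unweighted} Hardy--Littlewood maximal function. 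To push a Hedberg-type argument through in the weighted setting one has to replace $\mathcal{M}$ by a maximal operator adapted to the measure $\mu^s\,dx$, and then establish its mapping properties separately; that extra structure is missing from your outline.

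\textbf{Stage 3.} The equivalence after \eqref{frac-weight} gives $\mu^s\in\mathcal{A}_\sigma$ with $\sigma=1+s/p'$, and since $s>p$ one checks $\sigma>p$. Lemma~\ref{lem:Ap-prop}(4) therefore yields $\mathcal{M}\colon L^\sigma(\mu^s)\to L^\sigma(\mu^s)$, which is \emph{not} the bound $\int(\mathcal{M}g)^p\mu^s\,dx\lesssim\|g\mu\|_{L^p}^p$ your Stage 3 needs; you are at the exponent $\sigma\ne p$ and with the weight $\mu^s$ on both sides rather than $\mu^p$ on the right. The alleged ``further H\"older step'' converting $\mu^s$ to $\mu^p$ on the right-hand side is not a H\"older step; there is no legitimate way to trade $\int|g|^p\mu^s$ for $\int|g|^p\mu^p$ without additional hypotheses. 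So the proposed closure does not close.

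Two smaller remarks. First, the lemma only asserts the bound for the fractional \emph{maximal} operator $\mathcal{M}_\theta$, so the Stage 1 reduction to the Riesz potential $I_\theta$, while valid ($\mathcal{M}_\theta g\lesssim I_\theta|g|$), strictly strengthens the problem; proving $\mathcal{M}_\theta\colon L^p(\mu^p)\to L^s(\mu^s)$ admits a more direct argument (H\"older on each ball, the $\mathcal{A}_{p,s}$ condition, and a weighted maximal bound over the measure $\mu^s\,dx$), whereas the $I_\theta$ version requires more. Second, the original proof of Muckenhoupt and Wheeden is not a weighted Hedberg argument but proceeds via a weak-type estimate and the self-improvement of the $\mathcal{A}_{p,s}$ condition (reverse H\"older), so even once the above gaps are repaired your route would be a genuinely different one from the cited source.
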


We now recall the matrix weight $\M:\R^n \to \R^{n\times n}$ and
write $\omega=|\M|$ to mean the corresponding scalar-valued weight.
We introduce the weighted Lebesgue spaces related to the weight
$\omega$. Under the assumption $\omega\in L^p_{\loc}(\R^n)$ the
weighted Lebesgue space $L^p_{\omega^p}(\Omega)$ consists of all
measurable functions $f:\Omega\rightarrow\R$ such that $\omega f\in
L^p(\Omega)$ with its norm
$\norm{f}_{L^p_{\omega^p}(\Omega)}=\norm{f\omega}_{L^p(\Omega)}$.
The weighted Sobolev space $W^{1,p}_{\omega^p}(\Omega)$ is the one
consisting of all functions $f\in W^{1,1}(\Omega)$ such that
$f,\abs{Df}\in L^p_{\omega^p}(\Omega)$ with the norm
$\norm{f}_{W^p_{\omega^p}(\Omega)}=\norm{f}_{L^p_{\omega^p}(\Omega)}+\norm{\abs{Df}}_{L^p_{\omega^p}(\Omega)}$.
For the sake of simplicity and clarity we reformulate the
$\mathcal{A}_p$-Muckenhoupt class associated to the weight
$\omega^p$ as follow: $\omega^p\in\cal{A}_p$ if and only if
\begin{align*}
[\omega^p]_{\mathcal{A}_p}^{\frac{1}{p}}=
\sup_{x_0\in\R^n}\sup_{r>0}\bigg(\bint_{B_r(x_0)}\omega^p\,dx\bigg)^{\frac{1}{p}}
\bigg(\bint_{B_r(x_0)}\omega^{-p'}\,dx\bigg)^{\frac{1}{p'}}
<\infty.
\end{align*}

We next present a certain Sobolev-Poincar\'{e}'s inequality with
multiplicative weights.

\begin{lemma}[\cite{MR4410267,MR4629762}]
\label{lem:weight-poin} Let $1<p<\infty$ and $\theta \in(0,1]$ with
$\theta p\ge\max\left \{1,\frac{np}{n+p} \right\}$.
Setting $\omega=|\M|$, suppose \eqref{cond-M} and
\begin{align*}
\sup_{B'\subset B_{2r}}\bigg(\bint_{B'}\omega^p\,dx\bigg)^{\frac{1}{p}}
\bigg(\bint_{B'}\omega^{-(\theta p)'}\,dx\bigg)^{\frac{1}{(\theta p)'}}
\le
c_0
\end{align*}
for some $c_0>0$, where the supremum is taken over all balls $B'$ contained in $B_{2r}$.
Then we have
\begin{align*}
\bigg(\bint_{B_{r}}\bigg|\frac{\M (v-(v)_{B_{r}})}{r}\bigg|^p\,dx\bigg)^{\frac{1}{p}}
\le
c
\bigg(\bint_{B_{r}}|\M Dv|^{\theta p}\,dx\bigg)^{\frac{1}{\theta p}},
\end{align*}
where $c=c(c_0,n,p,\Lambda)>0$. 
Furthermore, 
if $v$ vanishes on
$\partial B_r\cap B_\rho(y)$,
where $B_\rho(y)$ is a ball such that $y\in B_r$, $\partial B_r\cap B_\rho(y)\neq\emptyset$ and
$|B_\rho(y) \setminus B_r|\ge \nu_0|B_\rho|$ for some constant $\nu_0>0$,
then we have
\begin{align*}
\bigg(\frac{1}{|B_\rho|}\int_{ B_\rho(y)\cap B_{r}}\bigg|\frac{\M
v}{\rho}\bigg|^p\,dx\bigg)^{\frac{1}{p}} \le c
\bigg(\frac{1}{|B_\rho|}\int_{B_\rho(y)\cap B_{r}}|\M Dv|^{\theta
p}\,dx\bigg)^{\frac{1}{\theta p}}
\end{align*}
for some constant $c=c(c_0,n,p,\Lambda,\nu_0)>0$.
\end{lemma}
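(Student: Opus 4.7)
The plan is to reduce the matrix-weighted inequality to its scalar counterpart via \eqref{cond-M}, and then to invoke a standard two-weight Sobolev--Poincar\'e inequality tailored to the two-weight $\mathcal{A}$-type condition assumed on $\omega$. By \eqref{cond-M} we have $\Lambda^{-1}\omega(x)|\xi|\le|\M(x)\xi|\le\omega(x)|\xi|$ for every $\xi\in\R^n$ and a.e.\ $x$, so both sides of the asserted estimate are comparable to their scalar counterparts with $\omega$ in place of $\M$. It therefore suffices to prove
\begin{equation*}
\bigg(\bint_{B_r}\bigg|\frac{\omega(v-(v)_{B_r})}{r}\bigg|^p dx\bigg)^{\frac{1}{p}}
\le c\bigg(\bint_{B_r}(\omega|Dv|)^{\theta p} dx\bigg)^{\frac{1}{\theta p}}
\end{equation*}
with $c$ depending only on $c_0$, $n$, $p$ and $\Lambda$.

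The natural starting point is the classical pointwise representation
\begin{equation*}
|v(x)-(v)_{B_r}|\le c(n)\int_{B_r}\frac{|Dv(y)|}{|x-y|^{n-1}}\,dy = c(n)\,\mathcal{I}_1\!\bigl(|Dv|\mathbbm{1}_{B_r}\bigr)(x),\qquad x\in B_r,
\end{equation*}
which converts matters to a two-weight estimate for the Riesz potential $\mathcal{I}_1$. The two-weight hypothesis on $\omega$ is precisely the local $\mathcal{A}$-class condition that, in the spirit of Lemma \ref{lem:Aps} applied to the fractional maximal majorant of $\mathcal{I}_1$, yields the requisite weighted boundedness. The constraint $\theta p\ge np/(n+p)$ guarantees that the Sobolev conjugate $(\theta p)^{*}=\tfrac{n\theta p}{n-\theta p}$ satisfies $(\theta p)^{*}\ge p$, so a final H\"older step trades down from an $L^{(\theta p)^{*}}$-target to the desired $L^p$-target; the volume mismatch $|B_r|^{1/\theta p - 1/(\theta p)^{*}}\sim r$ produces exactly the factor of $r$ that absorbs the $1/r$ on the left-hand side.

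For the second statement, I would extend $v$ by zero outside $B_r$, obtaining $\widetilde v\in W^{1,1}(B_\rho(y))$, and apply the interior estimate from the first part on $B_\rho(y)$ (whose validity uses that the two-weight condition is inherited by any sub-ball of $B_{2r}$). The density hypothesis $|B_\rho(y)\setminus B_r|\ge\nu_0|B_\rho|$ allows one to replace the mean value $(\widetilde v)_{B_\rho(y)}$ by $0$ at the cost of a factor depending on $\nu_0$: indeed, since $\widetilde v\equiv 0$ on a set of measure at least $\nu_0|B_\rho|$, a standard computation gives
\begin{equation*}
\bigl|(\widetilde v)_{B_\rho(y)}\bigr|\le c(\nu_0)\bint_{B_\rho(y)}\bigl|\widetilde v-(\widetilde v)_{B_\rho(y)}\bigr|\,dx,
\end{equation*}
and combining this with the interior inequality yields the claimed boundary-type bound.

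The delicate point is the precise matching between the two-weight hypothesis and the Muckenhoupt--Wheeden class: the assumption is formulated with an $L^p$ target, which is not the Sobolev conjugate of $L^{\theta p}$, so it is not immediately the exact Muckenhoupt--Wheeden exponent pairing for $\mathcal{I}_1$. Bridging this requires a self-improvement argument, namely that the hypothesis implies $\omega^p\in\mathcal{A}_\sigma$ with $\sigma=1+p/(\theta p)'<p$ whenever $\theta<1$, and the ensuing reverse H\"older property of $\omega^p$ provides the slack needed to interpolate between $L^p$ and $L^{(\theta p)^{*}}$ targets. For $\theta=1$ the inequality collapses to the classical $\mathcal{A}_p$-weighted Poincar\'e inequality of Fabes--Kenig--Serapioni and the subtlety disappears, which is why the extra work is concentrated in the range $\theta\in(n/(n+p),1)$.
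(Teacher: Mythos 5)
Note first that the paper does not supply a proof of Lemma~\ref{lem:weight-poin}; it is imported verbatim from \cite{MR4410267,MR4629762}. So the comparison is against the argument in those references, not against text in this paper.

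Your reduction to the scalar weight via \eqref{cond-M} is correct, as is the representation $|v(x)-(v)_{B_r}|\le c(n)\,\mathcal{I}_1(|Dv|\mathbbm{1}_{B_r})(x)$. The gap is in the way you feed the hypothesis into the Riesz-potential estimate. You propose to use Lemma~\ref{lem:Aps} on the ``balanced'' pair $(\theta p,(\theta p)^*)$, which would require $\omega^{(\theta p)^*}\in\mathcal{A}_{\theta p,(\theta p)^*}$ (locally). The assumed condition is $\omega^{p}\in\mathcal{A}_{\theta p,p}$, which is strictly weaker: both classes involve the same second factor $\big(\bint\omega^{-(\theta p)'}\big)^{1/(\theta p)'}$, but by Jensen $\big(\bint\omega^{p}\big)^{1/p}\le\big(\bint\omega^{(\theta p)^*}\big)^{1/(\theta p)^*}$ since $p\le(\theta p)^*$, so the hypothesis does not control the quantity you need. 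A reverse-H\"older self-improvement of the $\mathcal{A}_\sigma$ information you extract gives only an $\varepsilon$-bump in integrability and cannot close the fixed gap between $\omega^p$ and $\omega^{(\theta p)^*}$ in general. So the ``delicate point'' you flag is a real obstruction to the proof as written, and the fix you sketch does not resolve it.

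The correct assembly of the same ingredients is to tune the fractional order rather than the target exponent. Set
\begin{equation*}
\alpha:=n\Big(\frac{1}{\theta p}-\frac{1}{p}\Big)=\frac{n(1-\theta)}{\theta p},
\end{equation*}
which satisfies $\alpha\in[0,1]$ precisely because $\theta p\ge\frac{np}{n+p}$. A dyadic annular decomposition of the Riesz kernel on $B_{2r}(x)$ gives the pointwise bound
\begin{equation*}
\mathcal{I}_1(g\mathbbm{1}_{B_r})(x)\le c(n,\alpha)\,r^{1-\alpha}\,\mathcal{M}_\alpha(g\mathbbm{1}_{B_r})(x),\qquad x\in B_r,
\end{equation*}
valid for $\alpha\in[0,1)$; the case $\alpha=1$ is the endpoint $\theta p=\frac{np}{n+p}$, where $(\theta p)^*=p$ and the balanced Muckenhoupt--Wheeden estimate applies directly. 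With $p_0=\theta p$, $s_0=p$ and fractional order $\alpha$ one checks that $s_0=\frac{n p_0}{\,n-\alpha p_0\,}$ holds exactly, so Lemma~\ref{lem:Aps} applies with the assumed condition $\omega^{p}\in\mathcal{A}_{\theta p,p}$ and yields
\begin{equation*}
\Big(\int_{B_r}\big[\mathcal{M}_\alpha(|Dv|\mathbbm{1}_{B_r})\,\omega\big]^{p}\,dx\Big)^{1/p}\le c\,\Big(\int_{B_r}\big(|Dv|\omega\big)^{\theta p}\,dx\Big)^{1/(\theta p)}.
\end{equation*}
Normalising to averages introduces the factor $|B_r|^{1/(\theta p)-1/p}\sim r^{\alpha}$, which combines with the $r^{1-\alpha}$ from the pointwise bound to produce exactly the single factor of $r$ that cancels the $1/r$ on the left, with no loose H\"older step and no self-improvement needed. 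This is the step your proposal is missing.

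Your sketch of the boundary version is essentially sound: after extending $v$ by zero across $\partial B_r\cap B_\rho(y)$ (which yields a genuine $W^{1,1}$-function on $B_\rho(y)$ thanks to the assumed vanishing of the trace), the density hypothesis $|B_\rho(y)\setminus B_r|\ge\nu_0|B_\rho|$ lets one absorb the mean value via the standard estimate you quote. Just make sure the interior inequality you invoke on $B_\rho(y)$ is the corrected one above, so that the constant remains $c(c_0,n,p,\Lambda,\nu_0)$.
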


Using Lemma~\ref{lem:weight-poin} we can obtain the following weighted norm estimate.

\begin{lemma}
\label{lem:Hsobolev} 
Let $0<r\le1$, $1<p<q<\infty$ and
$0<a(\cdot)\in C^{0,\alpha}(B_r)$ with \eqref{lavrentiev}.
Assume \eqref{cond-M} and set $\omega=|\M|$.
Then there are constants $s_o\in(p',\infty)$ and $\theta_o\in(0,1)$, depending only on $n,p$ and $q$,
such that if
\begin{align}\label{cond-M-xxx}
\sup_{B'\subset B_{2r}}\bigg(\bint_{B'}\omega^q\,dx\bigg)^{\frac{1}{q}}
\bigg(\bint_{B'}\omega^{-s_o}\,dx\bigg)^{\frac{1}{s_o}}
\le
c_1
\end{align}
for some $c_1>0$, where the supremum is taken over all balls $B'$ contained in $B_{2r}$,
then we have
\begin{align}\label{Hsobolev-1}
\bint_{B_r}H\bigg(x,\frac{\M( v- (v)_{B_r})}{r}\bigg)\,dx
\le 
c\(1+\|\M Dv\|_{L^{p}(B_r)}^{q-p}\)\bigg(\bint_{B_r} [H(x,\M Dv)]^{\theta_o}\,dx\bigg)^{\frac{1}{\theta_o}}
\end{align}
for some $c=c(c_1, n,p,q,\Lambda,[a]_{C^{0,\alpha}(B_r)})>0$.
Furthermore, if $v$ vanishes on $\partial B_r\cap B_\rho(y)$,
where $B_\rho(y)$ is a ball such that $y\in B_r$, $\partial B_r\cap B_\rho(y)\neq\emptyset$ and
$|B_\rho(y) \setminus B_r|\ge \nu_0|B_\rho|$ for some constant $\nu_0>0$,
then we have
\begin{align}\label{Hsobolev-2}
\frac{1}{|B_\rho|}\int_{B_\rho(y)\cap B_r}\hspace{-0.2cm}
H\(x,\frac{\M v}{\rho}\)\,dx
\le 
c  \(\hspace{-0.05cm}1+\|\M Dv\|_{L^{p}(B_r)}^{q-p}\hspace{-0.05cm}\)\hspace{-0.05cm}
\bigg(\frac{1}{|B_\rho|}\int_{B_\rho(y)\cap B_r}\hspace{-0.2cm}
 [H(x,\M Dv)]^{\theta_o}\,dx\bigg)^{\frac{1}{\theta_o}}
\end{align}
for some constant $c=c(c_1, n,p,q,\Lambda,[a]_{C^{0,\alpha}(B_r)},\nu_0)>0$.
\end{lemma}
\begin{proof}
As the proof of \eqref{Hsobolev-2} proceeds in the same manner to that of \eqref{Hsobolev-1}, 
we shall only prove \eqref{Hsobolev-1}.
To estimate the $q$-phase term, we divide the analysis into two cases based on the behavior of the modulating coefficient $a$, 
adopting the method used in the proof of \cite[Lemma~3.1]{MR4361836}.

We first assume that $\sup_{B_r}a(\cdot)> 4[a]_{0,\alpha}r^\alpha$.
Then there is a point $y\in B_r$ such that
\begin{align*}
a(y)\ge4[a]_{0,\alpha} r^{\alpha},
\end{align*} 
which follows that
\begin{align}\label{xx-209}
|a(x)-a(y)|\le 2 [a]_{0,\alpha}r^\alpha
\le\tfrac{1}{2}a(y)
\quad\mbox{for all $x\in B_r$}.
\end{align}
This implies
\begin{align*}
a(y)\le |a(x)-a(y)|+a(x)
\le \tfrac{1}{2}a(y)+a(x).
\end{align*}
Absorbing $\frac{1}{2}a(y)$ from the right-hand side into the left-hand side, we obtain
\begin{align}\label{xx-210}
a(x)\ge\tfrac{1}{2}a(y)
\quad\mbox{for all $x\in B_r$}.
\end{align}
It follows from \eqref{xx-209} that 
\begin{align}\label{xx-211}
a(x)\le |a(x)-a(y)|+a(y)\le 2a(y)
\quad\mbox{for all $x\in B_r$}.
\end{align}
Let us choose $\theta\in(0,1)$ such that
\begin{align*}
\max\left\{1,\frac{np}{n+p}\right\}<\theta p.
\end{align*}
We have
$\max\left\{1,\frac{nq}{n+q}\right\}<\theta q$,
and hence we apply \eqref{cond-M-xxx} and Lemma~\ref{lem:weight-poin}, together with \eqref{xx-210} and \eqref{xx-211}, to deduce
\begin{align}\label{xxx-213-a}
\bint_{B_r}a(x)\frac{|\M(v-(v)_{B_r})|^q}{r^q}\,dx
&\le
ca(y)
\bigg(\bint_{B_r}|\M Dv|^{\theta q }\,dx\bigg)^{\frac{1}{\theta}}\nonumber\\
&\le
c
\bigg(\bint_{B_r}\[(a(x)|\M Dv|^{q }\]^{\theta}\,dx\bigg)^{\frac{1}{\theta}}
\end{align}
for some $c=c(c_1, n,q,\Lambda)>0$, provided $s_o\ge(\theta q)'$.
Similarly, for $p$-phase term when $s_o\ge(\theta p)'$, we have
\begin{align}\label{xxx-213}
\bint_{B_r}\frac{|\M(v-(v)_{B_r})|^p}{r^p}\,dx
&\le
c\bigg(\bint_{B_r}|\M Dv|^{\theta p }\,dx\bigg)^{\frac{1}{\theta}}
\end{align}
for some $c=c(c_1, n,p,\Lambda)>0$.
Combining \eqref{xxx-213-a} and \eqref{xxx-213} yields  
\begin{align}\label{xxx-217-b}
\bint_{B_r}H\(x,\frac{|\M(v-(v)_{B_r})|}{r}\)\,dx
&\le
c\bigg(\bint_{B_r}[H(x,\M Dv)]^{\theta}\,dx\bigg)^{\frac{1}{\theta}},
\end{align}
whenever $s_o\ge(\theta p)'>(\theta q)'$.

We next assume that $\sup_{B_r}a(\cdot)\le 4[a]_{0,\alpha}r^\alpha$.
Observing  that $\max\{1,\frac{nq}{n+q}\}< p$ from \eqref{lavrentiev},
we choose $\tilde{\theta}\in(0,1)$ such that
\begin{align*}
\max\left\{1,\frac{nq}{n+q}\right\}<\tilde{\theta} q<p.
\end{align*}
We employ \eqref{cond-M-xxx}, Lemma~\ref{lem:weight-poin} and H\"older's inequality,
to have
\begin{align}\label{xxx-207}
\bint_{B_r}a(x)\frac{|\M(v-(v)_{B_r})|^q}{r^q}\,dx
&\le
4[a]_{0,\alpha}r^\alpha
\bigg(\bint_{B_r}|\M Dv|^{\tilde{\theta}q }\,dx\bigg)^{\frac{1}{\tilde{\theta}}}\nonumber\\
&\le
cr^\alpha
\bigg(\bint_{B_r}|\M Dv|^{\tilde{\theta} q}\,dx\bigg)^{\frac{q-p}{\tilde{\theta} q}}
\bigg(\bint_{B_r}|\M Dv|^{\tilde{\theta} q}\,dx\bigg)^{\frac{p}{\tilde{\theta} q}}\nonumber\\
&\le
cr^\alpha
\bigg(\bint_{B_r}|\M Dv|^{p}\,dx\bigg)^{\frac{q-p}{p}}
\bigg(\bint_{B_r}|\M Dv|^{\tilde{\theta} q}\,dx\bigg)^{\frac{p}{\tilde{\theta} q}}\nonumber\\
&\le
cr^{\alpha-\frac{(q-p)n}{p}}
\|\M Dv\|_{L^p(B_r)}^{q-p}
\bigg(\bint_{B_r}|\M Dv|^{\tilde{\theta} q}\,dx\bigg)^{\frac{p}{\tilde{\theta} q}}\nonumber\\
&\le
c
\|\M Dv\|_{L^p(B_r)}^{p-q}
\bigg(\bint_{B_r}|\M Dv|^{\tilde{\theta} q}\,dx\bigg)^{\frac{p}{\tilde{\theta} q}}
\end{align}
for some $c=c(c_1,n,p,q,\Lambda,[a]_{C^{0,\alpha}(B_r)})>0$,
provided $s_o\ge(\tilde{\theta} q)'$.
In the last inequality, we used $r\le1$ and \eqref{lavrentiev}.
It follows from H\"older's inequality and Lemma~\ref{lem:weight-poin} that
\begin{align*}
\bint_{B_r}\frac{|\M(v-(v)_{B_r})|^p}{r^p}\,dx
&\le
\(\bint_{B_r}\frac{|\M(v-(v)_{B_r})|^q}{r^q}\,dx\)^{\frac{p}{q}}\nonumber\\
&\le
c\bigg(\bint_{B_r}|\M Dv|^{\tilde{\theta} q}\,dx\bigg)^{\frac{p}{\tilde{\theta} q}}.
\end{align*}
for some $c=c(c_1,n,p,q,\Lambda)>0$.
Combining this and \eqref{xxx-207} we obtain
\begin{align}\label{220-xxx}
\bint_{B_r}H\(x,\frac{|\M(v-(v)_{B_r})|}{r}\)\,dx
&\le
c\(1+\|\M Dv\|_{L^p(B_r)}^{p-q}\)
\bigg(\bint_{B_r}|\M Dv|^{\tilde{\theta} q}\,dx\bigg)^{\frac{p}{\tilde{\theta} q}}\nonumber\\
&\le
c\(1+\|\M Dv\|_{L^p(B_r)}^{p-q}\)
\bigg(\bint_{B_r}[H(x,\M Dv)]^{\frac{\tilde{\theta}{q}}{p}}\,dx\bigg)^{\frac{p}{\tilde{\theta} q}},
\end{align}
for some $c=c(c_1,n,p,q,\Lambda,[a]_{C^{0,\alpha}(B_r)})>0$, whenever $s_o\ge(\tilde{\theta} q)'$.

In both cases, we select
$$
s_o=\max\{(\theta p)',(\tilde{\theta} q)'\}\in(p',\infty) \quad\mbox{and}\quad
\theta_o=\max\left\{\theta,\tfrac{\tilde{\theta}q}{p}\right\}\in(0,1).
$$
Here, we remark that $s_o<\infty$, since $(\theta p)', (\tilde{\theta} q)'<\infty$ from $\theta p,\tilde{\theta}q>1$.
Therefore we complete the proof by using H\"older's inequality in \eqref{xxx-217-b} and \eqref{220-xxx}.
\end{proof}

We now present a main regularity assumption on the matrix weight
$\mathbb{M}$. We start with $\BMO$ space which is the set of all
functions $f\in L^1_{\loc}(\R^n)$ for which
\begin{align}\label{215-c}
|f|_{\BMO}= \sup_{x_0\in\R^n}\sup_{r>0}
\bigg(\bint_{B_r(x_0)}|f(x)-(f)_{B}|\,dx\bigg) <\infty.
\end{align}
We write $|f|_{\BMO(D)}$ when the supremum in \eqref{215-c} is taken
over all balls $B$ such that $B\subset D$, where $D$ is a bounded
domain in $\R^n$. We next consider the matrix exponential
$\exp:\R^{n\times n}_{\rm{sym}}\rightarrow\R^{n\times n}_{>0}$ and
its inverse $\log:\R^{n\times n}_{>0}\rightarrow\R^{n\times
n}_{\rm{sym}}$, where $\R^{n\times n}_{\rm{sym}}$ and $\R^{n\times
n}_{>0}$ mean the sets of symmetric real-valued matrices and
symmetric positive definite real-valued matrices, respectively. 
Then we can define the logarithmic means of 
the matrix-valued weight $\M$ and its scalar weight
$\omega=|\mathbb{M}|$, denoted by
\begin{align*}
\quad \(\M\)_B^{\log}=\exp\(\bint_B\log\M(x)\,dx\)
\quad\mbox{and}\quad\(\omega\)_B^{\log}=\exp\(\bint_B\log\omega(x)\,dx\),
\end{align*}
since $\M$ is almost everywhere positive definite.
Also, observe the properties
\begin{align}\label{ovM}
\begin{cases}
\Lambda^{-1}(\omega)_B^{\log}|\xi|
\le
|(\M)_B^{\log}\xi|
\le
(\omega)_B^{\log}|\xi|
\quad \mbox{for all $\xi\in\R^n$,} \\
\Lambda^{-1}(\omega)_B^{\log}
\le
|(\M)_B^{\log}|
\le
(\omega)_B^{\log}
\end{cases}
\end{align}
under the assumption \eqref{cond-M}, see \cite{MR4410267}.

In what follows we give useful estimates concerning weights $\M$ and 
$\omega$.

\begin{lemma}
[\cite{MR4410267}] \label{lem:matrix-scalar}
We have
\begin{align*}
\bint_{B}|\log\omega(x)-(\log\omega)_B|\,dx
\le
2\bint_{B}|\log\M(x)-(\log\M)_{B}|\,dx
\end{align*}
and hence,
\begin{align*}
|\log\omega|_{\BMO(B)}\le 2|\log\M|_{\BMO(B)}.
\end{align*}
\end{lemma}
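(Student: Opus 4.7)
The plan is to reduce the assertion to the scalar BMO ``two-constant trick'' together with the Lipschitz continuity of the largest eigenvalue on symmetric matrices. Recall the standard fact that for any $f\in L^1(B)$ and any constant $c$,
\begin{align*}
\bint_B|f-(f)_B|\,dx\le 2\bint_B|f-c|\,dx.
\end{align*}
I will apply this with $f=\log\omega$ and a suitably chosen $c$ that comes from the matrix integral average of $\log\M$.

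The key algebraic point is that, since $\M(x)$ is symmetric and positive definite, $\log\M(x)$ is symmetric and its largest eigenvalue equals $\log\lambda_{\max}(\M(x))=\log|\M(x)|=\log\omega(x)$. Thus, setting $N:=(\log\M)_B=\bint_B\log\M(x)\,dx$ (a symmetric matrix since $\log\M(x)$ is symmetric) and $c:=\lambda_{\max}(N)$, Weyl's perturbation inequality for eigenvalues of symmetric matrices gives
\begin{align*}
|\log\omega(x)-c|
=|\lambda_{\max}(\log\M(x))-\lambda_{\max}(N)|
\le|\log\M(x)-(\log\M)_B|,
\end{align*}
where on the right-hand side $|\cdot|$ denotes the spectral norm.

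Combining the two displays and averaging over $B$ yields
\begin{align*}
\bint_B|\log\omega-(\log\omega)_B|\,dx
\le 2\bint_B|\log\M(x)-(\log\M)_B|\,dx,
\end{align*}
which is the first estimate. The second is immediate by taking the supremum over all balls $B\subset D$ in the definition \eqref{215-c}.

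The only ``non-routine'' step is the identification $\log|\M|=\lambda_{\max}(\log\M)$ and the Lipschitz bound on $\lambda_{\max}$ via Weyl's inequality; both are elementary consequences of the symmetry and positive definiteness of $\M$ together with the fact that the spectral norm of a symmetric matrix equals the maximum of the absolute values of its eigenvalues. I expect no real obstacle beyond making these two observations explicit.
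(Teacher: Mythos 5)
Your proof is correct, and it is essentially the argument used in the cited reference~\cite{MR4410267} (the paper itself states this lemma without proof, citing that work). The three ingredients—the two-constant trick $\bint_B|f-(f)_B|\,dx\le 2\bint_B|f-c|\,dx$, the identity $\log\omega=\log\lambda_{\max}(\M)=\lambda_{\max}(\log\M)$ for symmetric positive definite $\M$, and the Lipschitz bound $|\lambda_{\max}(A)-\lambda_{\max}(N)|\le|A-N|$ from Weyl's inequality for symmetric matrices—fit together exactly as you describe, and the choice $c=\lambda_{\max}\bigl((\log\M)_B\bigr)$ (a genuine constant since $(\log\M)_B$ is a single symmetric matrix) closes the argument cleanly. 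One small point worth making explicit in a write-up: $\log\omega$ is the \emph{largest eigenvalue} of $\log\M$, not its spectral norm, since $\lambda_{\min}(\M)<1$ would make some eigenvalues of $\log\M$ negative; you use the eigenvalue, not the norm, which is the right object, so the step is sound.
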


\begin{lemma}
[\cite{MR4410267}] \label{lem:m-m} 
There are positive constants $\kappa$ and $c_b$, depending on $n$, such that if
we have
$|\log\M|_{\BMO(B)}\le\frac{\kappa}{s}$ for each $s\ge1$, then
\begin{align*}
\left(\bint_{B}\bigg(\frac{|\M(x)-\(\M\)_B^{\log}|}{|\(\M\)_B^{\log}|}\bigg)^s\,dx\right)^{\frac{1}{s}}
\le
c_b s|\log\M|_{\BMO(B)}.
\end{align*}
This holds with $\omega$ instead of $\M$.
\end{lemma}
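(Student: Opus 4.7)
The plan is to reduce the matrix--weighted deviation to a scalar exponential moment of $L := \log\M$ and then invoke the John--Nirenberg inequality. Set $L_B := (\log\M)_B = \bint_B L\,dx$ and $N(x) := L(x) - L_B$; both are symmetric since $L$ is. I first use Duhamel's formula
\[
\exp(L_B + N(x)) - \exp(L_B) = \int_0^1 \exp(s(L_B+N(x)))\,N(x)\,\exp((1-s)L_B)\,ds
\]
to rewrite $\M(x) - (\M)_B^{\log}$. For symmetric matrices the spectral norm satisfies $|\exp(tA)| = \exp(t\,\lambda_{\max}(A))$ for $t\ge 0$, and Weyl's inequality gives $\lambda_{\max}(L_B + N) \le \lambda_{\max}(L_B) + |N|$. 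Multiplying the two exponential factors inside the integrand, the contributions of $\lambda_{\max}(L_B)$ at the powers $s$ and $1-s$ combine into a single $|(\M)_B^{\log}|$ and leave behind only $e^{s|N|}$. Integrating in $s$ then yields the key pointwise reduction
\[
\frac{|\M(x) - (\M)_B^{\log}|}{|(\M)_B^{\log}|} \;\le\; e^{|L(x) - L_B|} - 1.
\]

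With this in hand, the task becomes to estimate $\bigl(\bint_B (e^{|N|}-1)^s\,dx\bigr)^{1/s}$. I will use the elementary inequality $e^t-1 \le t\,e^t$ for $t \ge 0$ together with Cauchy--Schwarz:
\[
\bint_B (e^{|N|} - 1)^s\,dx \;\le\; \bint_B |N|^s e^{s|N|}\,dx \;\le\; \Bigl(\bint_B |N|^{2s}\,dx\Bigr)^{1/2}\,\Bigl(\bint_B e^{2s|N|}\,dx\Bigr)^{1/2}.
\]
Applying the John--Nirenberg inequality to the matrix BMO function $L$ (entrywise, together with $|N| \le c(n)\max_{ij}|N_{ij}|$), and fixing $\kappa = \kappa(n)$ small enough with respect to the John--Nirenberg constant so that $|\log\M|_{\BMO(B)} \le \kappa/s$ implies the hypothesis of its exponential integrability version, I obtain the two ingredients: the moment bound $\bigl(\bint_B |N|^{2s}\,dx\bigr)^{1/(2s)} \le c(n)\,s\,|\log\M|_{\BMO(B)}$ and the exponential bound $\bint_B e^{2s|N|}\,dx \le 2$. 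Multiplying and taking the $s$-th root then yields the stated estimate with $c_b = c_b(n)$.

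The scalar statement is strictly easier: commutativity reduces the pointwise step to the exact identity
\[
\frac{\omega(x) - (\omega)_B^{\log}}{(\omega)_B^{\log}} = \exp\bigl(\log\omega(x) - (\log\omega)_B\bigr) - 1,
\]
and the same Cauchy--Schwarz/John--Nirenberg argument closes the estimate, with Lemma~\ref{lem:matrix-scalar} transferring the BMO smallness from $\M$ to $\omega$ (at the cost of a harmless factor $2$ in $\kappa$). The principal technical obstacle is the first step: controlling $|\exp(L_B+N) - \exp(L_B)|$ by the commutative--looking quantity $|(\M)_B^{\log}|\,(e^{|N|}-1)$ despite $L_B$ and $N$ not commuting. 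The resolution rests on the Duhamel integral representation combined with the spectral identity $|\exp(tA)|=|\exp(A)|^t$ for symmetric $A$ and Weyl's inequality, both of which rely crucially on the symmetry of $\log\M$ built into the standing assumption that $\M$ is symmetric and positive definite.
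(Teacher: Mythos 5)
Your proof is correct. Note first that the paper does not prove this lemma — it is stated with a citation to \cite{MR4410267} — so there is no in-paper argument to compare against. Your route is nevertheless a sound and clean one, and worth recording the key reduction carefully: writing $L_B=(\log\M)_B$ and $N=\log\M-L_B$, Duhamel's formula gives
\begin{equation*}
e^{L_B+N}-e^{L_B}=\int_0^1 e^{s(L_B+N)}\,N\,e^{(1-s)L_B}\,ds,
\end{equation*}
and since $L_B$, $N$, $L_B+N$ are symmetric and $s,1-s\ge0$ one has $|e^{s(L_B+N)}|=e^{s\lambda_{\max}(L_B+N)}\le e^{s\lambda_{\max}(L_B)}e^{s|N|}$ by Weyl's inequality, while $|e^{(1-s)L_B}|=e^{(1-s)\lambda_{\max}(L_B)}$. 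The $\lambda_{\max}(L_B)$ contributions at powers $s$ and $1-s$ recombine to give exactly $|e^{L_B}|=|(\M)_B^{\log}|$, and integrating $|N|e^{s|N|}$ over $s\in[0,1]$ gives $e^{|N|}-1$, i.e.\ the pointwise bound $|\M(x)-(\M)_B^{\log}|\,/\,|(\M)_B^{\log}|\le e^{|N(x)|}-1$. This is the crux: it correctly neutralizes the non-commutativity of $L_B$ and $N$ without any loss in the $e^{L_B}$ factor. From there, the estimate $e^t-1\le te^t$, Cauchy--Schwarz, and the scalar John--Nirenberg inequality (applied entrywise with $|N|\le c(n)\max_{ij}|N_{ij}|$) close the argument: the moment bound $\bigl(\bint_B|N|^{2s}\,dx\bigr)^{1/(2s)}\le c(n)\,s\,|\log\M|_{\BMO(B)}$ follows from the exponential distributional estimate and Stirling, the exponential moment $\bint_B e^{2s|N|}\,dx\le 2$ holds once $\kappa$ is small relative to the John--Nirenberg constant, and the residual factor $2^{1/(2s)}\le\sqrt2$ for $s\ge1$ keeps $c_b$ depending only on $n$. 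The scalar case is a strict simplification, where the Duhamel/Weyl step collapses to the commutative identity, and if one wishes to run it under the hypothesis on $|\log\M|_{\BMO(B)}$ rather than $|\log\omega|_{\BMO(B)}$, Lemma~\ref{lem:matrix-scalar} transfers the smallness with a harmless factor $2$.
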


\begin{lemma}
[\cite{MR4410267}] \label{lem:w-prop} Let $\kappa$ and $c_b$ be the
numbers given in Lemma \ref{lem:m-m}. Then one can find a constant
$\beta=\beta(n)=\min\left\{\kappa, \frac{1}{c_b} \right\}$ such that
the followings hold.
\begin{enumerate}
\item If $|\log\omega|_{\BMO(B)}\le\frac{\beta}{s}$ for each $s\ge1$,
then
\begin{align*}
\bigg(\bint_B\omega^s\,dx\bigg)^{\frac{1}{s}}
\le
2\(\omega\)_B^{\log}
\quad\mbox{and}\quad
\bigg(\bint_B\omega^{-s}\,dx\bigg)^{\frac{1}{s}}
\le
\frac{2}{\(\omega\)_B^{\log}}.
\end{align*}

\item 
If $|\log\omega|_{\BMO(B)}\le\beta\min \left\{\frac{1}{p},
\frac{1}{(\theta p)'} \right\}$ for each $p \in (1, \infty)$ and
$\theta\in(0,1]$ with $\theta p>1$, then
$\omega^p\in\cal{A}_{\theta p, p}(B)$ and
\begin{align*}
\sup_{B'\subset B}
\bigg(\bint_{B'}\omega^p\,dx\bigg)^{\frac{1}{p}}
\bigg(\bint_{B'}\omega^{-(\theta p)'}\,dx\bigg)^{\frac{1}{(\theta
p)'}}  \leq 4,
\end{align*}
where the supremum is taken over all balls $B'$ contained in $B$.
\end{enumerate}
\end{lemma}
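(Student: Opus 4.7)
The plan is to extract both parts from the scalar version of Lemma \ref{lem:m-m} applied to $\omega$ and to $\omega^{-1}$, together with elementary manipulations.

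First I would prove part (1). Apply the scalar version of Lemma \ref{lem:m-m} to $\omega$ on $B$: since $\beta\le\kappa$, the hypothesis $|\log\omega|_{\BMO(B)}\le\beta/s\le\kappa/s$ gives
\begin{align*}
\left(\bint_{B}\left|\omega(x)-(\omega)_B^{\log}\right|^s\,dx\right)^{1/s}
\le c_b s\,|\log\omega|_{\BMO(B)}\,(\omega)_B^{\log}
\le c_b\beta\,(\omega)_B^{\log}
\le (\omega)_B^{\log},
\end{align*}
where in the last step I use $\beta\le 1/c_b$. Minkowski's inequality then yields
\begin{align*}
\Bigl(\bint_B \omega^s\,dx\Bigr)^{1/s}
\le (\omega)_B^{\log}+\Bigl(\bint_B\bigl|\omega-(\omega)_B^{\log}\bigr|^s\,dx\Bigr)^{1/s}
\le 2(\omega)_B^{\log},
\end{align*}
which is the first bound. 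For the reciprocal bound I would repeat the same argument with $\omega^{-1}$ in place of $\omega$; the point is that $\log(\omega^{-1})=-\log\omega$, hence $|\log(\omega^{-1})|_{\BMO(B)}=|\log\omega|_{\BMO(B)}\le\beta/s$, while $(\omega^{-1})_B^{\log}=\exp\bigl(-\bint_B\log\omega\bigr)=1/(\omega)_B^{\log}$. Applying the previous argument produces
\begin{align*}
\Bigl(\bint_B \omega^{-s}\,dx\Bigr)^{1/s}\le 2(\omega^{-1})_B^{\log}=\frac{2}{(\omega)_B^{\log}}.
\end{align*}

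For part (2), observe that the $\BMO$-seminorm is monotone in the domain: if $B'\subset B$, then any ball contained in $B'$ is contained in $B$, so $|\log\omega|_{\BMO(B')}\le|\log\omega|_{\BMO(B)}$. Hence the hypothesis of part (1) holds on every sub-ball $B'\subset B$, with the choices $s=p$ and $s=(\theta p)'$ respectively, because $\beta\min\{1/p,1/(\theta p)'\}\le\beta/p$ and $\le\beta/(\theta p)'$. Applying the two bounds from part (1) on $B'$ and multiplying gives
\begin{align*}
\Bigl(\bint_{B'}\omega^{p}\,dx\Bigr)^{1/p}\Bigl(\bint_{B'}\omega^{-(\theta p)'}\,dx\Bigr)^{1/(\theta p)'}
\le 2(\omega)_{B'}^{\log}\cdot\frac{2}{(\omega)_{B'}^{\log}}=4,
\end{align*}
uniformly in $B'\subset B$. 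Finally, raising this estimate to the power $p$ and comparing with the definition \eqref{frac-weight} of $[\,\cdot\,]_{\mathcal{A}_{\theta p,p}}$ (identifying $\mu=\omega^p$, so that $\mu^{-p'/s}=\omega^{-(\theta p)'}$ when the outer parameter is $\theta p$ and the inner exponent is $p$) yields $\omega^p\in\mathcal{A}_{\theta p,p}(B)$ with $[\omega^p]_{\mathcal{A}_{\theta p,p}(B)}\le 4^p$.

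There is no real obstacle here; the proof is a direct consequence of the John--Nirenberg-type control from Lemma \ref{lem:m-m} once one notices two things, namely that reciprocating $\omega$ leaves $|\log\omega|_{\BMO}$ invariant while inverting its logarithmic mean, and that the $\BMO$-seminorm over subdomains is monotone so the estimate of part (1) propagates uniformly to every $B'\subset B$. The one small point to be careful about is the calibration $\beta=\min\{\kappa,1/c_b\}$ in the first step: it is exactly what is needed so that $c_b\beta\le 1$ absorbs the constant coming from Lemma \ref{lem:m-m}.
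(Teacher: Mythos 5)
Your proof is correct, and since the paper only cites this lemma from \cite{MR4410267} without reproducing an argument, the natural benchmark is whether your derivation from Lemma~\ref{lem:m-m} holds up — it does. The three pivots you identify are exactly the right ones: (i) the calibration $\beta=\min\{\kappa,1/c_b\}$ absorbs the constant $c_b$ so that the John--Nirenberg-type control gives $\bigl(\bint_B|\omega-(\omega)_B^{\log}|^s\,dx\bigr)^{1/s}\le(\omega)_B^{\log}$, after which Minkowski closes the estimate; (ii) the involution $\omega\mapsto\omega^{-1}$ preserves $|\log\omega|_{\BMO(B)}$ and sends $(\omega)_B^{\log}$ to its reciprocal, so the second bound comes for free; (iii) the $\BMO$-seminorm is monotone under restriction, so the part~(1) bounds apply uniformly on every sub-ball $B'\subset B$, with $s=p$ and $s=(\theta p)'$ respectively (both $\ge 1$ since $p>1$ and $\theta p>1$). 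The final bookkeeping with~\eqref{frac-weight} is also right: with $\mu=\omega^p$, outer index $\theta p$ and inner index $p$, one has $\mu^{-(\theta p)'/p}=\omega^{-(\theta p)'}$ and the exponent $p/(\theta p)'$, so raising your display to the $p$th power gives $[\omega^p]_{\mathcal{A}_{\theta p,p}(B)}\le 4^p$. This is essentially the argument in the cited source, assembled from the same ingredients the paper itself records.
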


We end this subsection with the following technical lemma.
\begin{lemma}
[\cite{Giu1}]
\label{lem:st}
Let $r>0$.
Assume that $f:[R/2,R]\rightarrow[0,\infty)$ is a bounded function satisfying
\begin{align*}
f(\rho_1)\le\tau f(\rho_2)+\frac{X}{(\rho_2-\rho_1)^s}
+\frac{Y}{(\rho_2-\rho_1)^t}
\end{align*}
for every $R/2\le\rho_1<\rho_2\le R$, where
$X,Y,s,t\ge0$ and $\tau\in(0,1)$ are fixed parameters.
Then there is a constant $c=c(s,t,\tau)>0$ such that
\begin{align*}
f(R/2)\le c\(\frac{X}{R^s}+\frac{Y}{R^t}\).
\end{align*}
\end{lemma}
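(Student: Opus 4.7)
This is the classical Giusti iteration lemma, and the natural strategy is to iterate the hypothesis along a geometrically spaced sequence of radii in $[R/2,R]$, killing the self-referencing term $\tau f(\cdot)$ via absorption in the limit. Concretely, I fix a parameter $\lambda\in(0,1)$ to be specified later, and set
\[
\rho_0 = \frac{R}{2}, \qquad \rho_{i+1}-\rho_i = \frac{R}{2}(1-\lambda)\lambda^i \quad (i\ge 0),
\]
so that $\rho_i$ increases monotonically from $R/2$ toward $R$. Applying the assumed one-step estimate with $\rho_1=\rho_i$ and $\rho_2=\rho_{i+1}$ gives the telescopic inequality
\[
f(\rho_i) \le \tau f(\rho_{i+1}) + \frac{X}{\(\tfrac{R}{2}(1-\lambda)\lambda^i\)^{s}} + \frac{Y}{\(\tfrac{R}{2}(1-\lambda)\lambda^i\)^{t}}.
\]

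Iterating this bound $k$ times starting from $\rho_0=R/2$ and collecting the powers of $\lambda^i$ into geometric sums will produce
\[
f(R/2) \le \tau^k f(\rho_k) + \frac{2^s X}{R^s(1-\lambda)^s}\sum_{i=0}^{k-1}(\tau\lambda^{-s})^i + \frac{2^t Y}{R^t(1-\lambda)^t}\sum_{i=0}^{k-1}(\tau\lambda^{-t})^i.
\]
Next I will choose $\lambda$ close enough to $1$ that both $\tau\lambda^{-s}<1$ and $\tau\lambda^{-t}<1$; since $\tau<1$, any $\lambda\in(\tau^{1/\max\{s,t\}},1)$ works, and if $s=0$ or $t=0$ the corresponding condition is vacuous and the relevant series is simply $\sum\tau^i=1/(1-\tau)$. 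With such a $\lambda$ fixed in terms of $s,t,\tau$ only, the two geometric sums are bounded by a constant depending on $s,t,\tau$ alone, while $\tau^k f(\rho_k)\to 0$ as $k\to\infty$ because $f$ is bounded and $\tau\in(0,1)$. Passing to the limit yields exactly the asserted inequality.

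The argument is entirely elementary and requires nothing from the rest of the excerpt; the only point worth a moment of care is the simultaneous smallness of $\tau\lambda^{-s}$ and $\tau\lambda^{-t}$, but this is a one-line observation rather than a genuine obstacle.
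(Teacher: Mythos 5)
Your proposal is correct, and it is precisely the standard geometric-iteration argument from the cited reference \cite{Giu1}; the paper itself does not reproduce a proof but merely invokes it. The only subtlety — choosing $\lambda\in(\tau^{1/\max\{s,t\}},1)$ so that both ratios $\tau\lambda^{-s}$ and $\tau\lambda^{-t}$ are below $1$ — you have handled correctly, and boundedness of $f$ together with $\tau<1$ indeed kills the residual term $\tau^k f(\rho_k)$ in the limit.
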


\subsection{Main results}
We are now in the position to present our problem and the main
theorem of the present paper. We recall that $\Omega$ is a bounded
open domain in $\R^n$ with $n\ge2$ and that
$\M:\ern\rightarrow\er^{n\times n}$ is a given symmetric and a.e.
positive definite matrix-valued function with \eqref{cond-M} and
$\omega=|\M|$. Then the problem under study is
\begin{align}
\label{eq-u}
\dive\(\M A_p(x,\M Du)+a(x) \M A_q(x,\M Du)\)
=
\dive\, \M G(x,\M F)
\quad\mbox{in $\Omega$,}
\end{align}
where the vector fields $A_p, \ A_q:\Omega\times\R^n\rightarrow\R^n$
are Carath\'{e}odory maps whose derivatives $D_zA_p$ and $D_z A_q$
with respect to second variable again become Carath\'eodory maps
from $\Omega\times\R^n\setminus\{0\}$ to $\R^n$. They are assumed to
satisfy
\begin{align}
\label{ApAq}
\left\{
\begin{array}{l}
|A_p(x,z)|\le L|z|^{p-1}\\
|D_zA_p(x,z)|\le L|z|^{p-2}\\
\nu|z|^{p-2}|\xi|^2\le \<D_zA_p(x,z)\xi,\xi\>
\end{array}\right.
\quad\mbox{and}\quad
\left\{
\begin{array}{l}
|A_q(x,z)|\le L|z|^{q-1}\\
|D_zA_q(x,z)|\le L|z|^{q-2}\\
\nu|z|^{q-2}|\xi|^2\le \<D_zA_q(x,z)\xi,\xi\>
\end{array}\right.
\end{align}
for a.e. $x\in\Omega$ and for all $z\in\R^n\setminus\{0\}$,
$\xi\in\R^n$ and for some $0<\nu\le L<\infty$, while the constants
$p,q$ and the function $a(\cdot)$ are initially given as in
\eqref{holder-a}. Furthermore, $F:\Omega\rightarrow\R^n$ is a given
vector field with
\begin{align*}
H(x,\M F)\in L^1(\Omega)
\end{align*}
and $G:\Omega\times\R^n\rightarrow\R^n$ is a Carath\'eodory map with
the growth condition
\begin{align}\label{G}
|G(x,z)|\le L(|z|^{p-1}+a(x)|z|^{q-1}).
\end{align}
We also recall the notation
\begin{align*}
H(x,z)=|z|^p + a(x)|z|^q \ \ \ \ \  x \in\Omega, \ z \in
\R^n
\end{align*} to observe from \eqref{ApAq} that
\begin{align*}
\<A_p(x,z)+a(x)A_q(x,z),z\>\ge cH(x,z)
\end{align*}
for some constant $c=c(p,q,\nu)$. Here we are interested in a
distributional solution $u$ to \eqref{eq-u} which means that $H(x,\M
Du)\in L^1(\Omega)$ and that for any $\varphi\in C_0^\infty(\Omega)$
we have
\begin{align*}
\int_\Omega \<A_p(x,\M Du),\M D\varphi\>\,dx + \int_\Omega a(x)\<A_q(x,\M
Du),\M D\varphi\>\,dx = \int_\Omega \<G(x,\M F),\M D\varphi\>\,dx.
\end{align*}
Then our goal is to prove the following implication
\begin{align*}
\mbox{$\forall \gamma>1$,}\quad H(x,\M F)\in L^\gamma_\loc(\Omega)
\implies H(x,\M Du)\in L^\gamma_\loc(\Omega)
\end{align*}
with the corresponding Calder\'{o}n-Zygmund estimate under minimal
regularity requirements on the matrix weight $\mathbb{M}$ and the
vector fields $A_p, \ A_q$ with respect to $x$. Note that the case
$\gamma=1$ is the classical as there does not need any extra
regularity assumptions there. On the other hand for our case
$\gamma>1$ we need to impose extra proper assumptions on the
nonlinearities $A_p$, $A_q$ and the matrix weight $\M$, to the basic
structural assumptions mentioned earlier.

Now we introduce our main assumptions.
\begin{definition}
Let $0<\delta<1$. Under the assumptions \eqref{cond-M} and
\eqref{ApAq} we say that $(\log\M, A_p,A_q)$ is $\delta$-vanishing
if there hold
\begin{align*}
|\log\M|_{\BMO(\Omega)}\le\delta
\end{align*}
and
\begin{align*}
\sup_{B\subset\Omega}
\bint_{B}\(\Theta_p+\Theta_q\)[B](x)\,dx\le\delta,
\end{align*}
where $B$ is any ball with $B \subset \Omega$ and
\begin{align}
\label{Theta} \Theta_t[B](x) = \sup_{z\in\R^n\setminus\{0\}}
\frac{|A_t(x,z)-(A_t)_{B}(z)|}{|z|^{t-1}}\le 2L
\end{align}
with $$(A_t)_{B}(z)=\bint_B A_t(x,z)\ dx$$ for $t= p, q$.
\end{definition}

In what follows we shall use the symbol
\begin{align*}
\data\equiv\data(n,p,q,\alpha,\nu,L,\Lambda,\norm{a}_{L^\infty(\Omega)}, [a]_{0,\alpha},\norm{H(x,\M Du)}_{L^1(\Omega)})
\end{align*}
to represent universal constants depending on the known data.
We further define
\begin{align*}
\Omega_{\tau}:=\{x\in\Omega:\dist(x,\partial\Omega)>\tau\}
\end{align*}
for any $\tau>0$.

We are finally ready to state the main result.

\begin{theorem}\label{thm} 
Let $u\in W^{1,1}(\Omega)$ be a
distributional solution to \eqref{eq-u} with $H(x,\M Du)\in
L^1(\Omega)$ and under the assumptions \eqref{holder-a}-\eqref{lavrentiev}, \eqref{ApAq} and \eqref{G}. 
Let $\hat{\tau}>0$ be given. 
Assume that $H(x,\M F)\in L^\gamma(\Omega_{\hat{\tau}})$ for some $\gamma\in(1,\infty)$. 
Then there exist positive small numbers $\delta$ and $r_o$, depending
only on $\data$, $\gamma$, $\hat{\tau}$ and $\|H(\cdot,\M F)\|_{L^{\gamma}(\Omega_{\hat{\tau}})}$, 
such that if $(\log\M,A_p,A_q)$ is $\delta$-vanishing, 
then 
\begin{align*}
\bint_{B_{R/2}}[H(x,\M Du)]^\gamma\,dx
\le
c\(\bint_{B_{R}} H(x,\M Du) \,dx\)^\gamma
+
c\bint_{B_{R}}[H(x,\M F)]^\gamma\,dx
\end{align*}
for all balls $B_R\subset\Omega_{2\hat{\tau}}$ such that  $0<R\le r_o\le1$, 
where $c=c(\data,\gamma,\hat{\tau},\|H(\cdot,\M F)\|_{L^{\gamma}(\Omega_{\hat{\tau}})})>0$.
\end{theorem}

\section{Problem setup}\label{sec3}
\subsection{Absence of Lavrentiev phenomenon}

In this subsection we verify the absence of the Lavrentiev phenomenon
regarding the degenerate/singular problem \eqref{0000} to setup our
problem
\begin{align}\label{func-H}
\cal{H}(v,\Omega) = \int_\Omega H(x,\M Dv) \ dx.
\end{align}
for any $v\in W^{1,1}_{\loc}(\R^n)$ with $H(x, \M Dv)\in
L^1(\Omega)$ under the same structural and regularity requirement as
was identified in \cite{CM1, BCM} to be \eqref{lavrentiev} for the
constant matrix, provided that $\log \mathbb{M}$ is sufficiently
small in the sense of $BMO$.

A natural obstruction in obtaining the interior regularity for the gradient
of a minimizer of the functional  $\cal{H}$ arises when the
associated Lavrentiev phenomenon like
\begin{align}\label{302}
\inf_{v\in X}\cal{H}(v,B)
<
\inf_{v\in Y}\cal{H}(v,B)
\end{align}
occurs for $B\Subset\Omega$, where
$X=W^{1,p}_{\omega^p}(\Omega)$ and $Y=W^{1,p}_{\omega^p}(\Omega)\cap \{v\in W^{1,1}(\Omega):|\M Dv|\in L^q(B)\}$.
As mentioned above, the assumptions \eqref{holder-a} and \eqref{lavrentiev} on $p,q,a$
are an unavoidable condition for the absence of the Lavrentiev phenomenon, 
even when $\M$ is the $n \times n$ identity matrix. 
In general we require an additional condition on $\M$
to avoid the gap in \eqref{302}. The following lemma provides a
sufficient condition on $\M$ to do this.

\begin{lemma}
\label{lem:approx2}
Let $B$ and $\tilde{B}$ be balls such that $B\Subset\Tilde{B}\Subset \Omega$.
Setting $\omega=|\M|$, suppose that \eqref{holder-a}-\eqref{lavrentiev}, and
\begin{align}\label{303}
|\log\M|_{BMO(\Tilde{B})}
\le
\beta\min\left\{\frac{1}{p'},\frac{1}{s}\right\},
\end{align}
where $\frac{1}{s}=\frac{1}{p}-\frac{\alpha}{nq}$ and $\beta$ is
given as in Lemma~\ref{lem:w-prop}. Then for every function $f\in
W^{1,p}_{\omega^p}(\Omega)$ with $H(x,\M Df)\in L^1(\Tilde{B})$, 
there exists a
sequence $\{f_k\}\subset W^{1,\infty}(B)$ such that
\begin{align*}
\mbox{$ f_k\rightarrow f$ in $W^{1,p}_{\omega^p}(B)$ and 
$\displaystyle\int_{B} H(x,\M(x) Df_k(x))\,dx\rightarrow
\int_{B} H(x,\M(x) Df(x))\,dx$}
\end{align*}
as $k\rightarrow\infty$.\end{lemma}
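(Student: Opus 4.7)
The plan is to take $f_k := f * \phi_{\epsilon_k}$ for a sequence of standard smooth mollifiers $\phi_{\epsilon_k}$ with $\epsilon_k \downarrow 0$ satisfying $\epsilon_k < \dist(B, \partial \tilde B)$. Then $f_k \in C^\infty(B)$, $\M Df_k \to \M Df$ almost everywhere in $B$, and the task reduces to showing $L^1(B)$-convergence of the two integrands $|\M Df_k|^p$ and $a(x)|\M Df_k|^q$ to their respective limits.

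For the $p$-phase, the smallness condition \eqref{303} together with Lemma~\ref{lem:matrix-scalar} yields that $|\log \omega|_{\BMO(\tilde B)}$ is small enough to invoke Lemma~\ref{lem:w-prop}(2) (specialized to $\omega^p \in \mathcal{A}_p(\tilde B)$). Combined with the pointwise estimate $|\M(x) Df_k(x)| \le c\,\omega(x)\,\cal{M}[|Df|](x)$ and the maximal function bound of Lemma~\ref{lem:Ap-prop}(4), this supplies a uniform integrable majorant; replacing $\M(x)$ by the frozen value $(\M)^{\log}_{B_{\epsilon_k}(x)}$ via Lemma~\ref{lem:m-m} and applying dominated convergence then gives $\int_B |\M Df_k|^p \dx \to \int_B |\M Df|^p \dx$.

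The essential step is the $q$-phase. Applying Jensen's inequality to the probability measure $\phi_{\epsilon_k}(x-\cdot) \dd y$, together with the BMO-smallness of $\log \M$ from \eqref{303} and Lemma~\ref{lem:m-m} to absorb the substitution $\M(x)\leftrightarrow\M(y)$ for $|x-y|<\epsilon_k$, one arrives at
\[
|\M(x) Df_k(x)|^q \le c \int |\M(y) Df(y)|^q \phi_{\epsilon_k}(x-y) \dd y.
\]
Multiplying by $a(x)$ and splitting $a(x) = a(y) + (a(x)-a(y))$, with $|a(x)-a(y)| \le [a]_{0,\alpha}\epsilon_k^\alpha$ on the support of $\phi_{\epsilon_k}(x-\cdot)$, separates the right-hand side into a good piece involving $a(y)|\M Df(y)|^q$, already controlled in $L^1(\tilde B)$ by the hypothesis, and a remainder of the form
\[
[a]_{0,\alpha}\,\epsilon_k^\alpha \int |\M Df(y)|^q \phi_{\epsilon_k}(x-y) \dd y,
\]
to which the weighted fractional maximal inequality (Lemma~\ref{lem:Aps}) applies. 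The choice $\theta = \alpha/q$ lets $\epsilon_k^\alpha$ absorb the scaling defect of $\cal{M}_\theta$, the calibration $1/s = 1/p - \alpha/(nq)$ matches the Muckenhoupt-Wheeden exponent exactly, and \eqref{303} is tuned so that Lemma~\ref{lem:w-prop}(2), applied with the parameters producing $\mathcal{A}_{p,s}$, delivers $\omega^s \in \mathcal{A}_{p,s}(\tilde B)$. Lemma~\ref{lem:Aps} then bounds $\cal{M}_{\alpha/q}[\mathbbm{1}_{\tilde B}|\M Df|]$ in $L^s(\omega^s)$ in terms of $\||\M Df|\|_{L^p(\tilde B)}$, which is finite since $H(\cdot, \M Df) \in L^1(\tilde B)$. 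Lavrentiev's condition \eqref{lavrentiev} is equivalent to $s \ge q$, so $L^s(B)\hookrightarrow L^q(B)$ converts this into an $L^1(B)$-majorant for $a(x)|\M Df_k(x)|^q$ and dominated convergence concludes.

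The main obstacle lies in aligning the three scaling parameters appearing in the $q$-phase: the H\"older exponent $\alpha$ of $a(\cdot)$, the double-phase gap $q-p$, and the fractional maximal exponent $s$ dictated by Muckenhoupt-Wheeden. The choice $\theta = \alpha/q$ is forced to absorb the $\epsilon_k^\alpha$ factor, and \eqref{lavrentiev} is exactly what makes the resulting $s$ satisfy $s \ge q$ so that the weighted bound transfers to an integrable majorant on $B$. The remaining technicality, namely controlling the substitution $\M(x) \leftrightarrow \M(y)$ inside the Jensen step via Lemma~\ref{lem:m-m}, is a routine consequence of the BMO-smallness already assumed in \eqref{303}.
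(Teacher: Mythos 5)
Your overall plan — mollify, exhibit a uniform $L^1$-majorant, apply dominated convergence, with the fractional maximal operator $\mathcal{M}_{\alpha/q}$ and the calibration $\frac1s=\frac1p-\frac{\alpha}{nq}$ handling the $q$-phase remainder — is the right one and coincides with the paper's. But the central step of your $q$-phase is not valid, and the error propagates.

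The claimed pointwise inequality
\begin{align*}
|\M(x) Df_k(x)|^q \le c \int |\M(y) Df(y)|^q \phi_{\epsilon_k}(x-y) \, dy
\end{align*}
requires a uniform bound on $|\M(x)\M(y)^{-1}|$ for $|x-y|<\epsilon_k$, i.e.\ a \emph{pointwise} comparability of $\M$ at nearby points. BMO-smallness of $\log\M$ does not give this: Lemma~\ref{lem:m-m} is an $L^s$-averaged estimate on $|\M(x)-(\M)_B^{\log}|/|(\M)_B^{\log}|$, not a pointwise one, and indeed for a prototype weight such as $\M(x)=|x|^{\sigma}\,{\rm Id}_n$ the ratio $|\M(x)\M(y)^{-1}|$ blows up as $y\to 0$ while $|x-y|<\epsilon$. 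So the ``substitution $\M(x)\leftrightarrow\M(y)$'' is not a routine consequence of \eqref{303}; it is false, and no choice of $\delta$ can repair it. The paper avoids the swap entirely: in \eqref{308} the weight $\omega(x)=|\M(x)|$ is kept at the point $x$ throughout, and it is the coefficient $a$ that is carried inside the convolution, via the auxiliary infimum $a_\epsilon(x)=\inf_{B_\epsilon(x)}a$ and the trivial inequality $a_\epsilon(x)^{1/q}\le a(y)^{1/q}$ on $B_\epsilon(x)$. This produces the majorant $c\,\omega^q(x)\,\mathcal{M}(\mathbbm{1}_{\tilde B}a^{1/q}|Df|)^q(x)$, for which the $\mathcal{A}_q$ weighted maximal inequality applies with the finite input $\int a|\M Df|^q\,dx$.

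The swap also breaks your treatment of the remainder. After Jensen with $\M(y)$ inside, the remainder contains $\int|\M Df(y)|^q\phi_{\epsilon_k}(x-y)\,dy$, which has the $q$th power \emph{inside} the average. The hypothesis $H(\cdot,\M Df)\in L^1(\tilde B)$ gives $|\M Df|^p\in L^1$ but not $|\M Df|^q\in L^1$ (no $L^q$ control on $\{a=0\}$), so this quantity is uncontrolled, and the factor $\epsilon_k^\alpha$ does not produce $\mathcal{M}_{\alpha/q}$ of anything useful. The paper's remainder \eqref{309} has the power \emph{outside}, $\big(\epsilon^{\alpha/q}\bint_{B_\epsilon(x)}|Df|\big)^q\omega^q(x)\le[\mathcal{M}_{\alpha/q}(\mathbbm{1}_{\tilde B}|Df|)(x)\,\omega(x)]^q$, which is exactly the shape Lemma~\ref{lem:Aps} handles from $L^p(\omega^p)$ data, with $s\ge q$ coming from \eqref{lavrentiev}. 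In short: your identification of $\theta=\alpha/q$, $\mathcal{A}_{p,s}$ and the role of \eqref{lavrentiev} is correct, but the majorants you construct en route to them are not integrable because of the illicit pointwise swap of $\M$; you need to keep $\omega$ at $x$ and move only the scalar coefficient $a$ (and the power) as in the paper.
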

\begin{proof}
We observe from \eqref{lavrentiev}, \eqref{303} and Lemma
\ref{lem:w-prop} that
\begin{align}\label{305}
p<q\le s, \quad
\omega^p\in\cal{A}_p(\Tilde{B}), \quad
\omega^q\in\cal{A}_q(\Tilde{B}),
\quad\mbox{and}\quad
\omega^s\in\cal{A}_{p,s}(\Tilde{B}).
\end{align}

We consider a non-negative standard mollifier $\phi\in C^\infty_0(B_1(0))$ 
with $\int_{\R^n}\phi\,dx=1$. Choosing
$\eps_0\in(0,1)$ so that $B=B_r\Subset
B_{r+\eps_0}\Subset\Tilde{B}\Subset\Omega$, we set
$\phi_\eps(x)=\frac1{\eps^n}\phi(\frac{x}{\eps})$ for $x\in B_\eps(0)$
with $0<\eps<\eps_0$. Then we clearly have $\phi_\eps\in C^\infty_0(B_\eps(0))$, 
$\int_{B_\eps(0)}\phi_\eps\,dx=1$, $0\le\phi_\eps\le
c(n)\eps^{-n}$ and $|D\phi_\eps|\le c(n)\eps^{-(n+1)}$. 
Now we
consider the mollified function $f_\eps(x)=(f\ast\phi_\eps)(x)$ and
introduce the following auxiliary functions
\begin{align*}
a_\eps(x)=\inf_{y\in B_\eps(x)} a(y) \quad\mbox{and}\quad \quad
H_\eps(x,z)=|z|^p+a_\eps(x)|z|^q,
\end{align*}
where $x\in B$ and $z\in\R^n$.
We then have that
\begin{align*}
H(x,\M(x) Df_\eps(x))\rightarrow H(x,\M(x) Df(x)) \quad\mbox{a.e. in $B$.}
\end{align*}
In addition, from the H\"older continuity of $a$ in \eqref{holder-a}, we deduce
that
\begin{align}\label{307}
H(x,\M(x) Df_\eps(x))
&\le
H_\eps(x,\M(x) Df_\eps(x))+
|a(x)-a_\eps(x)||\M(x) Df_\eps(x)|^q\nonumber\\
&\le
H_\eps(x,\M(x) Df_\eps(x))+c[a]_{0,\alpha}\eps^\alpha|\M(x)Df_\eps(x)|^q.
\end{align}
Meanwhile, in light of maximal operators, together with the choice of $\phi_\eps$ and $a_\eps(x)$, we have that
\begin{align}\label{308}
&H_\eps(x,\M(x) Df_\eps(x))\nonumber\\
&\le
|\M(x)|^p\(\int_{B_\eps(x)}|Df(y)|\phi_\eps(x-y)\,dy\)^p
+
a_\eps(x)|\M(x)|^q
\(\int_{B_\eps(x)}|Df(y)|\phi_\eps(x-y)\,dy\)^q\nonumber\\
&\le
c\omega^p(x)\(\bint_{B_\eps(x)}|Df(y)|\,dy\)^p
+
c\omega^q(x)\(\bint_{B_\eps(x)}a^{\frac{1}{q}}(y)|Df(y)|\,dy\)^q\nonumber\\
&\le
c\cal{M}(\mathbbm{1}_{\Tilde{B}}|Df|)^p(x)\omega^p(x)
+
c\cal{M}(\mathbbm{1}_{\Tilde{B}}a^{\frac{1}{q}}|Df|)^q(x)\omega^q(x)
\end{align}
and
\begin{align}\label{309}
\eps^\alpha|\M(x)Df_\eps(x)|^q
\le
c\bigg(\eps^{\frac{\alpha}{q}}\bint_{B_\eps(x)}|Df(y)|\,dy\bigg)^q\omega(x)^q
\le
c[\cal{M}_{\alpha/q}(\mathbbm{1}_{\Tilde{B}}Df)(x)\omega(x)]^q
\end{align}
for a.e. $x\in B$, where $c=c(n,p,q)>0$. Combining
\eqref{307}-\eqref{309}, we discover that
\begin{align*}
&H(x,\M(x) Df_\eps(x)) \\
&\le c
\cal{M}(\mathbbm{1}_{\Tilde{B}}|Df|)^p(x)\omega^p(x)
+ c
\cal{M}(\mathbbm{1}_{\Tilde{B}}a^{\frac{1}{q}}|Df|)^q(x)\omega^q(x)
+
c[\cal{M}_{\alpha/q}(\mathbbm{1}_{\Tilde{B}}Df)(x)\omega(x)]^q
\end{align*}
for a.e. $x\in B$, where $c=c(n, p,q,[a]_{0,\alpha})>0$.

We recall $H(x,\M Df)\in L^1(\Tilde{B})$, \eqref{cond-M},
\eqref{305} and (4) in Lemma~\ref{lem:Ap-prop}, to find that
\begin{align*}
\cal{M}(\mathbbm{1}_{\Tilde{B}}|Df|)^p\omega^p
+
\cal{M}(\mathbbm{1}_{\Tilde{B}}a^{\frac{1}{q}}|Df|)^q\omega^q
\in L^1(\Tilde{B}).
\end{align*}
Moreover, \eqref{305} and Lemma~\ref{lem:Aps} yield that
\begin{align*}
\int_{\Tilde{B}}[\cal{M}_{\alpha/q}(\mathbbm{1}_{\Tilde{B}}Df)(x)\omega(x)]^q\,dx
&\le
c\bigg(\int_{\Tilde{B}}[\cal{M}_{\alpha/q}(\mathbbm{1}_{\Tilde{B}}Df)(x)\omega(x)]^s\,dx\bigg)^{\frac{q}{s}}\nonumber\\
&\le
c\bigg(\int_{\Tilde{B}}|Df|^p\omega(x)^p\,dx\bigg)^{\frac{q}{p}}\nonumber\\
&\le
c\bigg(\int_{\Tilde{B}}|\M Df|^p\,dx\bigg)^{\frac{q}{p}},
\end{align*}
where $c=c(n,p,q,\alpha,\Lambda,|\Omega|)>0$,
which leads to
\begin{align*}
[\cal{M}_{\alpha/q}(\mathbbm{1}_{\Tilde{B}}Df)\omega]^q\in L^1(\Tilde{B}).
\end{align*}
Choosing a sequence $\{\eps_k\}$ such that $\eps_k\to0$ as $k\to\infty$, and letting $f_k=f_{\eps_k}$,
we finally apply the Lebesgue dominated convergence theorem to
deduce the conclusion.
\end{proof}

\begin{remark}
Recall $X=W^{1,p}_{\omega^p}(\Omega)$, $Y=W^{1,p}_{\omega^p}(\Omega)\cap \{v\in W^{1,1}(\Omega):|\M Dv|\in L^q(B)\}$ and \eqref{func-H}.
Letting $ m_Y=\displaystyle \inf_{v\in Y}\mathcal{H}(v,B)$
we see that
$$
m_Y\ge\inf_{u\in X}\mathcal{H}(u,B).
$$
Define the sequentially lower semicontinuous (s.l.s.c.) envelope
$$
\overline{\mathcal{H}}_Y=\sup\{\mathcal{G}:X\to[0,\infty]: 
\mbox{$\mathcal{G}$ is s.l.s.c., $\mathcal{G}\le\mathcal{H}$ on $Y$}\}.
$$
Since the constant functional $m_Y$ is s.l.s.c. and satisfies $m_Y\le \mathcal{H}$ on $Y$,
it follows $m_Y\le\overline{\mathcal{H}}_Y(u,B)$ for all $u\in X$ and hence, $m_Y=\inf_{u\in X}\overline{\mathcal{H}}_Y(u,B)$.
Moreover, in view of Lemma~\ref{lem:approx2}, for any $u\in X$ there is a sequence $\{u_k\}\subset W^{1,\infty}(B)$ such that
\begin{align}
\mbox{
$u_k\rightarrow u$ in $W^{1,p}_\omega(B)$ and
$\mathcal{H}(u_k,B)\rightarrow \mathcal{H}(u,B)$ 
as $k\rightarrow\infty$.}
\end{align}
We extend $\{u_k\}$ to whole domain so that $\{u_k\}\subset W^{1,\infty}(B)\cap Y$ under the assumption \eqref{303}.
We have from properties of the s.l.s.c. envelope $\overline{\mathcal{H}}_Y$ that
$$
m_Y
\le
\overline{\mathcal{H}}_Y(u,B)
\le
\liminf_{k\to\infty}\overline{\mathcal{H}}_Y(u_k,B)
\le
\liminf_{k\to\infty}\mathcal{H}(u_k,B)
\le
\mathcal{H}(u,B)
$$
for any $u\in X$.
Consequently,
$$\inf_{v\in Y}\mathcal{H}(v,B)=\inf_{u\in X}\mathcal{H}(u,B),$$
and thus the Lavrentiev phenomenon \eqref{302} is avoided.
For further discussion, see \cite{MR2076158,MR1351738}.
\end{remark}

\subsection{Admissible functions and distributional solutions}

In this subsection we briefly clarify admissible functions in the
context of distributions and solution spaces.

\begin{lemma}\label{lem:testing}
Let $B$ and $\tilde{B}$ be balls such that $B\Subset\Tilde{B}\Subset \Omega$.
Suppose that \eqref{holder-a}-\eqref{lavrentiev}, \eqref{G}, and \eqref{303}.
Let $f\in W^{1,1}(B)$ with $H(x,\M Df)\in L^1(B)$ be a distributional solution to the equation
\begin{align*}
  \dive\,\M A(x,\M Df)=\dive\,\M G(x,\M F)\quad\mbox{in $B$},  
\end{align*}
where $F:B\rightarrow\R^n$ is a given function with $H(x,\M F)\in
L^1(B)$, and $A:B\times\R^n\rightarrow\R^n$ is a Carath\'eodory map
with the growth condition
\begin{align}\label{a-growth-testing}
|A(x,z)|
\le
L(|z|^{p-1}+a(x)|z|^{q-1})
\end{align}
for a.e. $x\in B$, for all $z\in\R^n$, and for some constant $L>0$.
Then there holds
\begin{align}\label{testing-pde}
\int_B \<A(x,\M Df), \M D\varphi\>\,dx=
\int_B \<G(x,\M F), \M D\varphi\>\,dx.
\end{align}
for every $\varphi\in W^{1,1}_0(B)$ with $H(x,\M D\varphi)\in
L^1(B)$.
\end{lemma}
\begin{proof}
We may assume that $B=B_1(0)$ by dilation and translation.
For the result \eqref{testing-pde} it suffices to show that there
exists a sequence of functions $\{\varphi_k\}\subset C^\infty_0(B)$ such
that 
\begin{align}\label{seq-vaphi}
\mbox{$D\varphi_k\rightarrow D\varphi$ a.e. and $H(x,\M
D\varphi_k)\rightarrow H(x,\M D\varphi)$ in $L^1(B)$. }
\end{align}
Indeed, we have from \eqref{a-growth-testing}, \eqref{G} and Young's inequality
\begin{align*}
|\<A(x,\M Df), \M D\varphi_k\>|\le c\[H(x,\M Df)+H(x,\M D\varphi_k)\]
\end{align*}
and
\begin{align*}
|\<G(x,\M F), \M D\varphi_k\>|\le c\[H(x,\M F)+H(x,\M D\varphi_k)\]
\end{align*}
for some $c=c(L,p,q)>0$.
Using the Lebesgue dominated convergence theorem with \eqref{seq-vaphi}, this infers that
\begin{align*}
\<A(x,\M Df), \M D\varphi_k\>\rightarrow\<A(x,\M Df), \M D\varphi\>
\quad\mbox{strongly in $L^1(B)$}
\end{align*}
and
\begin{align*}
\<G(x,\M F), \M D\varphi_k\>\rightarrow\<G(x,\M F), \M D\varphi\>
\quad\mbox{strongly in $L^1(B)$}.
\end{align*}
Hence, \eqref{testing-pde} follows from
\begin{align*}
\int_B \<A(x,\M Df), \M D\varphi_k\>\,dx=
\int_B \<G(x,\M F), \M D\varphi_k\>\,dx.
\end{align*}

We now construct the sequence $\{\varphi_k\}$, modifying the arguments in the proof of Lemma~\ref{lem:approx2} and \cite[Proposition 3.1]{CM3}.
First, assume that 
$\varphi\in W^{1,1}_0(\R^n)$ and $H(x,\M D\varphi)\in L^1(\R^n)$  
with the zero extension of $\varphi$ to $\R^n\setminus B$.
The function $a$ can be extended to a H\"older continuous function on the whole $\R^n$, preserving the seminorm $[a]_{0,\alpha}$.
Consider a family $\{\phi_\eps\}$ of mollifiers as used in the proof Lemma~\ref{lem:approx2}.
For every $\eps\in(0,1/10)$ we define
$$\hat{\varphi}_\eps(x):=\varphi((1-3\eps)^{-1}x)
\quad\mbox{and}\quad
\hat{a}_\eps(x):=a((1-3\eps)^{-1}x)\quad \mbox{for $x\in\R^n$}.$$
Then $\supp(\hat{\varphi}_\eps)\subset B_{1-2\eps}(0)$.
We consider the mollified function $\varphi_\eps=(\hat{\varphi}_\eps\ast\phi_\eps)\in C_0^\infty(B_{1-\eps}(0))$ and
introduce the auxiliary functions
\begin{align*}
a_\eps(x)=\inf_{y\in B_\eps(x)} \hat{a}_\eps(y) \quad\mbox{and}\quad \quad
H_\eps(x,z)=|z|^p+a_\eps(x)|z|^q,
\quad\mbox{for $x\in B$ and $z\in\R^n$.}
\end{align*}
We then have from $|D\varphi_\eps-D\varphi|\le|(D\hat{\varphi}_\eps-D\varphi) \ast\phi_\eps|+|D\varphi\ast\phi_\eps-D\varphi|$
\begin{align*}
H(x,\M(x) D\varphi_\eps(x))\rightarrow H(x,\M(x) D\varphi(x)) \quad\mbox{a.e. in $B$, up to subsequence.}
\end{align*}
Also, by the H\"older continuity of $a$,
\begin{align}\label{xx-316}
H(x,\M(x) D\varphi_\eps(x))
&\le
H_\eps(x,\M(x) D\varphi_\eps(x))+c[a]_{0,\alpha}\eps^\alpha|\M(x)D\varphi_\eps(x)|^q
\quad\mbox{for $x\in B$.}
\end{align}
On the other hand, from the choice of $\phi_\eps$, $\hat{\varphi}_\eps$ and $a_\eps(x)$, we have that
\begin{align}\label{xx-317}
&H_\eps(x,\M(x) D\varphi_\eps(x))\nonumber\\
&\le
|\M(x)|^p\(\int_{B_\eps(x)}|D\hat{\varphi}_\eps(y)|\phi_\eps(x-y)\,dy\)^p
+
a_\eps(x)|\M(x)|^q
\(\int_{B_\eps(x)}|D\hat{\varphi}_\eps(y)|\phi_\eps(x-y)\,dy\)^q\nonumber\\
&\le
c\omega^p(x)\(\bint_{B_\eps(x)}|D\hat{\varphi}_\eps(y)|\,dy\)^p
+
c\omega^q(x)\(\bint_{B_\eps(x)}\hat{a}_\eps^{\frac{1}{q}}(y)|D\hat{\varphi}_\eps(y)|\,dy\)^q\nonumber\\
&\le
c\omega^p(x)\(\frac{1}{1-3\eps}\bint_{B_\eps(x)}|[D\varphi]((1-3\eps)^{-1}y)|\,dy\)^p\nonumber\\
&\quad\quad+
c\omega^q(x)\(\frac{1}{1-3\eps}\bint_{B_\eps(x)}a^{\frac{1}{q}}((1-3\eps)^{-1}y)|[D\varphi]((1-3\eps)^{-1}y)|\,dy\)^q
\end{align}
for some $c=c(n,p,q)>0$.
Setting $\tilde{B}_{\eps}(x):=B_{\eps/(1-3\eps)}(\frac{x}{1-3\eps})$, 
we observe 
$$
\left|\frac{x}{1-3\eps}-x\right|\le\frac{3\eps}{1-3\eps}
\quad\mbox{for all $x\in B=B_1(0)$,}
$$
which follows $\tilde{B}_{\eps}(x)\subset B_{\frac{4\eps}{1-3\eps}}(x)$.
Using this, maximal operators and $\eps\in(0,1/10)$ we obtain
\begin{align}\label{xx-318}
\omega^p(x)\(\frac{1}{1-3\eps}\bint_{B_\eps(x)}|[D\varphi]((1-3\eps)^{-1}y)|\,dy\)^p
&\le
2^p\omega^p(x)\(\bint_{\tilde{B}_{\eps}(x)}|D\varphi(s)|\,ds\)^p\nonumber\\
&\le
c\omega^p(x)\(\bint_{B_{\frac{4\eps}{1-3\eps}}(x)}|D\varphi(s)|\,ds\)^p\nonumber\\
&\le
c\cal{M}(|D\varphi|)^p(x)\omega^p(x)
\end{align}
for some $c=c(n,p)>0$.
Similarly, we deduce
\begin{align}\label{xx-319}
\omega^q(x)\(\frac{1}{1-3\eps}\bint_{B_\eps(x)}a^{\frac{1}{q}}((1-3\eps)^{-1}y)|[D\varphi]((1-3\eps)^{-1}y)|\,dy\)^q
\le
c\cal{M}(a^{\frac{1}{q}}|D\varphi|)^q(x)\omega^q(x)
\end{align}
for some $c=c(n,q)>0$.
We insert \eqref{xx-318} and \eqref{xx-319} into \eqref{xx-317} to get
\begin{align*}
H_\eps(x,\M(x) D\varphi_\eps(x))
\le
c\cal{M}(|D\varphi|)^p(x)\omega^p(x)
+
c\cal{M}(a^{\frac{1}{q}}|D\varphi|)^q(x)\omega^q(x)
\end{align*}
for $x\in B$, where $c=c(n,p,q)>0$.
We estimate the last term in \eqref{xx-316}.
Like the previous method we infer that
\begin{align*}
\eps^\alpha|\M(x)D\varphi_\eps(x)|^q
\le
c\omega^q(x)\bigg(\eps^{\alpha/q}\bint_{B_\eps(x)}|D\hat{\varphi}_{\eps}(y)|\,dy\bigg)^q
\le
c[\cal{M}_{\alpha/q}(D\varphi)(x)\omega(x)]^q.
\end{align*}
Choosing a sequence $\{\eps_k\}$ such that $\eps_k\to0$ as $k\to\infty$, we set $\varphi_k=\varphi_{\eps_k}$.
As the same way in the proof of Lemma~\ref{lem:approx2},
we apply Lemma~\ref{lem:Ap-prop}, Lemma~\ref{lem:Aps} and the Lebesgue dominated convergence theorem to complete the proof.
\end{proof}

\section{Properties of solutions to reference problems}\label{sec4}

In this section we present some properties of solutions to the
reference problems related to our problem \eqref{eq-u} which will be
employed later in proving our main results. To address an
appropriate function space of solutions to our reference problems,
we first introduce a related Musielak-Orlicz space. Let $B$ and
$\tilde{B}$ be balls such that $B\Subset\Tilde{B}\Subset \Omega$.
Under the assumptions \eqref{holder-a}-\eqref{lavrentiev} and that
\begin{align}\label{601}
|\log\M|_{BMO(\Tilde{B})} \le
W=\beta\min\left\{\frac{1}{s_o},\frac{1}{s}\right\},
\end{align}
where $s_o\in(p',\infty)$ as in Lemma~\ref{lem:Hsobolev}, $\frac{1}{s}=\frac{1}{p}-\frac{\alpha}{nq}$ and $\beta$ is
given as in Lemma~\ref{lem:w-prop}, we write
\begin{align*}
\begin{cases}
H_{\M}(x,z)=|\M(x)z|^p+a(x)|\M(x)z|^q \quad
\mbox{for $x\in\Omega$, $z\in\R^n$ or $\R$,}\\
\Tilde{H}_{\M}(x,t)=|t|^p\omega^p(x)+a(x)|t|^q\omega^q(x) \quad
\mbox{for $x\in\Omega$, $t\in\R$.}
\end{cases}
\end{align*}
Then we have $H_\M(x,z)=H(x,\M z)$ and
\begin{align*}
H_{\M}(x,z)\le\Tilde{H}_{\M}(x,|z|)\le \Lambda^qH_{\M}(x,z)
\end{align*}
from \eqref{cond-M}. We define the generalized Orlicz space
$L^{\Tilde{H}_\M}(B)$
\begin{align*}
L^{\Tilde{H}_\M}(B) = \left\{v\in L^1(B): \int_B
\Tilde{H}_\M(x,|v|)\,dx<\infty \right\}.
\end{align*}
This space is a separable reflexive Banach space
with the Luxemburg norm
\begin{align*}
\|v\|_{L^{\Tilde{H}_\M}(B)}
=
\inf\left\{
\varrho>0:
\int_B \Tilde{H}_\M\left(x,\frac{|v(x)|}{\varrho}\right)\,dx\le1
\right\},
\end{align*}
and we have that $L^q_{\omega^q}(B)\subset
L^{\Tilde{H}_\M}(B)\subset L^p_{\omega^p}(B)\subset L^1(B)$.
Thanks to
\begin{align*}
\min\{\lambda^p\Tilde{H}_{\M}(x, t),\lambda^q\Tilde{H}_{\M}(x,t)\}
\le
\Tilde{H}_{\M}(x,\lambda t)
\le
\max\{\lambda^p\Tilde{H}_{\M}(x, t),\lambda^q\Tilde{H}_{\M}(x,t)\},
\end{align*}
for any $\lambda>0$,
we infer from the definition of Luxemburg norm that
\begin{align}\label{norm-H1}
\min\left\{\|v\|_{L^{\Tilde{H}_{\M}}(B)}^p,\|v\|_{L^{\Tilde{H}_{\M}}(B)}^q\right\}
\le
\int_B\Tilde{H}_{\M}(x,v)\,dx
\le
\max\left\{\|v\|_{L^{\Tilde{H}_{\M}}(B)}^p,\|v\|_{L^{\Tilde{H}_{\M}}(B)}^q\right\}.
\end{align}
 And the Musielak-Orlicz-Sobolev space $W^{1,\Tilde{H}_\M}(B)$ consists of
all functions $v\in L^{\Tilde{H}_\M}(B)\cap W^{1,1}(B)$ such that
its weak partial derivatives $D_{x_i}v$ belong to
$L^{\Tilde{H}_\M}(B)$, and its norm is defined by
\begin{align*}
\norm{v}_{W^{1,\Tilde{H}_\M}(B)}
=
\norm{v}_{L^{\Tilde{H}_\M}(B)}
+
\norm{|Dv|}_{L^{\Tilde{H}_\M}(B)}.
\end{align*}
Moreover, $W^{1,\Tilde{H}_\M}_0(B)$ is defined as the closure of
$C_0^\infty(B)$ in $W^{1,\Tilde{H}_\M}(B)$. This definition for the
zero boundary Sobolev space related to $\Tilde{H}_\M$ is reasonable,
as we see that smooth functions are dense in $W^{1,\Tilde{H}_\M}(B)$
like the way in the proof of Lemma~\ref{lem:approx2} from $p'<s_o$ and $p<q\le s$. 
In view of Lemma~\ref{lem:w-prop}, we also note that
the assumptions \eqref{holder-a}-\eqref{lavrentiev}
and \eqref{601} guarantee the Sobolev Poincar\'e type inequality in
Lemma~\ref{lem:Hsobolev}. For a further discussion on
Musielak-Orlicz spaces and their associated Sobolev spaces, we refer
to \cite{MR724434,HH,MR3889985,MR3298478, MR3516828} and the
references therein.

\

Throughout this section we basically assume the followings. 
Let $B$
and $\Tilde{B}$ be balls such that $B=B_\rho\Subset\Tilde{B}\Subset
\Omega$ and $\rho\in(0,1]$, and assume \eqref{holder-a}-\eqref{lavrentiev}, \eqref{601} and \eqref{ApAq}.
We introduce the auxiliary functions
\begin{align*}
\ov{A}_p(z)=\bint_{B}A_p(x,z)\,dx, \quad
\ov{A}_q(z)=\bint_{B}A_q(x,z)\,dx,
\end{align*}
\begin{align*}
V_p(z) =|z|^{\frac{p-2}{2}}z, \quad\mbox{and}\quad
V_q(z)=|z|^{\frac{q-2}{2}}z
\end{align*}
for $z\in\R^n\setminus\{0\}$.
Then the assumption \eqref{ApAq} implies
\begin{align}\label{v-v}
\begin{cases}
|V_p(z_1)-V_p(z_2)|^2
\le
c\<A_p(x,z_1)-A_p(x,z_2), z_1-z_2\>,\\
|V_q(z_1)-V_q(z_2)|^2
\le
c\<A_q(x,z_1)-A_q(x,z_2), z_1-z_2\>
\end{cases}
\end{align}
for some constant $c=c(n,\nu,p,q)>0$. Substituting $\ov{A}_p$ for
$A_p$ and $\ov{A}_q$ for $A_q$, respectively, still works with
\eqref{ApAq}, and hence with \eqref{v-v} for $\ov{A}_p$ and
$\ov{A}_q$ as well. Next we use the notations
\begin{align*}
\ov{\M}=\(\M\)_B^{\log} \quad\mbox{and}\quad
\ov{\omega}=\(\omega\)_B^{\log}.
\end{align*}
We also use the symbol
\begin{align*}
\data_0\equiv\data_0(n,p,q,\alpha,\nu,L,\Lambda, [a]_{0,\alpha}),
\end{align*}
to represent variable constants that depend on $n,p,q,\alpha,\nu,L,\Lambda$, and $[a]_{0,\alpha}$.

\subsection{Reference problem 1}

With \eqref{G} we first consider the Dirichlet problem
\begin{align}\label{701}
\begin{cases}
\dive\( \M A_p(x,\M Dh)+a(x)\M A_q(x,\M Dh) \)=\dive\(\M G(x,\M F_o)\)
\quad\mbox{in $B$},\\
h= h_0 \quad\mbox{on $\partial B$},
\end{cases}
\end{align}
where $h_0\in W^{1,1}(B)$ with $H(x,\M Dh_0)\in L^1(B)$ and
$F_o:B\rightarrow\R^n$ with $H(x,\M F_o)\in L^1(B)$. 
Then the following lemma guarantees the existence of a unique weak solution $h$ with $h-h_0\in W^{1,\Tilde{H}_\M}_0(B)$, 
based on the monotonicity method in Musielak-Orlicz spaces, 
see \cite{MR4361836,S1,MR3071548,BOh1,MR3516828} for further discussion.

\begin{lemma}
There exists a unique weak solution to \eqref{701}.
\end{lemma}
\begin{proof}
Keeping in mind the condition \eqref{601}, we first see that 
$W^{1,\Tilde{H}_\M}_0(B)$ is a separable reflexive Banach space
and its norm is equivalently defined by
$\|Dv\|_{L^{\Tilde{H}_{\M}}(B)}$ for $v\in W^{1,\Tilde{H}_\M}_0(B)$ in view of Lemma~\ref{lem:Hsobolev}.
Letting $v=h-h_o\in W^{1,\Tilde{H}_\M}_0(B)$, \eqref{701} is rewritten by
\begin{align}\label{eq-v-XX}
\begin{cases}
\dive\( \M A_p(x,\M (Dv+Dh_0))+a(x)\M A_q(x,\M (Dv+Dh_0)) \)\hspace{-0.03cm}
=\hspace{-0.03cm}
\dive\(\M G(x,\M F_o)\)
\mbox{ in $B$},\\
v= 0 \quad\mbox{on $\partial B$}.
\end{cases}
\end{align}
Now, define an operator $T:W^{1,\Tilde{H}_\M}_0(B)\rightarrow \(W^{1,\Tilde{H}_\M}_0(B)\)'$ by
\begin{align}
[T(v)](\varphi)=
\int_B\<A_p(x,\M (Dv+Dh_0))+a(x)\M A_q(x,\M (Dv+Dh_0)), \M D\varphi\>\,dx.
\end{align}
Then $T$ is continuous and monotone from \eqref{ApAq}.
We now show that $T$ is coercive, i.e.,
\begin{align}
\frac{[T(v)](v)}{\|v\|}\to\infty
\quad\mbox{as $\|v\|\to\infty$,}
\end{align}
where $\|\cdot\|=\|\cdot\|_{W^{1,\Tilde{H}_\M}_0(B)}$.
To show this, we use \eqref{ApAq} and Young's inequality to deduce
\begin{align}
[T(v)](v)
&=
\int_B\<A_p(x,\M (Dv+Dh_0))+a(x)\M A_q(x,\M (Dv+Dh_0)), \M Dv\>\,dx\nonumber\\
&=
\int_B\<A_p(x,\M (Dv+Dh_0))+a(x)\M A_q(x,\M (Dv+Dh_0)), \M (Dv+Dh_0)\>\,dx\nonumber\\
&\quad\quad-
\int_B\<A_p(x,\M (Dv+Dh_0))+a(x)\M A_q(x,\M (Dv+Dh_0)), \M Dh_0\>\,dx
\nonumber\\
&\ge
c\int_B H(x,\M Dv)\,dx-c\int_BH(x,\M Dh_0)\,dx
\end{align}
for some $c=c(p,q,L,\nu)>0$.
According to \eqref{norm-H1} this leads to
\begin{align}
[T(v)](v)
\ge
c\min\left\{\|Dv\|_{L^{\Tilde{H}_{\M}}(B)}^p,\|Dv\|_{L^{\Tilde{H}_{\M}}(B)}^q\right\}
-
c\|H(\cdot,\M Dh_0)\|_{L^1(B)}
\end{align}
This follows from $1<p<q$
\begin{align}
\frac{[T(v)](v)}{\|v\|}
&\ge
\frac{c\min\left\{\|Dv\|_{L^{\Tilde{H}_{\M}}(B)}^p,\|Dv\|_{L^{\Tilde{H}_{\M}}(B)}^q\right\}
-
c\|H(\cdot,\M Dh_0)\|_{L^1(B)}
}{\|Dv\|_{L^{\Tilde{H}_{\M}}(B)}}
\to\infty
\end{align}
as $\|Dv\|_{L^{\Tilde{H}_{\M}}(B)}\to\infty$.
Emplying Minty-Browder Theorem (\cite[Theorem 2.2, Ch. II]{S1}) and considering
$[F_o]\in \(W^{1,\Tilde{H}_\M}_0(B)\)'$ defined by
\begin{align}
[F_o](\varphi)=\int_B\< G(x,\M F_o), \M D\varphi\>\,dx,
\end{align}
there is a weak solution $v\in W^{1,\Tilde{H}_\M}_0(B)$ to \eqref{eq-v-XX}.
Therefore, we obtain a weak solution $h=v+h_0$ to \eqref{701}.

We next show that a weak solution to \eqref{701} is unique. 
Let $h_1, h_2\in h_0+W^{1,\Tilde{H}_\M}_0(B)$ be weak solutions to \eqref{701}.
Testing $h_1-h_2$ to \eqref{701}, we have
\begin{align*}
\int_B\<A_p(x,\M Dh_1)+a(x)\M A_q(x,\M Dh_1), \M( Dh_1- Dh_2)\>\,dx
=
\int_B\< G(x,\M F_o), \M( Dh_1- Dh_2)\>\,dx
\end{align*}
and
\begin{align*}
\int_B\<A_p(x,\M Dh_2)+a(x)\M A_q(x,\M Dh_2), \M( Dh_1- Dh_2)\>\,dx
=
\int_B\< G(x,\M F_o), \M( Dh_1- Dh_2)\>\,dx.
\end{align*}
This follows from \eqref{v-v}
\begin{align*}
0
&=
\int_B\<A_p(x,\M Dh_1)-\M A_p(x,\M Dh_2), \M( Dh_1- Dh_2)\>\,dx\nonumber\\
&\quad\quad+
\int_Ba(x)\<A_q(x,\M Dh_1)-\M A_q(x,\M Dh_2), \M( Dh_1- Dh_2)\>\,dx\nonumber\\
&\ge
c\int_B |V_p(\M Dh_1)-V_p(\M Dh_2)|^2+a(x) |V_q(\M Dh_1)-V_q(\M Dh_2)|^2\,dx
\end{align*}
for some $c=c(n,\nu,p,q)>0$, 
and hence $h_1=h_2$.
\end{proof}

\begin{lemma}\label{lem:h-energy}
Let $h$ be the weak solution to \eqref{701}.
Then we have the energy estimate
\begin{align*}
\int_{B}H(x,\M Dh)\,dx
\le
c\int_{B}H(x,\M Dh_0)\,dx
+
c\int_{B}H(x,\M F_o)\,dx
\end{align*}
for some $c=c(n,p,q,\nu,L)>0$.
\end{lemma}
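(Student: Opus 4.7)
The plan is to derive the estimate by the standard energy method: test the weak formulation of \eqref{701} with the admissible function $\varphi = h-h_0$, which lies in $W^{1,\widetilde{H}_\M}_0(B)$ and therefore has $H(x,\M D\varphi)\in L^1(B)$, so the density argument of Lemma~\ref{lem:testing} (together with the Musielak--Orlicz framework recalled at the start of Section~\ref{sec4}) makes the testing legitimate. This yields
\begin{align*}
\int_B \bigl\langle A_p(x,\M Dh),\M Dh\bigr\rangle + a(x)\bigl\langle A_q(x,\M Dh),\M Dh\bigr\rangle \, dx
= \int_B \bigl\langle A_p(x,\M Dh),\M Dh_0\bigr\rangle + a(x)\bigl\langle A_q(x,\M Dh),\M Dh_0\bigr\rangle \, dx.
\end{align*}

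Next I would bound the left-hand side from below using the monotonicity/ellipticity parts of \eqref{ApAq} (applied with $z_2 = 0$, noting $A_p(x,0)=0=A_q(x,0)$), giving a constant multiple of $\int_B H(x,\M Dh)\,dx$. For the right-hand side I would apply the growth bounds $|A_p(x,z)|\le L|z|^{p-1}$ and $|A_q(x,z)|\le L|z|^{q-1}$ from \eqref{ApAq} together with Cauchy--Schwarz to get
\begin{align*}
\text{RHS} \le L\int_B |\M Dh|^{p-1}|\M Dh_0|\,dx + L\int_B a(x)|\M Dh|^{q-1}|\M Dh_0|\,dx.
\end{align*}
Then Young's inequality with parameter $\varepsilon$ (splitting the factor of $a(x)$ as $a(x)^{(q-1)/q}\cdot a(x)^{1/q}$ in the $q$-term so that $a(x)$ is distributed correctly) gives
\begin{align*}
\text{RHS} \le \varepsilon \int_B H(x,\M Dh)\,dx + c(\varepsilon)\int_B H(x,\M Dh_0)\,dx.
\end{align*}

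Choosing $\varepsilon$ small enough to absorb the first term into the lower bound of the left-hand side yields the claimed inequality with a constant $c=c(n,p,q,\nu,L)$. The only non-routine point is justifying the test function step: it requires that $h-h_0\in W^{1,\widetilde{H}_\M}_0(B)$ is reachable by smooth compactly supported approximations in the weighted Musielak--Orlicz norm, which has already been set up via the approximation scheme used in Lemma~\ref{lem:approx2} and Lemma~\ref{lem:testing} (valid under \eqref{lavrentiev} and \eqref{601}). Once this is in hand, the rest is a short Young-inequality absorption argument.
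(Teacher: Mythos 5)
Your proof is correct and essentially identical to the paper's: test with $\varphi=h-h_0$, bound the left side below by $c\int_B H(x,\M Dh)\,dx$ via the coercivity consequence of \eqref{ApAq} (which the paper records explicitly just before Theorem~\ref{thm}), bound the right side by the growth conditions, and absorb via Young's inequality. Your explicit remarks on the admissibility of $h-h_0$ as a test function and on how to split $a(x)$ in the $q$-term are just slightly more detailed versions of what the paper leaves implicit.
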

\begin{proof}
We test the weak formulation of \eqref{701} with $h-h_0$ to get
\begin{align*}
\int_{B}
\<A_p(x,\M Dh)+
a(x)A_q(x,\M Dh), \M Dh\>\,dx
&=
\int_{B}\<A_p(x,\M Dh)+
a(x)A_q(x,\M Dh), \M Dh_0\>\,dx\nonumber\\
&\quad\quad+
\int_{B} \< G(x,\M F_o),\M Dh-\M Dh_0\>\,dx.
\end{align*}
Using \eqref{ApAq}, \eqref{G} and Young's inequality,
we have
\begin{align*}
\int_{B}H(x,\M Dh)\,dx
&\le
c\int_{B}\big(|\M Dh|^{p-1}+a(x)|\M Dh|^{q-1}\big)|\M Dh_0|\,dx\nonumber\\
&\quad\quad+
c\int_{B}\big(|\M F_o|^{p-1}+a(x)|\M F_o|^{q-1}\big)(|\M Dh|+|\M Dh_0|)\,dx\nonumber\\
&\le
\tfrac{1}{2}\int_{B}H(x,\M Dh)\,dx
+
c\int_{B}H(x,\M Dh_0)\,dx
+
c\int_{B}H(x,\M F_o)\,dx
\end{align*}
for some $c=c(n,p,q,\nu,L)>0$. We then have the conclusion.
\end{proof}

\begin{lemma}\label{lem:h-higher}
Let $h$ be the weak solution to \eqref{701}. Then the followings
hold true.
\begin{enumerate}
\item 
If $H(x,\M F_o)\in L^{1+\hat{\sigma}}(B)$ for some $\hat{\sigma}>0$, then there
exists $\sigma_\ast\in(0,\hat{\sigma})$, depending only on $\data_0$, $\hat{\sigma}$
and $\norm{H(x,\M Dh)}_{L^1(B)}$,
such that we have $H(x,\M Dh)\in L^{1+\sigma_*}_{\loc}(B)$ and
\begin{align*}
\(\bint_{\frac{1}{2}B}\[ H(x,\M Dh)\]^{1+\sigma}\,dx\)^{\frac{1}{1+\sigma}}
\le
c\bint_{B} H(x,\M Dh)\,dx
+
\(\bint_{B}\[ H(x,\M F_o)\]^{1+\sigma}\,dx\)^{\frac{1}{1+\sigma}}
\end{align*}
for any $\sigma\in(0,\sigma_\ast]$,
where $c=c(\data_0, \sigma,\norm{H(x,\M Dh)}_{L^1(B)})>0$.

\item If $F_o\equiv0$ and
$[H(x,\M Dh_0)]^{1+\sigma_o}\in L^1(B)$ for some $\sigma_o>0$,
then there exists $\sigma_{**}\in(0,\sigma_o)$, depending only on
$\data_0,\sigma_o,$
and $\norm{H(\cdot,\M Dh_0)}_{L^1(B)}$, such that
we have $H(x,\M Dh)\in L^{1+\sigma_{**}}(B)$ and
\begin{align*}
\int_{B}[H(x,\M Dh)]^{1+\sigma}\,dx
\le
c\int_{B}[H(x,\M Dh_0)]^{1+\sigma}\,dx
\end{align*}
for any $\sigma\in(0,\sigma_{**}]$,
where $c=c(\data_0,\sigma, \sigma_o,\norm{H(x,\M Dh_0)}_{L^1(B)})>0$.
\end{enumerate}
\end{lemma}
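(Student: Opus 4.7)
The plan is to derive a reverse-Hölder-type inequality for $H(x,\M Dh)$ by combining a Caccioppoli estimate with the weighted Sobolev--Poincaré inequality of Lemma~\ref{lem:Hsobolev}, and then to apply a Gehring-type self-improving lemma. Parts (1) and (2) of the statement will differ only in which half of Lemma~\ref{lem:Hsobolev} is invoked and in which form of Gehring's lemma is applied.

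For (1), I would fix a pair of concentric balls $B_{r/2}\subset B_r\Subset B$ and test the weak formulation of \eqref{701} with $\varphi=(h-(h)_{B_r})\eta^q$, where $\eta\in C_0^\infty(B_r)$ is a standard cutoff with $\eta\equiv 1$ on $B_{r/2}$ and $\abs{D\eta}\le c/r$. Using \eqref{ApAq} to bound $A_p,A_q$ and Young's inequality with the customary absorbing trick, I expect the Caccioppoli bound
\[
\int_{B_{r/2}} H(x,\M Dh)\,dx \le c\int_{B_r} H\!\left(x,\frac{\M(h-(h)_{B_r})}{r}\right)dx.
\]
The choice of $\eta^q$ is what allows the $p$- and $q$-phases to be handled simultaneously. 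The smallness assumption \eqref{601} is precisely what guarantees that the hypotheses of the first half of Lemma~\ref{lem:Hsobolev} are satisfied, and that lemma upgrades the right-hand side to
\[
c\bigl(1+\norm{\M Dh}_{L^p(B_r)}^{q-p}\bigr)\Bigl(\bint_{B_r}[H(x,\M Dh)]^{q_*/p}\,dx\Bigr)^{p/q_*}.
\]
Since $q_*/p<1$ under \eqref{lavrentiev}, this is a reverse-Hölder inequality whose multiplicative constant depends only on $\data_0$ and $\norm{H(x,\M Dh)}_{L^1(B)}$, the latter being used once to control the nonlinear factor. A standard Gehring lemma then produces a threshold $\sigma_*>0$ and the stated self-improving estimate.

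For (2), the same scheme is carried out on arbitrary balls $B_\rho(y)$ with $y\in \overline{B}$. Interior balls $B_\rho(y)\Subset B$ are treated exactly as in (1). For balls that meet $\partial B$, I would test instead with $\varphi=(h-h_0)\eta^q$; this is admissible because $h-h_0\in W^{1,\Tilde{H}_\M}_0(B)$ by Lemma~\ref{lem:testing}. Since $B$ is itself a ball, the density requirement $\abs{B_\rho(y)\setminus B}\ge\nu_0\abs{B_\rho}$ of Lemma~\ref{lem:Hsobolev} holds with a universal $\nu_0$, so the second half of that lemma applies to $h-h_0$. Extending all quantities by zero outside $B$, the Caccioppoli and boundary Sobolev--Poincaré bounds combine into a global reverse-Hölder inequality in which $H(x,\M Dh_0)$ appears as an inhomogeneous right-hand side. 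A Gehring-type higher-integrability lemma adapted to this form, applied with the datum $H(x,\M Dh_0)\in L^{1+\sigma_o}(B)$, then produces some $\sigma_{**}\in(0,\sigma_o)$ and the claimed estimate; Lemma~\ref{lem:h-energy} ensures that all constants depend only on $\norm{H(x,\M Dh_0)}_{L^1(B)}$.

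The main obstacle is the $q$-phase. It produces the nonlinear factor $1+\norm{\M Dh}_{L^p}^{q-p}$ in the weighted Sobolev--Poincaré inequality and forces the subintegral exponent to be $q_*/p$ rather than the smaller $p_*/p$, so the compatibility between \eqref{lavrentiev}, the weight condition \eqref{601}, and the exponents required by Gehring's lemma must be tracked carefully throughout to keep the reverse-Hölder step usable. A secondary difficulty, visible only in (2), is that the extension and the zero-trace structure across $\partial B$ must be respected when combining Lemmas~\ref{lem:testing} and~\ref{lem:Hsobolev}; this is straightforward since $B$ is a ball, but requires some bookkeeping to close the global Gehring step.
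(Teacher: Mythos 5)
Your proposal is correct and follows essentially the same route as the paper: a Caccioppoli estimate with test function $(h-(h)_{B_{2r}})\eta^q$ (interior) or $(h-h_0)\eta^q$ (near $\partial B$), followed by the weighted Sobolev--Poincar\'e inequality of Lemma~\ref{lem:Hsobolev} to produce a reverse H\"older inequality with subexponent $q_*/p<1$, and then Gehring's lemma, with Lemma~\ref{lem:h-energy} used in part~(2) to convert the dependence on $\norm{H(\cdot,\M Dh)}_{L^1(B)}$ into one on $\norm{H(\cdot,\M Dh_0)}_{L^1(B)}$. The only slip is a minor attribution: the membership $h-h_0\in W^{1,\Tilde{H}_\M}_0(B)$ comes from the existence theory for the Dirichlet problem \eqref{701}, not from Lemma~\ref{lem:testing} (which rather concerns testing the original distributional equation with non-smooth test functions); the paper simply notes the admissibility directly.
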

\begin{proof}
(1) Let $y\in B$ and $0<2r<dist(y,\partial B)$ so that
$B_{2r}(y)\subset B$, and choose a cut-off function $\eta\in
C^\infty_0(B_{2r}(y))$ such that
$\mathbbm{1}_{B_r(y)}\le\eta\le\mathbbm{1}_{B_{2r}(y)}$ and
$|D\eta|\le\frac{2}{r}$. We then take $\(h-(h)_{B_{2r}(y)}\)\eta^q$
as a test function in the weak formulation of \eqref{701}. 
By means of \eqref{ApAq}, \eqref{G} and Young's inequality, we obtain
\begin{align*}
\int_{B_{2r}(y)}\eta^qH(x,\M Dh)\,dx
&\le
c\int_{B_{2r}(y)}\eta^{q-1}\(|\M Dh|^{p-1}+a(x)|\M Dh|^{q-1}\)\frac{\abs{\M\(h-(h)_{B_{2r}(y)}\)}}{r}\,dx
\nonumber\\
&\quad +
c\int_{B_{2r}(y)}\eta^{q-1}\(|\M F_o|^{p-1}+a(x)|\M F_o|^{q-1}\)\frac{\abs{\M\(h-(h)_{B_{2r}(y)}\)}}{r}\,dx
\nonumber\\
&\quad + c\int_{B_{2r}(y)}\eta^{q}\(|\M F_o|^{p-1}+a(x)|\M F_o|^{q-1}\) |\M Dh|\,dx
\nonumber\\
&\le \tfrac{1}{2} \int_{B_{2r}(y)}\eta^qH(x,\M Dh)\,dx
+ c\int_{B_{2r}(y)}H\bigg(x,\frac{\M\(h-(h)_{B_{2r}(y)}\)}{r}\bigg)\,dx
\nonumber\\
&\quad + c\int_{B_{2r}(y)}H(x,\M F_o)\,dx
\end{align*}
for some $c=c(n,p,q,\nu,L)>0$. Thus
\begin{align*}
\int_{B_{2r}(y)}\eta^qH(x,\M Dh)\,dx
&\le
c\int_{B_{2r}(y)}H\bigg(x,\frac{\M\(h-(h)_{B_{2r}(y)}\)}{r}\bigg)\,dx
+ c\int_{B_{2r}(y)}H(x,\M F_o)\,dx.
\end{align*}
We then apply the Sobolev-Poincar\'e type inequality in Lemma~\ref{lem:Hsobolev},
to obtain the reverse H\"older inequality
\begin{align}\label{int-reverse-hol}
\bint_{B_r(y)} H(x,\M Dh)\,dx
&\le c\(\bint_{B_{2r}(y)} \[H(x,\M Dh)\]^{\theta_o}\,dx\)^{\frac{1}{\theta_o}}
+ c\bint_{B_{2r}(y)}H(x,\M F_o)\,dx,
\end{align}
for any $B_{2r}(y)\subset B$, where the positive constant $c$ depends on $\data_0$, and $\norm{H(\cdot,\M Dh)}_{L^1(B)}$ and $\theta_o\in(0,1)$ is given as in Lemma~\ref{lem:Hsobolev}.
Therefore, using Gehring's lemma,
we arrive at the desired conclusion.

\

(2) We first note from Lemma~\ref{lem:h-energy} that
\begin{align}\label{710}
\norm{H(x,\M Dh)}_{L^1(B)}\le c\norm{H(x,\M Dh_0)}_{L^1(B)}
\end{align}
for some constant $c=c(n,p,q,\nu,L)>0$.

For any $y\in B$, we consider a ball $B_{2r}(y)$ such that
$|B_{2r}(y)\setminus B|>|B_{2r}(y)|/10$. Now, take
$\varphi=(h-h_0)\eta^q$ as a test function to \eqref{701}, where
$\eta\in C^\infty_0(B_{2r}(y))$ with
$\mathbbm{1}_{B_r(y)}\le\eta\le\mathbbm{1}_{B_{2r}(y)}$ and
$|D\eta|\le\frac{2}{r}$. Note that $\varphi$ is admissible in the
weak formulation. As in the proof of (1), we deduce that
\begin{align}\label{711}
&\int_{B_{2r}(y)\cap B}\eta^qH(x,\M Dh)\,dx\nonumber\\
&\le
c\int_{B_{2r}(y)\cap B}\eta^qH(x,\M Dh_0)\,dx
+
c\int_{B_{2r}(y)\cap B}H\bigg(x,\frac{\M\(h-h_0\)}{r}\bigg)\,dx,
\end{align}
where $c=c(n,p,q,\nu,L)>0$.
We then apply Lemma~\ref{lem:Hsobolev} with \eqref{710} in the last integral on the right-hand side in \eqref{711}, to yield
\begin{align*}
&\frac{1}{|B_{2r}(y)|}\int_{B_{2r}(y)\cap B}H\bigg(x,\frac{\M\(h-h_0\)}{r}\bigg)\,dx\\
&\le
c\(\frac{1}{|B_{2r}(y)|}\int_{B_{2r}(y)\cap B}[H(x,\M (Dh-Dh_0))]^{\theta_o}\,dx\)^{\frac{1}{\theta_o}}\\
&\le
c\(\frac{1}{|B_{2r}(y)|}\int_{B_{2r}(y)\cap B}[H(x,\M Dh)]^{\theta_o}\,dx\)^{\frac{1}{\theta_o}}
+
\frac{c}{|B_{2r}(y)|}\int_{B_{2r}(y)\cap B}H(x,\M Dh_0)\,dx
\end{align*}
for some $c=c(\data_0,\norm{H(\cdot,\M Dh_0)}_{L^1(B)})>0$,
where $\theta_o\in(0,1)$ is given as in Lemma~\ref{lem:Hsobolev}. 
This and \eqref{711} imply
\begin{align*}
\bint_{B_{r}(y)}H(x,\M Dh)\,dx
&\le
c\(\bint_{B_{2r}(y)}[H(x, \M Dh)]^{\theta_o}\,dx\)^{\frac{1}{\theta_o}}
+
c\bint_{B_{2r}(y)}H(x,\M Dh_0)\,dx
\end{align*}
with $H(x,\M Dh)=0$ and $H(x,\M Dh_0)=0$ in $B_{2r}(y)\setminus B$,
where the positive constant $c$ depends only on $\data_0$ and $\norm{H(\cdot,\M Dh_0)}_{L^1(B)}$.
The previous inequality holds for any $B_{2r}(y)\subset B$ from \eqref{int-reverse-hol} with $F_o\equiv0$ in the proof of (1).
Therefore, we conclude the result by Gehring's lemma.
\end{proof}

\begin{remark}
\label{rem:h}
It is possible to apply Lemmas~\ref{lem:h-energy} and \ref{lem:h-higher} to the problem with $\ov{\M}$ instead of $\M$ in \eqref{701}.
\end{remark}

In view of Lemma~\ref{lem:h-higher},
we deduce the following Proposition.
\begin{proposition}\label{prop:u-higher}
Under the assumptions in Theorem~\ref{thm}, there exists $\tilde{\sigma}\in(0,\gamma-1)$, depending only on $\data$, such that
\begin{align}
H(x,\M Du)\in L^{1+\tilde{\sigma}}_{\loc}(\Omega)
\end{align}
and
\begin{align}
\bint_{B_{\rho/2}} H(x,\M Du)^{1+\sigma}\,dx
\le
c\bigg(\bint_{B_{\rho}} H(x,\M Du)\,dx\bigg)^{1+\sigma}
+
c\bint_{B_{\rho}} H(x,\M F)^{1+\sigma}\,dx
\end{align}
for any $\sigma\in(0,\tilde{\sigma}]$ and for any ball $B_\rho\subset\Omega$ with $0<\rho\le1$,
where $c=c(\data,\sigma)>0$.
Moreover, by a standard covering argument,
\begin{align}
\|H(\cdot,\M Du)\|_{L^{1+\tilde{\sigma}}(\Omega_{2\hat{\tau}})}
\le
c(\data,\hat{\tau},\gamma,\|H(\cdot,\M F)\|_{L^{\gamma}(\Omega_{\hat{\tau}})}).
\end{align}
\end{proposition}

We next provide a useful lemma which can be applied to functions with the higher integrability property of Lemma~\ref{lem:h-higher}.
\begin{lemma}\label{cor:h}
Let $f\in W^{1,1}(B)$ with $H(x,\M Df)\in L^{1+\sigma_*}(B)$. Then
for any $\sigma\in(0,\sigma_*)$, there is
$\delta=\delta(n,p,q,\sigma,\sigma_*)>0$ such that if $|\log
\M|_{\BMO} \leq \delta$, then
 we have $H(x,\ov{\M}Df)\in L^{1+\sigma}(B)$ with the estimate
 \begin{align}\label{716}
\(\bint_{B}\[H(x,\ov{\M} Df)\]^{1+\sigma}\,dx\)^{\frac{1}{1+\sigma}}
\le
c\(\bint_{B}\[H(x,\M Df)\]^{1+\sigma_*}\,dx\)^{\frac{1}{1+\sigma_*}},
\end{align}
where $c=c(n,p,q,\sigma,\sigma_*)>0$.
\end{lemma}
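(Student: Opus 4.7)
The plan is to pointwise dominate $H(x,\ov{\M}Df)$ by a scalar ratio times $H(x,\M Df)$, and then absorb the ratio via H\"older's inequality and the weighted moment bound in Lemma~\ref{lem:w-prop}(1).

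First I would use \eqref{ovM} to get $|\ov{\M}z|\le\ov{\omega}|z|$ and \eqref{cond-M} to get $|\M(x)z|\ge\Lambda^{-1}\omega(x)|z|$. Setting $\rho(x):=\ov{\omega}/\omega(x)$, these combine into the pointwise bound $|\ov{\M}Df(x)|\le\Lambda\,\rho(x)\,|\M(x)Df(x)|$. Splitting into the cases $\rho\le1$ and $\rho>1$ and using $p<q$, this upgrades to $H(x,\ov{\M}Df)\le c(\Lambda,q)\bigl(1+\rho(x)^{q}\bigr)H(x,\M Df)$.

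Next I would apply H\"older's inequality on $B$ with conjugate exponents $\frac{1+\sigma_*}{1+\sigma}$ and $\frac{1+\sigma_*}{\sigma_*-\sigma}$ to the product $(1+\rho^{q})^{1+\sigma}\,[H(x,\M Df)]^{1+\sigma}$. The first factor produces precisely $\bigl(\bint_{B}[H(x,\M Df)]^{1+\sigma_*}\,dx\bigr)^{(1+\sigma)/(1+\sigma_*)}$, matching the right-hand side of \eqref{716}, while the second yields, up to a constant, $\bigl(\bint_{B}(1+\rho^{q\tilde\sigma})\,dx\bigr)^{(\sigma_*-\sigma)/(1+\sigma_*)}$ where $\tilde\sigma:=\frac{(1+\sigma)(1+\sigma_*)}{\sigma_*-\sigma}$.

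Finally I would control this moment via Lemma~\ref{lem:w-prop}(1) applied with exponent $s=q\tilde\sigma$: the condition $|\log\omega|_{\BMO(B)}\le\beta/(q\tilde\sigma)$ gives $\bint_B\omega^{-q\tilde\sigma}\,dx\le(2/\ov{\omega})^{q\tilde\sigma}$, which translates to $\bint_{B}\rho^{q\tilde\sigma}\,dx\le 2^{q\tilde\sigma}$. Passing from $\M$ to $\omega$ via Lemma~\ref{lem:matrix-scalar} shows this hypothesis is available as soon as $|\log\M|_{\BMO}\le\delta$ with $\delta:=\beta/(2q\tilde\sigma)$, a choice depending only on $n,p,q,\sigma,\sigma_*$. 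Combining the three steps and taking the $1/(1+\sigma)$-th power then yields \eqref{716}. The main obstacle is the calibration of exponents: as $\sigma\uparrow\sigma_*$ one has $\tilde\sigma\to\infty$, forcing $\delta$ correspondingly small; beyond this bookkeeping the argument is a routine combination of \eqref{ovM}, H\"older's inequality, and the two BMO lemmas.
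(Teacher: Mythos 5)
Your proposal is correct and follows essentially the same route as the paper's proof: a pointwise domination of $H(x,\ov{\M}Df)$ by a scalar ratio times $H(x,\M Df)$, H\"older's inequality with conjugate exponents $\tfrac{1+\sigma_*}{1+\sigma}$ and $\tfrac{1+\sigma_*}{\sigma_*-\sigma}$, and control of the resulting high moment of the ratio via Lemma~\ref{lem:w-prop}(1) after passing from $\M$ to $\omega$. The only cosmetic differences are that you use the ratio $\ov{\omega}/\omega$ and fold both phase terms into a single factor $(1+\rho^q)$, whereas the paper uses $|\ov{\M}|/|\M|$ (comparable via \eqref{ovM}) and treats the $p$- and $q$-phase terms separately.
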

\begin{proof}
For $\sigma\in(0,\sigma_*)$, we have that
\begin{align*}
&\bint_{B}\[H(x,\ov{\M} Df)\]^{1+\sigma}\,dx\nonumber\\
&\le
c\bint_{B}\(\frac{|\ov{\M}|}{|\M|}\)^{p(1+\sigma)}|\M Df|^{p(1+\sigma)}\,dx
+c\bint_{B}\(\frac{|\ov{\M}|}{|\M|}\)^{q(1+\sigma)}\(a(x)|\M Df|^q\)^{1+\sigma}\,dx\nonumber\\
&\le
c\(\bint_{B}\[H(x,\M Df)\]^{1+\sigma_*}\,dx\)^{\frac{1+\sigma}{1+\sigma_*}}
\[\bint_{B}
\(\frac{|\ov{\M}|}{|\M|}\)^{\frac{p(1+\sigma)(1+\sigma_*)}{\sigma_*-\sigma}}
+
\(\frac{|\ov{\M}|}{|\M|}\)^{\frac{q(1+\sigma)(1+\sigma_*)}{\sigma_*-\sigma}}
\,dx\]^{\frac{\sigma_*-\sigma}{1+\sigma_*}}
\end{align*}
for some $c=c(n,p,q,\sigma,\sigma_*)>0$.
Now, Lemma~\ref{lem:w-prop} and \eqref{ovM} imply
\begin{align*}
\bint_{B}\[H(x,\ov{\M} Df)\]^{1+\sigma}\,dx
\le
c\(\bint_{B}\[H(x,\M Df)\]^{1+\sigma_*}\,dx\)^{\frac{1+\sigma}{1+\sigma_*}}
\end{align*}
by selecting $\delta=\delta(n,p,q,\sigma,\sigma_*)>0$ sufficiently small.
\end{proof}

\begin{remark}
\label{rem:h2} 
In Lemma~\ref{cor:h} one can switch $\M$ and $\ov{\M}$ under small log-BMO control of $\M$. 
In other words, under the assumption
$f\in W^{1,1}(B)$ with $H(x,\ov{\M}Df)\in L^{1+\sigma_*}(B)$, there
is $\delta=\delta(n,p,q,\sigma,\sigma_*)>0$ for any
$\sigma\in(0,\sigma_*)$ such that if $|\log \M|_{\BMO} \leq \delta$,
then we have $H(x,\M Df)\in L^{1+\sigma}(B)$ with the analogous
estimate of \eqref{716}.
\end{remark}

\subsection{Reference problem 2}
We next consider the problem
\begin{align}\label{801}
\begin{cases}
\dive \( \ov{\M} \ov{A}_p(\ov{\M} Dw)+a(x)\ov{\M} \ov{A}_q(\ov{\M} Dw) \)
=
0
\quad\mbox{in $B$,}\\
w=w_0\quad\mbox{on $\partial B$,}
\end{cases}
\end{align}
where $w_0\in W^{1,1}(B)$ is a given boundary datum such that
\begin{align}\label{802}
|\ov{\M}Dw_0|^p+a(x)|\ov{\M}Dw_0|^q\in L^{1+\sigma_0}(B) \quad
\mbox{for some $\sigma_0>0$},
\end{align}
and
\begin{align}\label{803}
\norm{H(x,\ov{\M}Dw_0)}_{L^1(B)} \le C_0
\end{align}
for some constant $C_0>0$.

Defining the auxiliary map $\Phi:\Omega\times\R^n\rightarrow\R^n$ such that
\begin{align*}
\Phi(x,\eta):=\frac{\ov{\M}\ov{A}_p(\ov{\M}\eta)}{|\ov{\M}|^p}
+
\tilde{a}(x)\frac{\ov{\M}\ov{A}_q(\ov{\M}\eta)}{|\ov{\M}|^q},
\end{align*}
where $\tilde{a}(x):=a(x)|\overline{\M}|^{q-p}$,
we have that $w$ solves
\begin{align}\label{805}
\begin{cases}
\dive\,\Phi(x,Dw)=0
\quad\mbox{in $B$},\\
w=w_0
\quad\mbox{on $\partial B$}
\end{cases}
\end{align}
with
$|Dw_0|^p+\tilde{a}(x)|Dw_0|^q\in L^{1+\sigma_0}(B)$
and
\begin{align*}
\int_B|Dw_0|^p+\tilde{a}(x)|Dw_0|^q\,dx \le |\ov{\M}|^{-p}\Lambda^q
C_0.
\end{align*}
We observe from \eqref{cond-M} and \eqref{ApAq}
\begin{align*}
\begin{cases}
|\Phi(x,\eta)|+|D_\eta\Phi(x,\eta)||\eta|
\le
L(|\eta|^{p-1}+\tilde{a}(x)|\eta|^{q-1}),\\
\<D\Phi(\eta)\zeta,\zeta\>\ge\tilde{\nu}(|\eta|^{p-2}+\tilde{a}(x)|\eta|^{q-2})|\zeta|^2,\\
|\Phi(x_1,\eta)-\Phi(x_2,\eta)|
\le
L(\tilde{a}(x_1)-\tilde{a}(x_2))|\eta|^{q-1},
\end{cases}
\end{align*}
where $\tilde{\nu}=\frac{\nu}{\Lambda^{q+1}}$. 
We point out 
that the weak solution $w$ to \eqref{805} is already known to possess some higher
integrability, as established by the results in Sections 3 and 4 of \cite{MR3985927}. 
However, we here want higher integrability
estimates of the gradient $Dw$ coupled with the constant matrix $\ov{\M}$,
involving constants $c$ independent of the quantity $|\ov{\M}|$ in
the estimates. 

\

We first state the analogs of Lemma~\ref{lem:h-energy} and
Lemma~\ref{lem:h-higher} for the weak solution $w$ of \eqref{801}.
Like the proof of Lemma~\ref{lem:h-energy}, we take a test function
$\varphi =w-w_0$ in the weak formulation of \eqref{801} to get
\begin{align}\label{436-d}
\int_B H(x,\ov{\M}Dw)\,dx
\le
c\int_B H(x,\ov{\M}Dw_0)\,dx
\end{align}
for some $c=c(n,p,q,\nu,L)>0$.
In addition, there exists $\sigma_\ast\in(0,\sigma_0)$, depending only on $\data_0,C_0$ and $\sigma_0$, such that
\begin{align}\label{809-a}
\(\bint_{\frac{1}{2} B}\[ H(x,\ov{\M} Dw)\]^{1+\sigma}\,dx\)^{\frac{1}{1+\sigma}}
\le
c\bint_{B} H(x,\ov{\M} Dw)\,dx
\end{align}
and
\begin{align}\label{810-a}
\int_{B}[H(x,\ov{\M} Dw)]^{1+\sigma}\,dx
\le
c\int_{B}[H(x,\ov{\M} Dw_0)]^{1+\sigma}\,dx
\end{align}
for any $\sigma\in(0,\sigma_{*}]$, where $c=c(\data_0,C_0, \sigma_0,
\sigma)>0$. The proof is similar to the proof of
Lemma~\ref{lem:h-higher}.

\begin{lemma}[\cite{MR3985927}]\label{lem:w-reg}
Let $w$ be the weak solution to \eqref{801} under \eqref{802} and \eqref{803}.
Then we have
\begin{align*}
Dw\in L^{\frac{np}{n-2\beta}}_{\loc}(B)\cap W_{\loc}^{\min\{2\beta/p,\beta\},p}(B)
\end{align*}
for every $\beta<\alpha$. In particular, we have
\begin{align*}
Dw\in L^{2q-p}_\loc(B).
\end{align*}
\end{lemma}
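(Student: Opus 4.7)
The strategy is to reduce to the standard double-phase equation~\eqref{805} and then import the fractional differentiability theory of~\cite{MR3985927}. The setup preceding the lemma shows that $w$ solves $\dive\,\Phi(\cdot,Dw)=0$ where the vector field $\Phi$ satisfies $(p,q)$-growth and ellipticity with constants $L$ and $\tilde{\nu}=\nu/\Lambda^{q+1}$ independent of $\ov{\M}$, and where the modulating coefficient $\tilde{a}(x)=a(x)|\ov{\M}|^{q-p}$ is still $\alpha$-H\"{o}lder continuous on $B$ (with seminorm $[a]_{0,\alpha}|\ov{\M}|^{q-p}$). The Lavrentiev bound~\eqref{lavrentiev} is a condition on $p,q,\alpha$ alone and is inherited unchanged by the reduction, so the reduced equation falls exactly within the scope of the theory developed in~\cite{MR3985927}. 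Moreover the higher integrability~\eqref{802}--\eqref{803} of $\ov{\M}Dw_0$ transfers to analogous higher integrability of $Dw_0$ relative to the weight $\tilde{a}$, with the usual $|\ov{\M}|^{-p}\Lambda^q$ prefactor, providing the forcing that the fractional theory requires.

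Next I would invoke the fractional Sobolev estimates from Sections~3--4 of~\cite{MR3985927} applied to the weak solution $w$ of~\eqref{805}. This yields the local fractional differentiability $Dw\in W^{\min\{2\beta/p,\beta\},p}_{\loc}(B)$ for every $\beta<\alpha$, which is one half of the stated conclusion. To obtain the higher Lebesgue integrability, I would then apply the standard fractional Sobolev embedding $W^{2\beta/p,p}\hookrightarrow L^{np/(n-2\beta)}$ on compactly contained subballs (noting $2\beta/p<2\alpha/p<1$ by~\eqref{lavrentiev}), which delivers the first inclusion $Dw\in L^{np/(n-2\beta)}_{\loc}(B)$.

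For the final assertion $Dw\in L^{2q-p}_{\loc}(B)$, I would simply optimize the parameter $\beta$: the Lavrentiev condition~\eqref{lavrentiev} gives $\alpha\ge n(q-p)/p$, and since $(q-p)/p>(q-p)/(2q-p)$ we may select $\beta<\alpha$ with $\beta>n(q-p)/(2q-p)$, which forces $np/(n-2\beta)\ge 2q-p$, so the previous step applies.

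The principal obstacle is a quantitative one: a direct citation of the estimates in~\cite{MR3985927} would produce constants depending on $[\tilde{a}]_{0,\alpha}=[a]_{0,\alpha}|\ov{\M}|^{q-p}$, in tension with the paragraph preceding the lemma announcing that the constants must be independent of $|\ov{\M}|$. For the qualitative inclusions stated here this is irrelevant, but when these estimates are used later it is essential to re-parametrize the energies through the identity $|\ov{\M}|^{p}(|\cdot|^p+\tilde{a}(x)|\cdot|^q)=H(x,\ov{\M}\,\cdot)$ so that the factors $|\ov{\M}|^{q-p}$ hidden in $\tilde{a}$ cancel against the factors $|\ov{\M}|^p$ from the rescaling. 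Reading the fractional estimates in this reverted form converts them into bounds on the weighted energy $H(\cdot,\ov{\M}Dw)$ with constants depending only on $\data_0$ and $C_0$, matching the formulation used throughout the rest of the paper.
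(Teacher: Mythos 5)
Your overall approach is correct and matches what the paper intends: reduce the problem to the un-weighted double phase equation \eqref{805}, observe that the field $\Phi$ satisfies $(p,q)$-growth and ellipticity with constants depending only on $n,p,q,\nu,L,\Lambda$, that $\tilde{a}$ is still $\alpha$-H\"older, and that \eqref{lavrentiev} transfers unchanged, so the fractional differentiability theory of \cite{MR3985927} applies. Your observation about the $|\ov{\M}|^{q-p}$ factor hidden in $[\tilde{a}]_{0,\alpha}$, and how it cancels when the estimates are read back in terms of $H(x,\ov{\M}\,\cdot)$, is also the right accounting and is the reason the paper restates quantitative forms separately in Lemma~\ref{lem:w-higher}.

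However, your derivation of the Lebesgue inclusion from the Sobolev one has a gap when $p<2$. In that range $\min\{2\beta/p,\beta\}=\beta$, so the fractional membership from \cite{MR3985927} is $Dw\in W^{\beta,p}_{\loc}$, and the fractional embedding gives only $Dw\in L^{np/(n-\beta p)}_{\loc}$, which is strictly weaker than $L^{np/(n-2\beta)}_{\loc}$ since $\beta p<2\beta$. Thus the implication ``Sobolev $\Rightarrow$ Lebesgue'' you use fails exactly when $1<p<2$. The correct mechanism, which is what \cite{MR3985927} actually proves and what the subsequent proof of Lemma~\ref{lem:w-higher} uses, is that the difference quotient estimates apply to $V_p(\ov{\M}Dw)$: one gets $V_p(\ov{\M}Dw)\in W^{\beta,2}_{\loc}$, then $W^{\beta,2}\hookrightarrow L^{2n/(n-2\beta)}$ gives $V_p(\ov{\M}Dw)\in L^{2n/(n-2\beta)}_{\loc}$, and since $|V_p(z)|=|z|^{p/2}$ this is exactly $\ov{\M}Dw\in L^{np/(n-2\beta)}_{\loc}$, valid for all $p>1$. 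The membership $Dw\in W^{\min\{2\beta/p,\beta\},p}_{\loc}$ is then a \emph{separate} consequence of the same $V_p$-level estimate, not its source. Your optimization of $\beta$ in the final paragraph ($\beta>n(q-p)/(2q-p)$, possible since $n(q-p)/(2q-p)<n(q-p)/p\le\alpha$) is correct once the $L^{np/(n-2\beta)}$ inclusion is in hand; you should just route it through $V_p$ rather than through the $W^{\min\{2\beta/p,\beta\},p}$ embedding.
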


Based on Lemma~\ref{lem:w-reg}, we deduce the following lemma.
\begin{lemma}\label{lem:w-higher}
Let $B=B_{4r}$ with $r\in(0,\frac{1}{4}]$ and let $w$ be the weak solution to \eqref{801} under \eqref{802} and \eqref{803}.
Suppose that
\begin{align}\label{808}
\sup_{x\in B_r} a(x)\le K[a]_{0,\alpha}r^\alpha
\end{align}
for some $K\ge1$.
Then, for any $\bar{q}<np/(n-2\alpha)$($=\infty$ when $\alpha=1$ and $n=2$)
we have
\begin{align}\label{809}
\(\bint_{B_r}|\barM Dw|^{\bar{q}}\,dx\)^{1/\bar{q}}
\le
c\(\bint_{B_{4r}}H(x,\barM Dw)\,dx\)^{1/p},
\end{align}
where $c=c(\data,\sigma_0,C_0,\bar{q},K)>0$.
Furthermore, for $1<\gamma_1<\gamma_2\le4$, we have
\begin{align}\label{810}
\(\bint_{B_{\gamma_1r}}|\barM Dw|^{2q-p}\,dx\)^{1/(2q-p)}
\le
c\(\bint_{B_{\gamma_2r}}H(x,\barM Dw)\,dx\)^{1/p}
\end{align}
for some $c=c(\data,\sigma_0,C_0,K,\gamma_1, \gamma_2)>0$.
\end{lemma}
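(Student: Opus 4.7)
The strategy is to reduce to the constant-coefficient double phase problem \eqref{805}, apply Lemma~\ref{lem:w-reg} to obtain qualitative higher integrability, and then carefully track how the constant matrix $\ov{\M}$ enters the quantitative estimates so that the final dependence is through $H(x,\ov{\M}Dw)$ rather than $|\ov{\M}|$ separately. First, as noted after \eqref{805}, $w$ solves $\dive\,\Phi(x,Dw)=0$ in $B$ with $\tilde a(x)=a(x)|\ov{\M}|^{q-p}$, and $\Phi$ satisfies a standard double phase structure with exponents $p,q$ and H\"older coefficient $\tilde a$. Lemma~\ref{lem:w-reg} thus yields $Dw\in L^{\bar q}_{\loc}(B)$ for every $\bar q<np/(n-2\alpha)$, and in particular $Dw\in L^{2q-p}_{\loc}(B)$ since \eqref{lavrentiev} gives $n(q-p)\le\alpha p<\alpha(2q-p)$.

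To make the estimate quantitative, I would follow the Colombo--Mingione scheme of \cite{CM1, MR3985927}. The starting point is a Caccioppoli inequality for \eqref{801}, obtained by testing with $(w-(w)_{B_{2r}})\eta^q$ for a suitable cutoff $\eta$ and using the weighted Sobolev--Poincar\'e inequality of Lemma~\ref{lem:Hsobolev} (applied with the constant matrix $\ov{\M}$), exactly as in \eqref{int-reverse-hol}. Together with the Gehring self-improvement and the Morrey--Sobolev embedding of the fractional regularity $Dw\in W^{\min\{2\alpha/p,\alpha\},p}_{\loc}$ from Lemma~\ref{lem:w-reg}, this yields
\begin{equation*}
\(\bint_{B_r}|Dw|^{\bar q}\,dx\)^{1/\bar q}
\le c\(\bint_{B_{4r}}|Dw|^p+\tilde a(x)|Dw|^q\,dx\)^{1/p}
\end{equation*}
for every $\bar q<np/(n-2\alpha)$. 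The analogous inequality with $(B_{\gamma_1 r},B_{\gamma_2 r})$ in place of $(B_r,B_{4r})$ and $\bar q=2q-p$ gives the version needed for \eqref{810}.

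The step where independence from $|\ov{\M}|$ is genuinely enforced is the passage back from $(Dw,\tilde a)$ to $(\ov{\M}Dw,a)$. Multiplying the displayed inequality by $|\ov{\M}|$, using the equivalence $\Lambda^{-1}|\ov{\M}||Dw|\le|\ov{\M}Dw|\le|\ov{\M}||Dw|$ from \eqref{ovM} on the left and $|\ov{\M}|^p|Dw|^p+|\ov{\M}|^p\tilde a(x)|Dw|^q=|\ov{\M}|^p|Dw|^p+a(x)|\ov{\M}|^q|Dw|^q\eqsim H(x,\ov{\M}Dw)$ on the right, the factors $|\ov{\M}|^{q-p}$ hidden in $\tilde a$ are absorbed by $|\ov{\M}|^q$ coming from the $q$-phase term, and one arrives at \eqref{809} and \eqref{810} with constants depending on $\data,\sigma_0,C_0,\bar q,K$ (respectively $\gamma_1,\gamma_2$) but \emph{not} on $|\ov{\M}|$.

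The main obstacle is precisely this independence from $|\ov{\M}|$: naively, the Colombo--Mingione estimates produce a constant depending on $[\tilde a]_{0,\alpha}=|\ov{\M}|^{q-p}[a]_{0,\alpha}$, which could blow up for large $|\ov{\M}|$. The assumption \eqref{808} is the critical tool to avoid this, since it allows the H\"older seminorm of $\tilde a$ to be replaced by $\sup_{B_{4r}}\tilde a$ (using the $\alpha$-H\"older continuity of $a$ to extend \eqref{808} from $B_r$ to $B_{4r}$ with a slightly larger constant $c(K)$); this $\sup$ enters only through the combination $[\tilde a]_{0,\alpha}r^\alpha$ and is then absorbed into the $q$-phase energy $a(x)|\ov{\M}Dw|^q$ on the right-hand side after the multiplication by $|\ov{\M}|^p$. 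Verifying that the Colombo--Mingione argument of \cite{CM1,MR3985927} indeed uses $[\tilde a]_{0,\alpha}$ only in this scaling-invariant way is the technical heart of the proof.
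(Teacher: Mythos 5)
Your high-level plan matches the paper's proof: both reduce to the $\Phi$-formulation \eqref{805}, use Lemma~\ref{lem:w-reg} to guarantee $Dw\in L^{2q-p}_{\loc}$ qualitatively, then re-run the Colombo--Mingione difference-quotient argument to make the estimate quantitative, finally tracking $|\ov{\M}|$ so that only the combination $H(x,\ov{\M}Dw)$ survives. You also correctly identify the role of \eqref{808}: it guarantees $\norm{a}_{L^\infty(B_{mr})}\les K^\alpha r^\alpha [a]_{0,\alpha}$ for $m\in(1,4)$, which matches the $r^{2\alpha}[a]_{0,\alpha}^2$-scaling of the Caccioppoli/difference-quotient constants.

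There is, however, a genuine gap between ``Caccioppoli $+$ Gehring $+$ Morrey embedding of the fractional regularity'' and the claimed displayed inequality. The fractional difference-quotient estimate (paper's intermediate step, leading to \eqref{454-b}) produces on the right-hand side a term $(s-t)^{-2\beta}\(\norm{a}_{L^\infty}^2 + r^{2\alpha}[a]_{0,\alpha}^2\)\norm{\ov{\M}Dw}_{L^{2q-p}(B_s)}^{2q-p}$, which involves the \emph{same} $L^{2q-p}$-norm one is trying to control. Morrey embedding of $Dw\in W^{\min\{2\alpha/p,\alpha\},p}_{\loc}$ does not close this by itself, because the fractional-regularity constant from Lemma~\ref{lem:w-reg} is qualitative and not tracked with respect to $\tilde a$. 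The paper closes the loop by (a) interpolating $\norm{\ov{\M}Dw}_{L^{2q-p}}$ between $L^{p(1+\sigma)}$ and $L^{np/(n-2\beta)}$, (b) absorbing with Young's inequality, (c) iterating with Lemma~\ref{lem:st} to remove the $(s-t)$-dependence (giving \eqref{820}), and (d) invoking the Gehring estimate \eqref{809-a} to replace the $L^{p(1+\sigma)}$-norm by the $H$-energy. Steps (a)--(c) are not mentioned in your proposal, and Gehring appears in the wrong place (it supplies the $L^{p(1+\sigma)}$ input to the interpolation, not the final higher integrability). Your ``multiply by $|\ov{\M}|^p$'' bookkeeping is correct and would reproduce the same estimates, but the paper avoids it by performing the whole difference-quotient computation with the test function $\tau_{-h}(\eta^2\tau_h w)$ directly on $\ov{\M}Dw$ and $a(\cdot)$, never passing through $\tilde a$.
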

\begin{proof}
We note that 
$\ov{\M}Dw\in L^{\frac{np}{n-2\beta}}_{\loc}(B)\cap
W_{\loc}^{\min\{2\beta/p,\beta\},p}(B)$
for any $\beta<\alpha$, in
particular, 
$\ov{\M}Dw\in L^{2q-p}_\loc(B)$
from Lemma~\ref{lem:w-reg}.
This means that we do not require the approximation arguments like the proof of \cite[Theorem 5.1]{CM1}  
Using rescaled functions, we may assume that $B_1\Subset B$.
Let $\eta\in C_0^\infty(B_{3/4})$ be a cut-off function such that
$0\le\eta\le1$, $\eta=1$ in $B_{2/3}$ and $|D\eta|^2+|D^2\eta|\le
10^4$. Choose $h\in\R^n$ so that $0<|h|\le10^{-4}$. Defining the
finite difference operator
$$
\tau_hf(x)\equiv \tau_h(f)(x)=f(x+h)-f(x)
$$
for a function $f$,
we take a test function $\varphi=\tau_{-h}(\eta^2\tau_h w)$ in \eqref{801} to get
\begin{align*}
I_0
&=
\int_{B_{1}}\eta^2(x)
\<\ov{A}_p(\ov{\M}Dw(x+h))-\ov{A}_p(\ov{\M}Dw(x)),\ \tau_h(\ov{\M} Dw)(x)\>\,dx\nonumber\\
&\quad\quad+
\int_{B_{1}}\eta^2(x)
a(x+h)\<\ov{A}_q(\ov{\M}Dw(x+h))-\ov{A}_q(\ov{\M}Dw(x)),\ \tau_h(\ov{\M} Dw)(x)\>\,dx\nonumber\\
&=
-
\int_{B_{1}}\eta^2(x)
(a(x+h)-a(x))\<\ov{A}_q(\ov{\M}Dw(x)),\ \tau_h(\ov{\M} Dw)(x)\>\,dx\nonumber\\
&\quad\quad-
2\int_{B_{1}}\eta(x)
\tau_h(\ov{\M} w)(x)
(a(x+h)-a(x))\<\ov{A}_q(\ov{\M}Dw(x)),\ D\eta(x)\> \,dx\nonumber\\
&\quad\quad-
2\int_{B_{1}}\eta(x)
\tau_h(\ov{\M} w)(x)
\<\ov{A}_p(\ov{\M}Dw(x+h))-\ov{A}_p(\ov{\M}Dw(x)),\ D\eta(x)\> \,dx\nonumber\\
&\quad\quad-
2\int_{B_{1}}\eta(x)
a(x+h)
\tau_h(\ov{\M} w)(x)
\<\ov{A}_q(\ov{\M}Dw(x+h))-\ov{A}_q(\ov{\M}Dw(x)),\ D\eta(x)\> \,dx\nonumber\\
&=:I_1+I_2+I_3+I_4.
\end{align*}
We then derive the following estimates for $I_0$, $I_1$, $I_2$, $I_3$, and $I_4$
\begin{align*}
\int_{B_{1}}\eta^2
|\tau_h[V_p(\ov{\M}Dw)]|^2\,dx
\le cI_0,
\end{align*}
\begin{align*}
|I_1|
\le
\eps I_0
+
\frac{c|h|^{2\alpha}[a]_{C^{0,\alpha}(B_1)}^2}{\eps}
\int_{B_{1}}|\ov{\M}Dw|^{2q-p}\,dx,
\end{align*}
\begin{align*}
|I_2|
\le
|h|^{2\alpha}
\(\int_{B_1}|\ov{\M}Dv|^p\,dx
+
[a]_{C^{0,\alpha}(B_1)}^2
\int_{B_{1}}|\ov{\M}Dw|^{2q-p}\,dx\),
\end{align*}
\begin{align*}
|I_3|
\le
\eps I_0
+
\frac{c|h|^{2}}{\eps}
\int_{B_{1}}|\ov{\M}Dw|^{p}\,dx,
\end{align*}
and
\begin{align*}
|I_4|
\le
\eps I_0
+
\frac{c|h|^{2\alpha}}{\eps}
\int_{B_{1}}|\ov{\M}Dw|^{p}\,dx
+
\frac{c|h|^{2\alpha}\(\norm{a}_{L^\infty(B_1)}^2+[a]_{C^{0,\alpha}(B_1)}^2\)}{\eps}
\int_{B_{1}}|\ov{\M}Dw|^{2q-p}\,dx
\end{align*}
for any $\eps>0$ where $c=c(n,p,q,\nu,L)>0$, see Step 3 of the proof
of \cite[Theorem 5.1]{CM1} for further details. Combining
the estimates and choosing $\eps$ small enough, we conclude
\begin{align*}
&|h|^{-2\alpha}\int_{B_{2/3}}
|\tau_h[V_p(\ov{\M}Dw)]|^2\,dx\nonumber\\
&\le
c\int_{B_{1}}|\ov{\M}Dw|^{p}\,dx
+
c\(\norm{a}_{L^\infty(B_1)}^2+[a]_{C^{0,\alpha}(B_1)}^2\)
\int_{B_{1}}|\ov{\M}Dw|^{2q-p}\,dx,
\end{align*}
where $c=c(n,p,q,\nu,L)>0$. From the embedding of the fractional
Sobolev spaces in \cite[Section 5]{CM1}, we obtain that for every $\beta\in(0,\alpha)$
\begin{align*}
\norm{V_p(\ov{\M}Dw)}_{L^{\frac{2n}{n-2\beta}}(B_{1/2})}^2
&\le
c\norm{\ov{\M}Dw}_{L^p(B_{1})}^p
+
c\norm{V_p(\ov{\M}Dw)}_{L^{2}(B_{1})}^2\nonumber\\
&\quad\quad+
c\(\norm{a}_{L^\infty(B_1)}^2+[a]_{C^{0,\alpha}(B_1)}^2\)
\norm{\ov{\M}Dw}_{L^{2q-p}(B_1)}^{2q-p},
\end{align*}
where $c=c(n,p,q,\nu,L,\alpha,\beta)>0$. 
Now, by using covering argument and after scaling back this, as done in Step 4 of the proof
of \cite[Theorem 5.1]{CM1}, we have that
\begin{align}\label{454-b}
\norm{\ov{\M}Dw}_{L^{\frac{np}{n-2\beta}}(B_{t})}^p
&\le
\frac{c}{(s-t)^{2\beta}}
\norm{\ov{\M}Dw}_{L^p(B_{s})}^p\nonumber\\
&\quad\quad+
\frac{c}{(s-t)^{2\beta}}
\(\norm{a}_{L^\infty(B_{2r})}^2+r^{2\alpha}[a]_{C^{0,\alpha}(B_{2r})}^2\)
\norm{\ov{\M}Dw}_{L^{2q-p}(B_s)}^{2q-p}
\end{align}
for $0<r\le t<s\le2r$.

Next, for any fixed
\begin{align*}
 \sigma<\min\left\{\frac{\sigma_*}{2},\frac{2q}{p}-2,\frac{2\alpha}{n}\right\}, 
\end{align*}
where $\sigma_*$ in \eqref{809-a} and \eqref{810-a}, we let
$\beta\in(\frac{\alpha}{1+\sigma},\alpha)$ so that
$2q-p<\frac{np}{n-2\beta}$. Then thanks to the interpolation
inequality
\begin{align*}
\norm{\ov{\M}Dw}_{L^{2q-p}(B_s)} \le
\norm{\ov{\M}Dw}_{L^{p(1+\sigma)}(B_s)}^{1-\theta}
\norm{\ov{\M}Dw}_{L^{\frac{np}{n-2\beta}}(B_s)}^{\theta}
\end{align*}
for
$\theta=\frac{n(2q-2p-p\sigma)}{[2\beta-(n-2\beta)\sigma](2q-p)}$,
\eqref{454-b} leads to
\begin{align*}
\norm{\ov{\M}Dw}_{L^{\frac{np}{n-2\beta}}(B_{t})}^p
&\le
\frac{1}{2}\norm{\ov{\M}Dw}_{L^{\frac{np}{n-2\beta}}(B_{s})}^p
+
\frac{c}{(s-t)^{2\beta}}
\norm{\ov{\M}Dw}_{L^p(B_{2r})}^p\nonumber\\
&\hspace{0.23cm}  +
c\[\frac{1}{(s-t)^{2\beta}}
\(\norm{a}_{L^\infty(B_{2r})}^2+r^{2\alpha}[a]_{C^{0,\alpha}(B_{2r})}^2\)
\norm{\ov{\M}Dw}_{L^{p(1+\sigma)}(B_{2r})}^{(1-\theta)(2q-p)}\]^{\frac{p}{p-(2q-p)\theta}}
\end{align*}
for some $c=c(\data_0,C_0,\beta)>0$.
We here used Young's inequality with conjugate exponents
$\frac{p}{\theta(2q-p)}$ and $\frac{p}{p-\theta(2q-p)}$
as in the proof of \cite[Theorem 3]{MR3985927}.
Applying Lemma~\ref{lem:st} the previous estimate implies
\begin{align}\label{820}
\norm{\ov{\M}Dw}_{L^{\frac{np}{n-2\beta}}(B_{r})}^p
&\le
\frac{c}{r^{2\beta}}
\norm{\ov{\M}Dw}_{L^p(B_{2r})}^p\nonumber\\
&\hspace{0.23cm}  +
c\[\frac{1}{r^{2\beta}}
\(\norm{a}_{L^\infty(B_{2r})}^2+r^{2\alpha}[a]_{C^{0,\alpha}(B_{2r})}^2\)
\norm{\ov{\M}Dw}_{L^{p(1+\sigma)}(B_{2r})}^{(1-\theta)(2q-p)}\]^{\frac{p}{p-(2q-p)\theta}}
\end{align}
for some $c=c(\data_0,C_0,\beta)>0$. 
Moreover, by \eqref{808}, the
H\"older continuity of $a$ and $K\ge1$, we find that
\begin{align*}
\sup_{x\in B_{mr}}a(x)
\le(K+3m)r^\alpha [a]_{C^{0,\alpha}}
\le cK^\alpha r^\alpha [a]_{C^{0,\alpha}}
\quad
\mbox{for $m\in(1,4)$},
\end{align*}
from which \eqref{820} yields
\begin{align*}
&\(\bint_{B_r}|\ov{\M}Dw|^{\frac{np}{n-2\beta}}\,dx\)^{\frac{n-2\beta}{np}}\nonumber\\
&\le
c\[1+(K^2[a]_{C^{0,\alpha}}^2)^{b_1/p}
\norm{\ov{\M}Dw}_{L^{p(1+\sigma)}(B_{2r})}^{\frac{b_2(1+\sigma)-1}{p(1+\sigma)}}\]
\(\bint_{B_{2r}}|\ov{\M}Dw|^{p(1+\sigma)}\,dx\)^{\frac{1}{p(1+\sigma)}}\nonumber\\
&\le
c\(\bint_{B_{2r}}|\ov{\M}Dw|^{p(1+\sigma)}\,dx\)^{\frac{1}{p(1+\sigma)}},
\end{align*}
where $b_1=\frac{p}{p-(2q-p)\theta}$,
$b_2=\frac{(1-\theta)(2q-p)}{(1+\sigma)[p-(2q-p)\theta]}$ and
$c=c(\data_0,C_0,\beta,K)$, see the proof of \cite[Theorem
5]{MR3985927} for details. In view of \eqref{809-a},
we finally obtain
\begin{align*}
\(\bint_{B_r}|\ov{\M}Dw|^{\frac{np}{n-2\beta}}\,dx\)^{\frac{n-2\beta}{np}}
\le
c\(\bint_{B_{4r}}H(x,\ov{\M}Dw)\,dx\)^{\frac{1}{p}}
\end{align*}
for some $c=c(\data,C_0,\beta,K)>0$. Making a suitable choice of
$\beta$ and using H\"older's inequality, we obtain \eqref{809}.
Moreover, the estimate \eqref{810} follows by \eqref{809} and a
covering argument.
\end{proof}

\subsection{Reference problem 3}

We finally consider the weak solution $v$ to the problem
\begin{align}\label{1001}
\begin{cases}
\dive \big(\ov{\M}\ov{ A}_p(\ov{\M} Dv)+a_o\ov{\M} \ov{A}_q(\ov{\M} Dv)\big)
=0
\quad\mbox{in $B$},\\
v=v_0
\quad\mbox{on $\partial B$,}
\end{cases}
\end{align}
where $a_o\ge0$ is a fixed constant.
Like the proof of Lemma~\ref{lem:h-energy}, we have the energy estimate
\begin{align*}
\bint_{B}(|\ov{\M}Dv|^p+a_o|\ov{\M}Dv|^q)\,dx
\le
c\bint_{B}(|\ov{\M}Dv_0|^p+a_o|\ov{\M}Dv_0|^q)\,dx
\end{align*}
for some $c=c(n,p,q,\nu,L)>0$.
We now provide regularity results for the weak solution $v$.

\begin{lemma}\label{lem:v-lip}
Let $v$ be the weak solution to \eqref{1001}.
Then we have
\begin{align*}
\sup_{\frac{1}{2}B}\(|\ov{\M}Dv|^p+a_o|\ov{\M}Dv|^q\)
\le
c\bint_{B}(|\ov{\M}Dv|^p+a_o|\ov{\M}Dv|^q)\,dx,
\end{align*}
where  $c=c(n,p,q,\nu,L,\Lambda)>0.$
\end{lemma}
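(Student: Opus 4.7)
The strategy is to reduce the matrix-weighted reference problem to a standard autonomous double phase equation by a linear change of variables that absorbs the constant matrix $\ov{\M}$, and then invoke the known Lipschitz estimate in that setting.

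First I would set $y=\ov{\M}^{-1}x$ and $u(y):=v(\ov{\M}y)$. Since $\ov{\M}$ is symmetric, a direct computation gives $Du(y)=\ov{\M}Dv(\ov{\M}y)$, so the weighted gradient $\ov{\M}Dv$ becomes the ordinary gradient $Du$ after the change of variables. Testing the weak formulation of \eqref{1001} against $\varphi(x)=\psi(\ov{\M}^{-1}x)$, using $\ov{\M}D\varphi(x)=D\psi(\ov{\M}^{-1}x)$, and factoring out the Jacobian $|\det\ov{\M}|$, one sees that $u$ is a weak solution of the autonomous equation
\begin{equation*}
\dive\(\ov{A}_p(Du)+a_o\,\ov{A}_q(Du)\)=0\qquad\text{in }\ov{\M}^{-1}B.
\end{equation*}
Since $\ov{A}_p(z)=\bint_B A_p(x,z)\,dx$ and $\ov{A}_q(z)=\bint_B A_q(x,z)\,dx$, the $p$- and $q$-growth and monotonicity conditions in \eqref{ApAq} pass to these averaged nonlinearities with the same constants $\nu, L$.

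Next I would invoke the sharp Lipschitz bound for autonomous double phase equations with a constant coefficient $a_o\ge0$. Since $a_o$ is just a number and not a H\"older function of $x$, no Lavrentiev-type restriction like \eqref{lavrentiev} is required: a standard Caccioppoli inequality on truncations $(|Du|-k)_+$, followed by a Moser or De Giorgi iteration, produces for any ball $B'\Subset\ov{\M}^{-1}B$ the estimate
\begin{equation*}
\sup_{\frac{1}{2}B'}\(|Du|^p+a_o|Du|^q\)\le c\bint_{B'}\(|Du|^p+a_o|Du|^q\)\,dy
\end{equation*}
with $c=c(n,p,q,\nu,L)>0$; this is the classical gradient sup-estimate worked out in \cite{CM1, MR3985927}.

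Finally, I would transfer the estimate back to the original ball $B=B_\rho$. Under the linear map $x\mapsto\ov{\M}^{-1}x$ the Euclidean ball $B$ is sent to an ellipsoid whose axis ratio is controlled by $|\ov{\M}||\ov{\M}^{-1}|\le\Lambda$, so one can choose $B'\subset\ov{\M}^{-1}B$ of radius comparable to $\rho/|\ov{\M}|$ (up to a $\Lambda$-factor) with $\ov{\M}(\frac{1}{2}B')\supset\frac{1}{2}B$. Combining this choice with the identity $|Du(\ov{\M}^{-1}x)|=|\ov{\M}Dv(x)|$ and the change-of-variables formula in the integral average transfers the sup bound from $\frac{1}{2}B'$ to $\frac{1}{2}B$ and the average from $B'$ to $B$, yielding the claimed inequality with an additional $\Lambda$-dependence in the constant. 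The one place requiring care, and which I expect to be the main bookkeeping obstacle, is precisely this geometric comparison between Euclidean balls in $x$ and $y$ coordinates; however since the axis ratio of $\ov{\M}^{-1}B$ is uniformly bounded by $\Lambda$, the resulting constant remains of the announced form $c=c(n,p,q,\nu,L,\Lambda)$.
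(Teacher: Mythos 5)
Your approach is genuinely different from the paper's, and the comparison is instructive. The paper does \emph{not} change variables: it keeps the solution $v$ and the ball $B$ fixed and instead absorbs $\ov{\M}$ into the nonlinearity by setting
\begin{align*}
\Phi_o(\eta):=\frac{\ov{\M}\ov{A}_p(\ov{\M}\eta)}{|\ov{\M}|^p}
+
\tilde{a}_o\frac{\ov{\M}\ov{A}_q(\ov{\M}\eta)}{|\ov{\M}|^q},
\qquad \tilde{a}_o=a_o|\ov{\M}|^{q-p},
\end{align*}
so that $v$ solves $\dive\Phi_o(Dv)=0$ on the \emph{same} ball $B$, with $\Phi_o$ satisfying constant-coefficient double phase structure conditions whose ellipticity constant $\tilde\nu=\nu/\Lambda^{q+1}$ carries the $\Lambda$-dependence. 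Then \cite[Theorem~2.1]{CM3} gives the sup bound for $|Dv|^p+\tilde a_o|Dv|^q$ over $\tfrac12 B$ directly (with a constant independent of $\tilde a_o$, which is essential since $\tilde a_o$ scales with $|\ov{\M}|^{q-p}$), and one finishes by multiplying through by $|\ov{\M}|^p$ and invoking \eqref{cond-M}. This sidesteps all ellipsoid geometry. Your route — the linear change of variables $y=\ov{\M}^{-1}x$, $u(y)=v(\ov{\M}y)$, yielding the autonomous equation $\dive(\ov{A}_p(Du)+a_o\ov{A}_q(Du))=0$ on $\ov{\M}^{-1}B$ — is logically equivalent and perhaps more transparent conceptually, but it trades the rescaled nonlinearity for a non-spherical domain, and that is where your final step goes wrong.

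Concretely, the claim that one can pick a single ball $B'\subset\ov{\M}^{-1}B$ of radius $\sim\rho/|\ov{\M}|$ with $\ov{\M}(\tfrac12 B')\supset\tfrac12 B$ is false in general. Write $\lambda_1\ge\cdots\ge\lambda_n>0$ for the eigenvalues of $\ov{\M}$ and take $B=B_\rho(0)$. Then $\ov{\M}^{-1}B$ is an ellipsoid with semi-axes $\rho/\lambda_i$, and any ball centered at $0$ contained in it has radius $\le\rho/\lambda_1$. On the other hand, $\ov{\M}(\tfrac12 B_{r'})\supset B_{\rho/2}$ forces $r'/2\cdot\lambda_n\ge\rho/2$, i.e.\ $r'\ge\rho/\lambda_n$. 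These two conditions are compatible only when $\lambda_1=\lambda_n$, i.e.\ $\ov{\M}$ is a multiple of the identity. For a generic $\ov{\M}$ with $|\ov{\M}||\ov{\M}^{-1}|\le\Lambda$ the single-ball transfer fails. The fix is a routine covering argument: for each $x_0\in\tfrac12 B$, the ball of radius $\tfrac12\rho/|\ov{\M}|$ about $y_0=\ov{\M}^{-1}x_0$ lies in $\ov{\M}^{-1}B$, the interior Lipschitz estimate on that ball bounds $|Du(y_0)|^p+a_o|Du(y_0)|^q$ by the mean over it, and one then compares with the mean over the whole ellipsoid and changes variables back, picking up only a factor of $\Lambda^n$. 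You flagged this step as the ``main bookkeeping obstacle,'' which was the right instinct; but as written the chosen $B'$ does not exist, and the estimate needs the per-point covering rather than a single ball.

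Aside from that, the rest is sound: symmetry of $\ov{\M}$ gives $Du(y)=\ov{\M}Dv(\ov{\M}y)$, the averaged nonlinearities $\ov{A}_p,\ov{A}_q$ inherit \eqref{ApAq} with the same $\nu,L$, and because $a_o$ is a constant the autonomous Lipschitz estimate applies without any Lavrentiev restriction and with a constant independent of $a_o$, exactly as in the paper's use of \cite[Theorem~2.1]{CM3}.
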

\begin{proof}
Defining
\begin{align}\label{1004}
\Phi_o(\eta):=\frac{\ov{\M}\ov{A}_p(\ov{\M}\eta)}{|\ov{\M}|^p} +
\tilde{a}_o\frac{\ov{\M}\ov{A}_q(\ov{\M}\eta)}{|\ov{\M}|^q}
\end{align}
with $\tilde{a}_o=a_o|\overline{\M}|^{q-p}$, we have from
\eqref{cond-M} and \eqref{ApAq} that
\begin{align} \label{1005}
\begin{cases}
|\Phi_o(\eta)|+|D\Phi_o(\eta)||\eta|
\le
L(|\eta|^{p-1}+\tilde{a}_0|\eta|^{q-1}),\\
\<D\Phi_o(\eta)\zeta,\zeta\>\ge\tilde{\nu}(|\eta|^{p-2}+\tilde{a}_o|\eta|^{q-2})|\zeta|^2
\end{cases}
\end{align}
for $\tilde{\nu}=\frac{\nu}{\Lambda^{q+1}}$.
We observe that $v$ solves the problem
\begin{align*}
\dive\, \Phi_o(Dv)=0 \quad\mbox{in $B$}
\end{align*}
and from \cite[Theorem 2.1]{CM3}
\begin{align}\label{1007}
\sup_{\frac{1}{2}B}(|Dv|^p+\tilde{a}_o|Dv|^q)
\le
c\bint_{B}(|Dv|^p+\tilde{a}_o|Dv|^q)\,dx
\end{align}
for some constant $c$ depending only on $n,p,q,\nu,L,\Lambda$, in
particular, being independent of $\tilde{a}_o$. Multiplying
$|\ov{\M}|^{p}$ on both sides in \eqref{1007} and applying
\eqref{cond-M}, we have the conclusion.
\end{proof}

\begin{lemma}\label{lem:v-cz}
Let $d>1$ and $v$ be the weak solution to \eqref{1001}.
If $|\ov{\M}Dv_0|^p+a_o|\ov{\M}Dv_0|^q\in L^d(B)$, then
we have $|\ov{\M}Dv|^p+a_o|\ov{\M}Dv|^q\in L^d(B)$
with the estimate
\begin{align*}
\bint_{B}\(|\ov{\M}Dv|^p+a_o|\ov{\M}Dv|^q\)^d\,dx
\le
c\bint_{B}\(|\ov{\M}Dv_0|^p+a_o|\ov{\M}Dv_0|^q\)^d\,dx
\end{align*}
where $c=c(n,p,q,\nu,L,\Lambda,d)>0$.
\end{lemma}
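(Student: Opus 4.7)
The plan is to reduce \eqref{1001} to a standard autonomous double-phase Dirichlet problem and then invoke the classical Calder\'on--Zygmund theory. As in the proof of Lemma~\ref{lem:v-lip}, the identity $\ov{\M}\ov{A}_p(\ov{\M}\eta) + a_o \ov{\M}\ov{A}_q(\ov{\M}\eta) = |\ov{\M}|^p \Phi_o(\eta)$ with $\Phi_o$ from \eqref{1004} shows that $v$ also solves $\dive \Phi_o(Dv) = 0$ in $B$ with $v = v_0$ on $\partial B$. By \eqref{1005}, $\Phi_o$ enjoys an autonomous double-phase structure with the constant coefficient $\tilde{a}_o = a_o|\ov{\M}|^{q-p}$, so the problem is reduced to a classical one without $x$-dependence.

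Next I would convert the nonzero-boundary Dirichlet problem into a zero-boundary problem with a divergence-type source. Setting $u = v - v_0$, the shifted equation
\begin{align*}
\dive\bigl[\Phi_o(Du + Dv_0) - \Phi_o(Dv_0)\bigr] = -\dive \Phi_o(Dv_0)
\end{align*}
has the form $\dive \tilde{A}(x, Du) = \dive G$, where $\tilde{A}(x,z) := \Phi_o(z + Dv_0(x)) - \Phi_o(Dv_0(x))$ inherits the double-phase growth and ellipticity conditions and where, by \eqref{1005}, the source satisfies $|G| \le L(|Dv_0|^{p-1} + \tilde{a}_o |Dv_0|^{q-1})$. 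The Calder\'on--Zygmund estimate for autonomous double-phase equations, developed in \cite{CM3, MR3985927}, then applies: since the coefficient here is constant in $x$, no smallness hypothesis on the oscillation of $\tilde{a}_o$ is required and no Lavrentiev phenomenon arises. This yields
\begin{align*}
\bint_B \bigl(|Dv|^p + \tilde{a}_o |Dv|^q\bigr)^d \, dx \le c \bint_B \bigl(|Dv_0|^p + \tilde{a}_o |Dv_0|^q\bigr)^d \, dx,
\end{align*}
with a constant $c$ depending only on $n, p, q, \nu, L, \Lambda, d$. Finally, multiplying by $|\ov{\M}|^{pd}$ and invoking the comparability $\Lambda^{-1}|\ov{\M}||\xi| \le |\ov{\M}\xi| \le |\ov{\M}||\xi|$ from \eqref{cond-M} together with the identity $\tilde{a}_o |\ov{\M}|^p = a_o |\ov{\M}|^q$ transports the estimate back into the desired weighted form.

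The main obstacle is establishing the Calder\'on--Zygmund estimate in the second step with a constant genuinely uniform in $\tilde{a}_o$, since $|\ov{\M}|^{q-p}$ can be arbitrarily large and a naive invocation of the double-phase CZ theory could produce constants that blow up with $\tilde{a}_o$. A clean way to secure this uniformity is to derive the CZ estimate directly from the sup-estimate of Lemma~\ref{lem:v-lip}---whose constant depends only on $n, p, q, \nu, L, \Lambda$---via a good-$\lambda$ or Caffarelli--Peral-type approximation scheme adapted to the $(p, q)$-setting: on each ball, compare $v$ to the homogeneous solution sharing its boundary trace, apply the Lipschitz estimate to the comparison solution, and conclude via a maximal function and level-set decomposition. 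Since every ingredient in this scheme can be tracked to be independent of $\tilde{a}_o$, the resulting constant has the advertised structural dependence and closes the argument.
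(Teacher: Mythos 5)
Your high-level strategy coincides with the paper's: rewrite the equation via $\Phi_o$ so that $v$ solves an autonomous double-phase Dirichlet problem with constant coefficient $\tilde{a}_o = a_o|\ov{\M}|^{q-p}$, obtain the unweighted $L^d$ estimate with a constant independent of $\tilde{a}_o$, and then multiply by $|\ov{\M}|^{pd}$ and use \eqref{cond-M} to restore the weighted form. You also correctly identify the key technical point: the constant must be uniform in $\tilde{a}_o$, which can be arbitrarily large. The paper handles the middle step in one line by citing \cite[Theorem 2]{MR3985927}, which is precisely a Calder\'on--Zygmund estimate for the autonomous double-phase Dirichlet problem with a constant depending only on $(n,p,q,\nu,L,\Lambda,d)$ and not on the constant modulating coefficient or the ball radius $\rho\in(0,1]$.

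Where your argument has a genuine gap is the shift $u = v - v_0$ and the claim that $\tilde{A}(x,z) := \Phi_o(z+Dv_0(x)) - \Phi_o(Dv_0(x))$ ``inherits the double-phase growth and ellipticity conditions.'' It does not, at least not in the form needed. For instance, $|\tilde{A}(x,z)|$ is controlled by $|z+Dv_0(x)|^{p-1} + |Dv_0(x)|^{p-1} + \tilde{a}_o(|z+Dv_0(x)|^{q-1}+|Dv_0(x)|^{q-1})$, which is not bounded by $L(|z|^{p-1} + \tilde{a}_o |z|^{q-1})$ when $|Dv_0(x)| \gg |z|$; similarly the lower ellipticity bound degenerates near $z = -Dv_0(x)$. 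Shifted operators of $p$-Laplacian type satisfy ``shifted'' monotonicity and growth inequalities (with the shifted $N$-functions $\varphi_a$), not the unshifted ones, and using them requires a different Calder\'on--Zygmund machinery than the one you invoke. So this reformulation as a zero-boundary problem with source does not cleanly reduce to the standard CZ theory.

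Your fallback paragraph is the more promising route and is, in spirit, what the cited theorem actually does: a Caffarelli--Peral/good-$\lambda$ scheme where $v$ is locally compared with the homogeneous (interior and boundary) solutions of the same autonomous equation, and the Lipschitz bound of Lemma~\ref{lem:v-lip} (whose constant is already uniform in $\tilde{a}_o$) controls the comparison solutions. Carrying that out would give a self-contained proof with the advertised dependence, but it is a nontrivial undertaking; the paper simply outsources it to \cite[Theorem 2]{MR3985927} and notes the uniformity. In short: your first and last steps match the paper, the shifted-operator middle step as written does not close, and the good-$\lambda$ backup is the right replacement for it (and is essentially the content of the theorem the paper cites).
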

\begin{proof}
As in the proof of Lemma~\ref{lem:v-lip},
$v$ is the weak solution to the problem
\begin{align*}
\begin{cases}
\dive\, \Phi_o(Dv)=0 \quad\mbox{in $B$},\\
v=v_0\quad\mbox{on $\partial B$},
\end{cases}
\end{align*}
where $\Phi_o$ is given in \eqref{1004} with \eqref{1005}.
We have from \cite[Theorem 2]{MR3985927}
\begin{align*}
\bint_{B}\(|Dv|^p+\tilde{a}_o|Dv|^q\)^d\,dx
\le
c\bint_{B}\(|Dv_0|^p+\tilde{a}_o|Dv_0|^q\)^d\,dx,
\end{align*}
where $\tilde{a}_o=a_o|\overline{\M}|^{q-p}$ and $c=c(n,p,q,\nu,L,\Lambda,d)>0$.
We here point out that the constant $c$ is independent of $\tilde{a}_o$ and $B=B_\rho$ from $\rho \in(0,1]$.
Multiplying $|\ov{\M}|^{pd}$ on both sides in the previous inequality and using \eqref{cond-M},
we finish the proof.
\end{proof}

\section{Proof of Theorem~\ref{thm}}\label{sec5}

We are now ready to start the proof of Theorem~\ref{thm} following
the same spirit in the earlier papers \cite{AM07, MR3985927, CM3,
MR4361836}.

\begin{proof}[Proof of Theorem~\ref{thm}]
We assume that $(\log\M,A_p,A_q)$ is $\delta$-vanishing, namely,
\begin{align}\label{delta-ass}
\abs{\log\M}_{\BMO(\Omega)}\le\delta
\quad
\mbox{and}
\quad
\sup_{B\subset\Omega}
\bint_{B}\(\Theta_p+\Theta_q\)[B](x)\,dx\le\delta,
\end{align}
where $B$ is any ball contained in $\Omega$. 
Here $\delta>0$ will be determined later as a
sufficiently small value less than $W$ given in \eqref{601},
depending only on $\data$, $\gamma$, $\hat{\tau}$ and $\|H(\cdot,\M F)\|_{L^\gamma(\Omega_{\hat{\tau}})}$. Our proof proceeds in the
following steps.

{\it Step1: Exit time and covering.}
Let $B_R\subset\Omega_{2\hat{\tau}}$ be a ball with radius $R\le r_o$,
where $r_o\le1$ will be determined later, depending only on $\data$, $\gamma$, $\hat{\tau}$ and $\|H(\cdot,\M F)\|_{L^\gamma(\Omega_{\hat{\tau}})}$.
We first fix two arbitrary radii $\rho_1,\rho_2$ such that $R/2\le\rho_1<\rho_2\le R$,
and consider the upper level sets
\begin{equation*}
  E(r,\lambda):=\{x\in B_r: H(x,\M(x) Du(x))>\lambda\}  
\end{equation*}
for $\lambda>0$
and for balls $B_r$ which are concentric to $B_R$ with $r\in[R/2,R]$.
We observe that for a.e. $x_0\in E(\rho_1,\lambda)$
\begin{align}\label{1103}
\lim_{r\rightarrow0}
\bint_{B_r(x_0)}\[H(x,\M Du)
+
\frac{1}{\delta}[H(x,\M F)\]\,dx
>\lambda.
\end{align}
On the other hand, putting
\begin{align}\label{1105-2}
\lambda_R:=\bint_{B_R}\[H(x,\M Du)
+
\frac{1}{\delta}[H(x,\M F)\]\,dx,
\end{align}
we have for a.e. $x_0\in E(\rho_1,\lambda)$ and for $r\in(\frac{\rho_2-\rho_1}{40},\rho_2-\rho_1]$
\begin{align}\label{1104}
\bint_{B_r(x_0)}\[H(x,\M Du)
+
\frac{1}{\delta}[H(x,\M F)\]\,dx 
&<
\(\frac{40R}{\rho_2-\rho_1}\)^n
\bint_{B_{R}}\[H(x,\M Du)
+
\frac{1}{\delta}[H(x,\M F)\]\,dx\nonumber\\
&= \(\frac{40R}{\rho_2-\rho_1}\)^n \lambda_R =: \lambda_0.
\end{align}
Thanks to \eqref{1103}, \eqref{1104} and the absolute continuity of
the integral, we can select the maximal radius
$r_{x_0}\in(0,\frac{\rho_2-\rho_1}{40}]$ for a.e $x_0\in
E(\rho_1,\lambda)$ with $\lambda\ge\lambda_0$ so that
\begin{align*}
\bint_{B_{r_{x_0}}(x_0)}\[H(x,\M Du)
+
\frac{1}{\delta}[H(x,\M F)\]\,dx=\lambda
\end{align*}
and
\begin{align*}
 \bint_{B_{r}(x_0)}\[H(x,\M Du)
+
\frac{1}{\delta}[H(x,\M F)\]\,dx<\lambda
\quad\mbox{for all $r\in(r_{x_0},\rho_2-\rho_1]$.}
\end{align*}

Hereafter we always assume that $\lambda\ge\lambda_0$.
Using a Vitali type covering lemma, we therefore have
a countable family of mutually disjoint balls $B_{r_{x_i}}(x_i)$ with $x_i\in E(\rho_1,\lambda)\setminus \text{\it negligible set}$ and $r_{x_i}\in(0,\frac{\rho_2-\rho_1}{40}]$
such that
\begin{align}\label{1107-1}
\bint_{B_{r_{x_i}}(x_i)}\[H(x,\M Du)
+
\frac{1}{\delta}[H(x,\M F)\]\,dx
=
\lambda,
\end{align}
\begin{align*}
\bint_{B_r(x_i)}\[H(x,\M Du)
+
\frac{1}{\delta}[H(x,\M F)\]\,dx<\lambda
\quad\mbox{for all $r\in(r_{x_i},\rho_2-\rho_1]$},
\end{align*}
and
\begin{align}\label{1109-1}
E(\rho_1,\lambda)\setminus negligible\ set\subset\bigcup_{i\in\N}B_{5r_{x_i}}(x_i)\subset B_{\rho_2}.
\end{align}
In what follows, we abbreviate $r_i=r_{x_i}$ and
$B_i^{(l)}:=B_{lr_{x_i}}(x_i)$ for simplicity. In particular, it follows that
\begin{align*}
40r_{i}\le\rho_2-\rho_1\le R\le r_o\le1,
\quad
B_i^{(40)}\subset B_{\rho_2}
\end{align*}
and
\begin{align}\label{lambda-bdd}
\bint_{B_i^{(40)}}\[H(x,\M Du)
+
\frac{1}{\delta}[H(x,\M F)\]\,dx<\lambda
\end{align}
for all $i\in\N$.

\

{\it Step 2: Comparison estimates.}
Recalling $H(x,\M Du)\in L^1(\Omega)$,
we first consider the weak solution $h_i$ to the problem
\begin{align}\label{h-eq}
\begin{cases}
\dive\( \M A_p(x,\M Dh_i)+a(x)\M A_q(x,\M Dh_i) \)=0
\quad\mbox{in $B_i^{(40)}$},\\
h_i= u \quad\mbox{on $\partial B_i^{(40)}$}.
\end{cases}
\end{align}
By Lemma~\ref{lem:h-energy}, we have the energy estimate
\begin{align}\label{1111}
\int_{B_i^{(40)}}H(x,\M Dh_i)\,dx
\le
c\int_{B_i^{(40)}}H(x,\M Du)\,dx
\end{align}
for some $c=c(n,p,q,\nu,L)>0$.

We now make a comparison estimate between $u$ and $h_i$ to find that
\begin{align}\label{u-h}
&\bint_{B_i^{(40)}}\(|V_p(\M Du)-V_p(\M Dh_i)|^2+a(x)|V_q(\M Du)-V_q(\M Dh_i)|^2\)\dx\nonumber\\
&\le
\eps\bint_{B_i^{(40)}}H(x,\M Du)\,dx
+
c_\eps\bint_{B_i^{(40)}}H(x,\M F)\,dx
\end{align}
for any $\eps\in(0,1)$, where $c_\eps=c_\eps(n,p,q,\nu,L,\eps)>0$.
In view of Lemma~\ref{lem:testing}, we use $\varphi=u-h_i$ as a test
function to \eqref{eq-u} and \eqref{h-eq}, respectively, to get
\begin{align*}
&\bint_{B_i^{(40)}} \<A_p(x,\M Du)-A_p(x,\M Dh_i),\ \M Du-\M Dh_i\>\,dx\nonumber\\
&\quad+
\bint_{B_i^{(40)}} a(x)\<A_q(x,\M Du)-A_q(x,\M Dh_i),\ \M Du-\M Dh_i\>\,dx\nonumber\\
&= \bint_{B_i^{(40)}}\<G(x,\M F),\ \M Du-\M Dh_i\>\, dx.
\end{align*}
Using \eqref{v-v}, \eqref{G}, Young's inequality and \eqref{1111},
we then discover that for any $\eps>0$
\begin{align*}
&\bint_{B_i^{(40)}}\(|V_p(\M Du)-V_p(\M Dh_i)|^2+a(x)|V_q(\M Du)-V_q(\M Dh_i)|^2\)\dx\nonumber\\
&\le
c_\eps\bint_{B_i^{(40)}}H(x,\M F)\,dx
+
\tfrac{\eps}{2}\bint_{B_i^{(40)}}H(x,\M Du)\,dx
+
\tfrac{\eps}{2}\bint_{B_i^{(40)}}H(x,\M Dh_i)\,dx\nonumber\\
&\le
c_\eps\bint_{B_i^{(40)}}H(x,\M F)\,dx
+
\eps\bint_{B_i^{(40)}}H(x,\M Du)\,dx,
\end{align*}
where $c_\eps=c_\eps(n,p,q,\nu,L,\eps)>0$,
which yields \eqref{u-h}.

With $\ov{\M}=\(\M\)_{B_i^{(20)}}^{\log}$ and $\tilde{\sigma}>0$ as in Proposition~\ref{prop:u-higher} we see from
Lemma~\ref{cor:h}, Lemma~\ref{lem:h-higher} and \eqref{1111} that
there is a constant $\sigma_1=\sigma_1(\data)\in(0,\tilde{\sigma})$ such that
for any $\sigma\in(0,\sigma_1]$
\begin{align}\label{1115}
\bint_{B_i^{(20)}}\[H(x,\ov{\M}Dh_i)\]^{1+\frac{\sigma}{2}}\,dx
&\le
c\(\bint_{B_i^{(20)}}\[H(x,\M Dh_i)\]^{1+\sigma}\,dx\)^{\(1+\frac{\sigma}{2}\)\frac{1}{1+\sigma}}\nonumber\\
&\le
c\(\bint_{B_i^{(40)}}H(x,\M Dh_i)\,dx\)^{1+\frac{\sigma}{2}}\nonumber\\
& \le 
c\(\bint_{B_i^{(40)}}H(x,\M Du)\,dx\)^{1+\frac{\sigma}{2}}
\end{align}
and
\begin{align}\label{norm-h}
\|H(\cdot,\ov{\M}Dh_i)\|_{L^{1+\frac{\sigma}{2}}({B_i^{(20)}})}
&\le
c|B_i^{(20)}|^{\frac{\sigma}{(2+\sigma)(1+\sigma)}}
\|H(\cdot,\M Dh_i)\|_{L^{1+\sigma}({B_i^{(20)}})}\nonumber\\
&\le
cr_i^{\frac{n\sigma}{(2+\sigma)(1+\sigma)}}
\|H(\cdot,\M Dh_i)\|_{L^{1+\sigma}({B_i^{(40)}})}\nonumber\\
&\le
c\|H(\cdot,\M Du)\|_{L^{1+\sigma}({B_i^{(40)}})},
\end{align}
where $c=c(\data,\sigma)>0$, provided
that $\delta>0$ is selected sufficiently small, depending only on
$\data$.
In the last inequality we used $r_i<1$.

\

We next consider the weak solution $k_i$ to the problem
\begin{align}\label{k-eq}
\begin{cases}
\dive \( \ov{\M} A_p(x,\ov{\M} Dk_i)+a(x)\ov{\M} A_q(x,\ov{\M} Dk_i)\)
=
0
\quad\mbox{in $B_i^{(20)}$,}\\
k_i=h_i\quad\mbox{on $\partial B_i^{(20)}$},
\end{cases}
\end{align}
where $h_i$ is the weak solution to \eqref{h-eq}.
We have from Lemma \ref{lem:h-energy} and \eqref{1115} that
\begin{align}\label{1117}
\bint_{B_i^{(20)}}H(x,\ov{\M} Dk_i)\,dx
\le
c\bint_{B_i^{(20)}}H(x,\ov{\M} Dh_i)\,dx
& \le 
c\bint_{B_i^{(40)}}H(x,\M Du)\,dx
\end{align}
for some $c=c(\data)>0$.
Furthermore, in view of Remark~\ref{rem:h}, Lemma~\ref{lem:h-higher}, \eqref{1115} and \eqref{norm-h} 
there is a constant $\sigma_{2}\in(0,\frac{\sigma_1}{2})$ such that for any $\sigma\in(0,\sigma_2]$
\begin{align}\label{1118}
\bint_{B_i^{(20)}}\[H(x,\ov{\M}Dk_i)\]^{1+\sigma}\,dx
&\le
c\bint_{B_i^{(20)}}\[H(x,\ov{\M}Dh_i)\]^{1+\sigma}\,dx\nonumber\\
&\le 
c\(\bint_{B_i^{(40)}}H(x,\M Du)\,dx\)^{1+\sigma}
\end{align}
and
\begin{align}\label{norm-k}
\|H(\cdot,\ov{\M}Dk_i)\|_{L^{1+\sigma}(B_i^{(20)})}
&\le
c\|H(\cdot,\ov{\M}Dh_i)\|_{L^{1+\sigma}(B_i^{(20)})} \nonumber\\
&\le
c\|H(\cdot,\M Du)\|_{L^{1+2\sigma}(B_i^{(40)})},
\end{align}
where positive
numbers $\sigma_{2},c$ depend only on $\data$, if $\delta>0$ is small enough. Moreover,
\eqref{1118} yields $H(x,\M Dk_i)\in L^{1}(B_i^{(20)})$ from
Remark~\ref{rem:h2}, if $\delta$ is much smaller. This allows us to
use $\varphi=h_i-k_i$ as a test function in the weak formulation of
\eqref{h-eq} and \eqref{k-eq} to find
\begin{align*}
&\bint_{B_i^{(20)}} \<A_p(x,\M Dh_i)-A_p(x,\ov{\M}Dk_i),\ \M Dh_i-\ov{\M}Dk_i\>\,dx\nonumber\\
&\quad\quad+
\bint_{B_i^{(20)}} a(x)\<A_q(x,\M Dh_i)-A_q(x,\ov{\M}Dk_i), \ \M Dh_i-\ov{\M}Dk_i\>\,dx\nonumber\\
&=
\bint_{B_i^{(20)}} \<A_p(x,\M Dh_i)+a(x) A_q(x,\M Dh_i),\ (\M-\ov{\M}) Dk_i\>\,dx\nonumber\\
&\quad\quad-
\bint_{B_i^{(20)}}\<A_p(x,\ov{\M}Dk_i)+a(x)A_q(x,\ov{\M}Dk_i),\ (\M-\ov{\M})Dh_i\>\,dx.
\end{align*}
We then apply \eqref{v-v}, \eqref{ApAq} and Young's inequality,
which follows
that for any $\eps>0$
\begin{align}\label{1120}
&\bint_{B_i^{(20)}}\(|V_p(\M Dh_i)-V_p(\ov{\M} Dk_i)|^2+a(x)|V_q(\M Dh_i)-V_q(\ov{\M} Dk_i)|^2\)\dx\nonumber\\
&\le c
\bint_{B_i^{(20)}} \(|\M Dh_i|^{p-1}+a(x) |\M Dh_i|^{q-1}\) |(\M-\ov{\M}) Dk_i|\,dx\nonumber\\
&\quad\quad+
\bint_{B_i^{(20)}}\(|\ov{\M}Dk_i|^{p-1}+a(x)|\ov{\M}Dk_i|^{q-1}\) |(\M-\ov{\M})Dh_i|\,dx.\nonumber\\
&\le
\eps\bint_{B_i^{(20)}}H(x,\M Dh_i)\,dx
+
c_\eps\bint_{B_i^{(20)}}
\[\(\frac{|\M-\ov{\M}|}{|\ov{\M}|}\)^p
 +
\(\frac{|\M-\ov{\M}|}{|\ov{\M}|}\)^q\]
H(x,\ov{\M} Dk_i)\,dx\nonumber\\
&\quad\quad+
\eps\bint_{B_i^{(20)}}H(x,\ov{\M} Dk_i)\,dx
+
c_\eps\bint_{B_i^{(20)}}
\[\(\frac{|\M-\ov{\M}|}{|\ov{\M}|}\)^p
 +
\(\frac{|\M-\ov{\M}|}{|\ov{\M}|}\)^q\]
H(x,\ov{\M} Dh_i)\,dx,
\end{align}
where $c_\eps(n,p,q,\nu,L,\eps)>0$. Meanwhile, using Young's
inequality, Lemma~\ref{lem:m-m} and $\eqref{delta-ass}$, we have
\begin{align}\label{1121}
&\bint_{B_i^{(20)}}
\[\(\frac{|\M-\ov{\M}|}{|\ov{\M}|}\)^p
 +
\(\frac{|\M-\ov{\M}|}{|\ov{\M}|}\)^q\]
H(x,\ov{\M} V_i)\,dx\nonumber\\
&\le
c\(\bint_{B_i^{(20)}}
\[\(\frac{|\M-\ov{\M}|}{|\ov{\M}|}\)^p
 +
\(\frac{|\M-\ov{\M}|}{|\ov{\M}|}\)^q\]^{s'}\,dx\)^{\frac{1}{s'}}
\(\bint_{B_i^{(20)}}
\[H(x,\ov{\M} V_i)\]^{s}\,dx\)^{\frac{1}{s}}\nonumber\\
&\le
c\delta^p\(\bint_{B_i^{(20)}}
\[H(x,\ov{\M} V_i)\]^{s}\,dx\)^{\frac{1}{s}}\nonumber\\
&\le
c\delta^p\bint_{B_i^{(40)}}
H(x,\M Du)\,dx,
\end{align}
provided that $\delta>0$ is smaller, where $V_i\in\{Dk_i,Dh_i\}$,
$s=1+\sigma_{2}$ for $\sigma_{2}>0$ as in \eqref{1118} and
$c=c(\data)>0$. For the last inequality we used \eqref{1115}
and \eqref{1118}. We then combine \eqref{1111}, \eqref{1117},
\eqref{1120} and \eqref{1121} to deduce
\begin{align}\label{h-k}
&\bint_{B_i^{(20)}}\(|V_p(\M Dh_i)-V_p(\ov{\M} Dk_i)|^2+a(x)|V_q(\M Dh_i)-V_q(\ov{\M} Dk_i)|^2\)\,dx\nonumber\\
&\le
(c\eps+c_\eps\delta^p)\bint_{B_i^{(40)}}
H(x,\M Du)\,dx
\end{align}
for any $\eps>0$,
where both $c$ and $c_\eps$ depend on  $\data$,
while $c_\eps$ further depends on $\eps$ as well.

\

With
\begin{align*}
\ov{A}_p(\xi)=\bint_{B_i^{(20)}}A_p(x,\xi)\,dx\quad\mbox{and}\quad\ov{A}_q(\xi)=\bint_{B_i^{(20)}}A_q(x,\xi)
\ dx,    
\end{align*}
we next consider the weak solution $w_i$ to the problem
\begin{align}\label{w-eq}
\begin{cases}
\dive \(\ov{\M} \ov{A}_p(\ov{\M} Dw_i)+a(x)\ov{\M} \ov{A}_q(\ov{\M} Dw_i)\)
=
0
\quad\mbox{in $B_i^{(20)}$,}\\
w_i=k_i\quad\mbox{on $\partial B_i^{(20)}$},
\end{cases}
\end{align}
where $k_i$ is the weak solution to \eqref{k-eq}.
From \eqref{436-d} and \eqref{1117} we have 
\begin{align}\label{1124}
\bint_{B_i^{(20)}}H(x,\ov{\M}Dw_i)\,dx
\le
c\bint_{B_i^{(20)}}H(x,\ov{\M}Dk_i)\,dx
\le
c\bint_{B_i^{(40)}}H(x,\M Du)\,dx
\end{align}
for some $c=c(\data)>0$.
In addition, we see from \eqref{810-a}, \eqref{1118} and \eqref{norm-k}
\begin{align}\label{1126-1}
\bint_{B_i^{(20)}}[H(x,\ov{\M}Dw_i)]^{1+\sigma_{3}}\,dx
&\le
c\bint_{B_i^{(20)}}[H(x,\ov{\M}Dk_i)]^{1+\sigma_{3}}\,dx\nonumber\\
& \le 
c\(\bint_{B_i^{(40)}}H(x,\M Du)\,dx\)^{1+\sigma_{3}}
\end{align}
and
\begin{align}\label{norm-w}
\|H(\cdot,\ov{\M}Dw_i)\|_{L^{1+\sigma_{3}}(B_i^{(20)})}
&\le
c\|H(\cdot,\ov{\M}Dk_i)\|_{L^{1+\sigma_{3}}(B_i^{(20)})}\nonumber\\
&\le
c\|H(\cdot,\M Du)\|_{L^{1+2\sigma_{3}}(B_i^{(40)})}
\end{align}
for some $\sigma_3=\sigma_3(\data)\in(0,\sigma_2)$ and $c=c(\data)>0$.

Now we make the comparison estimate between $w_i$ and $k_i$. Testing
\eqref{w-eq} and \eqref{k-eq} with $\varphi = k_i-w_i$, we deduce
\begin{align*}
&\bint_{B_i^{(20)}}
\<
\ov{A}_p(\ov{\M}Dk_i)-\ov{A}_p(\ov{\M}Dw_i),\ov{\M}Dk_i-\ov{\M}Dw_i
\>\,dx\nonumber\\
&\quad\quad+
\bint_{B_i^{(20)}}
a(x)\<\ov{A}_q(\ov{\M}Dk_i)-\ov{A}_q(\ov{\M}Dw_i),
\ov{\M}Dk_i-\ov{\M}Dw_i\>\,dx\nonumber\\
&=
\bint_{B_i^{(20)}}
\<
\ov{A}_p(\ov{\M}Dk_i)-A_p(x,\ov{\M}Dk_i),\ov{\M}Dk_i-\ov{\M}Dw_i
\>\,dx\nonumber\\
&\quad\quad+
\bint_{B_i^{(20)}}
a(x)\<\ov{A}_q(\ov{\M}Dk_i)-A_q(x,\ov{\M}Dk_i),
\ov{\M}Dk_i-\ov{\M}Dw_i\>\,dx.
\end{align*}
We then apply \eqref{v-v}, \eqref{Theta} and Young's inequality to get
\begin{align}\label{1126}
\bint_{B_i^{(20)}}
&\(|V_p(\ov{\M}Dk_i)-V_p(\ov{\M}Dw_i)|^2
+
a(x)|V_q(\ov{\M}Dk_i)-V_q(\ov{\M}Dw_i)|^2\)\,dx\nonumber\\
&\le
c\bint_{B_i^{(20)}}
\Theta_p[B_i^{(20)}](x)|\ov{\M}Dk_i|^{p-1}|\ov{\M}Dk_i-\ov{\M}Dw_i| \, dx\nonumber\\
&\quad\quad+
c\bint_{B_i^{(20)}}
a(x)\Theta_q[B_i^{(20)}](x)||\M Dk_i|^{q-1}|\ov{\M}Dk_i-\ov{\M}Dw_i|\,dx\nonumber\\
&\le c_\eps\bint_{B_i^{(20)}}
(\Theta_p+\Theta_q)[B_i^{(20)}](x)H(x,\ov{\M}Dk_i)\,dx +
\eps\bint_{B_i^{(20)}} H(x,\ov{\M}Dw_i)\ dx
\end{align}
for any $\eps>0$, where $c_\eps=c_\eps(n,p,q,\nu,L,\Lambda,\eps)>0$.
For the last inequality we used $\Theta_p,\Theta_q\le 2L$.
Meanwhile, using the fact that $\Theta_p,\Theta_q\le 2L$, \eqref{1118} and \eqref{delta-ass}, we find that
\begin{align*}
&\bint_{B_i^{(20)}}
(\Theta_p+\Theta_q)[B_i^{(20)}](x)H(x,\ov{\M}Dk_i)\,dx\nonumber\\
&\le
c\[\bint_{B_i^{(20)}}
(\Theta_p+\Theta_q)[B_i^{(20)}](x)\,dx\]^{\frac{\sigma_{2}}{1+\sigma_{2}}}
\[\bint_{B_i^{(20)}}
H(x,\ov{\M}Dk_i)^{1+\sigma_{2}}\,dx\]^{\frac{1}{1+\sigma_{2}}}\nonumber\\
&\le c\delta^{\frac{\sigma_{2}}{1+\sigma_{2}}} \bint_{B_i^{(40)}}
H(x,\M Du)\ dx
\end{align*}
for some $c=c(\data)>0$.
Inserting this into \eqref{1126} and using \eqref{1124}, we have that
\begin{align}\label{k-w}
\bint_{B_i^{(20)}}
&\(|V_p(\ov{\M}Dk_i)-V_p(\ov{\M}Dw_i)|^2
+
a(x)|V_q(\ov{\M}Dk_i)-V_q(\ov{\M}Dw_i)|^2\)\,dx\nonumber\\
&\le (c\eps+c_\eps\delta^{\frac{\sigma_{2}}{1+\sigma_{2}}})
\bint_{B_i^{(40)}} H(x,\M Du)\,dx
\end{align}
for any $\eps>0$, where $c_\eps=c_\eps(\data,\eps)>0$.

\

We now find point $x_{i,o}\in\ov{B_i^{(10)}}$ such that
\begin{align*}
a_{i,o}:=a(x_{i,o})=\sup_{x\in B_i^{(10)}}a(x).
\end{align*}
We finally consider the weak solution $v_i$ to the problem
\begin{align}\label{v-eq}
\begin{cases}
\dive \(\ov{\M}\ov{ A}_p(\ov{\M} Dv_i)+a_{i,o}\ov{\M} \ov{A}_q(\ov{\M} Dv_i)\)
=0
\quad\mbox{in $B_i^{(10)}$},\\
v_i=w_i\quad\mbox{on $\partial B_i^{(10)}$},
\end{cases}
\end{align}
where $w_i$ is the weak solution to \eqref{w-eq}.
We then have the energy estimate
\begin{align}\label{1132}
\bint_{B_i^{(10)}}H(x_{i,o},\ov{\M} Dv_i)\,dx
\le
c\bint_{B_i^{(10)}}H(x_{i,o},\ov{\M} Dw_i)\,dx
\end{align}
for some $c=c(n,p,q,\nu,L)>0$.
Here, we point out that $Dw_i\in L^q(B_i^{(10)})$ by Lemma~\ref{lem:w-reg}.

We derive our final comparison estimate between $v_i$ and $w_i$.
Our approach is based on the same idea in \cite{MR3985927},
but we consider gradients with the constant matrix weight $\ov{\M}$
and use the results in the previous section.
Now, using $\varphi=v_i-w_i$ in the weak formulations of \eqref{w-eq} and \eqref{v-eq},
we obtain
\begin{align*}
&\bint_{B_i^{(10)}}
\<
\ov{A}_p(\ov{\M}Dv_i)-\ov{A}_p(\ov{\M}Dw_i),\ov{\M}Dv_i-\ov{\M}Dw_i
\>\,dx\nonumber\\
&\quad\quad+
\bint_{B_i^{(10)}}
a_{i,o}\<\ov{A}_q(\ov{\M}Dv_i)-\ov{A}_q(\ov{\M}Dw_i),
\ov{\M}Dv_i-\ov{\M}Dw_i\>\,dx\nonumber\\
&=
\bint_{B_i^{(10)}}
(a(x)-a_{i,o})\<\ov{A}_q(\ov{\M}Dw_i),
\ov{\M}Dv_i-\ov{\M}Dw_i\>\,dx.
\end{align*}
This gives by \eqref{v-v}
\begin{align}\label{1134}
&\bint_{B_i^{(10)}}
\(|V_p(\ov{\M}Dw_i)-V_p(\ov{\M}Dv_i)|^2
+
a(x)|V_q(\ov{\M}Dw_i)-V_q(\ov{\M}Dv_i)|^2\)\,dx\nonumber\\
&\le
\bint_{B_i^{(10)}}
\(|V_p(\ov{\M}Dw_i)-V_p(\ov{\M}Dv_i)|^2
+
a_{i,o}|V_q(\ov{\M}Dw_i)-V_q(\ov{\M}Dv_i)|^2\)\,dx\nonumber\\
&\le
c\(\osc_{B_i^{(10)}}a\)
\bint_{B_i^{(10)}}|\ov{\M}Dw_i|^{q-1}
|\ov{\M}Dv_i-\ov{\M}Dw_i|\,dx =: {(\rm{I})}
\end{align}
for some $c=c(n,p,q,\nu,L)>0$.
To estimate ({\rm{I}}),
we proceed by considering two cases:
\begin{align}\label{1135}
\inf_{B_i^{(10)}}a(x)>K[a]_{0,\alpha}r_{x_i}^{\alpha}
\end{align}
and
\begin{align}\label{1136}
\inf_{B_i^{(10)}}a(x)\le K[a]_{0,\alpha}r_{x_i}^{\alpha},
\end{align}
where $K>20$ is determined later, depending only on $\data$, $\gamma$, $\hat{\tau}$ and $\|H(\cdot,\M F)\|_{L^{\gamma}(\Omega_{\hat{\tau}})}$.

We first consider the case of \eqref{1135}, referred to the $(p,q)$-phase.
We then obtain for all $x\in B_i^{(10)}$
\begin{align}\label{1137}
\osc_{B_i^{(10)}}a
\le
20 [a]_{0,\alpha}r_{i}^\alpha
\le
\frac{20a(x)}{K},
\end{align}
which induces that
\begin{align}\label{1138}
a(x)
\le
a_{i,o}
\le
a(x)+\osc_{B_i^{(10)}}a
\le
2a(x).
\end{align}
From \eqref{1124}, \eqref{1132}, \eqref{1137} and \eqref{1138}, we
deduce
\begin{align}\label{1140}
({\rm{I}})
\le
\frac{c}{K}\bint_{B_i^{(10)}}H(x_{i,o},\ov{\M} Dw_i)\,dx
\le
\frac{c_1}{K}\bint_{B_i^{(40)}}H(x,\M Du)\,dx
\end{align}
for some $c_1=c_1(\data)>0$. 
This yields by \eqref{1134}
\begin{align*}
&\bint_{B_i^{(10)}}
\(|V_p(\ov{\M}Dw_i)-V_p(\ov{\M}Dv_i)|^2
+
a(x)|V_q(\ov{\M}Dw_i)-V_q(\ov{\M}Dv_i)|^2\)\,dx\nonumber\\
&\le
\frac{c_1}{K}\bint_{B_i^{(40)}}H(x,\M Du)\,dx.
\end{align*}
In particular, the second inequality in
\eqref{1140} arrives at
\begin{align}\label{1142-1}
\bint_{B_i^{(10)}}H(x_{i,o},\ov{\M} Dw_i)\,dx
\le
c\bint_{B_i^{(40)}}H(x,\M Du)\,dx
\end{align}
for some $c=c(\data)>0$.

We next consider the case of \eqref{1136}, referred to the $p$-phase.
We observe
\begin{align}\label{1142}
a_{i,o}
\le
20 [a]_{0,\alpha}r_{i}^\alpha
+
\inf_{B_i^{(10)}}a(x)
\le
2K[a]_{0,\alpha}r_{i}^\alpha,
\end{align}
from which we apply Lemma~\ref{lem:w-higher}, to have
\begin{align}\label{1143}
\(\bint_{B_i^{(10)}}
|\ov{\M} Dw_i|^q\,dx\)^{\frac{1}{q}}
\le
c\(\bint_{B_i^{(20)}}
H(x,\ov{\M} Dw_i)\,dx\)^{\frac{1}{p}}
\end{align}
for some $c=c(\data,K)>0$.
Now, going back to $({\rm{I}})$ in \eqref{1134}, Young's inequality implies
\begin{align}\label{1144}
({\rm{I}})
\le
c\bint_{B_i^{(10)}} a_{i,o}|\ov{\M}Dw_i|^q\,dx
+
c\bint_{B_i^{(10)}} a_{i,o}|\ov{\M}Dv_i|^q\,dx
\end{align}
for some $c=c(n,p,q,\nu,L)>0$.
Then the first integral on the right-hand side in \eqref{1144} is estimated as follows:
\begin{align}\label{1145}
\bint_{B_i^{(10)}} a_{i,o}|\ov{\M}Dw_i|^q\,dx
&\le
cr_{i}^\alpha \bint_{B_i^{(10)}} |\ov{\M}Dw_i|^q\,dx\nonumber\\
&\le
cr_{i}^{\alpha}
\(\bint_{B_i^{(20)}}
\[H(x,\ov{\M} Dw_i)\]^{1+\sigma_3}\,dx\)^{\frac{q-p}{p(1+\sigma_3)}}
\bint_{B_i^{(20)}}
H(x,\ov{\M} Dw_i)\,dx\nonumber\\
&\le
cr_{i}^{\kappa_1}
\|H(\cdot,\ov{\M} Dw_i)\|_{L^{1+\sigma_3}(B_i^{(20)})}^{\frac{q-p}{p}}
\bint_{B_i^{(20)}}
H(x,\ov{\M} Dw_i)\,dx\nonumber\\
&\le
c
\norm{H(\cdot, \M Du)}_{L^{1+2\sigma_3}(B_i^{(40)})}^{\frac{q-p}{p}}
r_{i}^{\kappa_1}\bint_{B_i^{(40)}}
H(x,\M Du)\,dx
\end{align}
for $\sigma_3>0$ as in \eqref{1126-1},
$\kappa_1:=\alpha-\frac{n(q-p)}{p(1+\sigma_3)}>0$ and $c=c(\data,K)>0$. Here
we used \eqref{1142}, \eqref{1143}, H\"older's inequality,
\eqref{norm-w}, \eqref{1124}, and \eqref{lavrentiev}.
Using $B_i^{(40)}\subset B_R\subset\Omega_{2\hat{\tau}}$ and Proposition~\ref{prop:u-higher}, 
this implies
\begin{align}\label{XX-MDw}
\bint_{B_i^{(10)}} a_{i,o}|\ov{\M}Dw_i|^q\,dx
&\le
cr_{i}^{\kappa_1}
\bint_{B_i^{(40)}}
H(x,\M Du)\,dx
\end{align}
for some $c=c(\data,K,\gamma,\hat{\tau},\|H(\cdot,\M F)\|_{L^{\gamma}(\Omega_{\hat{\tau}})})>0$.
From $r_i\le1$, the previous inequality and \eqref{1124} yield
\begin{align}\label{XX-H-xo-MDw}
\bint_{B_i^{(10)}}H(x_{i,o},\ov{\M} Dw_i)
\le
c\bint_{B_i^{(40)}}
H(x,\M Du)\,dx
\end{align}
for some $c=c(\data,K,\gamma,\hat{\tau},\|H(\cdot,\M F)\|_{L^{\gamma}(\Omega_{\hat{\tau}})})>0$.

Next, for the last integral in \eqref{1144},
we observe that
\begin{align*}
\frac{q^2}{p^2}\le\bigg(1+\frac{\alpha}{n}\bigg)^2<\frac{n}{n-2\beta}<\frac{n}{n-2\alpha}
\mbox{($=\infty$ when $\alpha=1$ and $n=2$)}
\end{align*}
for $\beta\in\left(\frac{n\alpha(\alpha+2n)}{2(n+\alpha)^2},\alpha\right)$,
which follows from Lemma~\ref{lem:w-reg}
$$H(x_{i,o},\ov{\M}Dw_i)\in L^{\frac{q}{p}}\(B_i^{(10)}\).$$
This leads to
\begin{align}\label{1147}
\bint_{B_i^{(10)}}
\[H(x_{i,o},\ov{\M} Dv_i)\]^{q/p}\,dx
\le
c\bint_{B_i^{(10)}}
\[H(x_{i,o},\ov{\M} Dw_i)\]^{q/p}\,dx
\end{align}
for some $c=c(n,p,q,\nu,L,\Lambda)>0$ in view of Lemma~\ref{lem:v-cz}.
In addition, we have 
\begin{align}\label{1147-1146}
\bint_{B_i^{(10)}} |\ov{\M}Dw_i|^q\,dx
\le
c
r_i^{-\frac{n(q-p)}{p(1+\sigma_3)}}
\bint_{B_i^{(40)}}
H(x,\M Du)\,dx
\end{align}
from the calculations in \eqref{1145},
and
\begin{align} \label{1146}
\bint_{B_i^{(10)}} |\ov{\M}Dw_i|^{q^2/p}\,dx
&\le
c\(\bint_{B_i^{(20)}}
H(x,\ov{\M} Dw_i)\,dx\)^{q^2/p^2}
\end{align}
from Lemma~\ref{lem:w-higher}, 
where $c=c(\data,K,\gamma,\hat{\tau},\|H(\cdot,\M F)\|_{L^{\gamma}(\Omega_{\hat{\tau}})})>0$.
Using \eqref{1147}-\eqref{1146} the last integral in \eqref{1144} is computed as follows:
\begin{align}\label{1148}
&\bint_{B_i^{(10)}} a_{i,o}|\ov{\M}Dv_i|^q\,dx\nonumber\\
&\le
cr_{i}^\alpha
\bint_{B_i^{(10)}}
\[H(x_{i,o},\ov{\M} Dv_i)\]^{q/p}\,dx\nonumber\\
&\le
cr_{i}^{\kappa_1}
\bint_{B_i^{(40)}}
H(x,\M Du)\,dx\nonumber\\
&\quad\quad+
cr_{i}^{\alpha(1+q/p)}
\(\bint_{B_i^{(20)}}
\[H(x,\ov{\M} Dw_i)\]^{1+\sigma_3}\,dx\)^{\frac{1}{1+\sigma_3}\(\frac{q^2}{p^2}-1\)}
\bint_{B_i^{(20)}}
H(x,\ov{\M} Dw_i)\,dx\nonumber\\
&\le
cr_{i}^{\kappa_1}
\bint_{B_i^{(40)}}
H(x,\M Du)\,dx
+
cr_{i}^{\kappa_2}
\norm{H(\cdot, \ov{\M} Dw_i)}_{L^{1+\sigma_3}(B_i^{(20)})}^{q^2/p^2-1}
\bint_{B_i^{(20)}}
H(x,\ov{\M} Dw_i)\,dx\nonumber\\
&\le
cr_{i}^{\kappa_1}
\bint_{B_i^{(40)}}
H(x,\M Du)\,dx
+
cr_{i}^{\kappa_2}
\norm{H(\cdot, \M Du)}_{L^{1+2\sigma_3}(B_i^{(40)})}^{q^2/p^2-1}
\bint_{B_i^{(20)}}
H(x,\ov{\M} Dw_i)\,dx\nonumber\\
&\le
cr_{i}^{\kappa_1}
\bint_{B_i^{(40)}}
H(x,\M Du)\,dx
\end{align}
for $\sigma_3>0$ as in \eqref{norm-w}, $\kappa_1 > 0$ as in
\eqref{1145},
$\kappa_2=\(1+\frac{q}{p}\)\(\alpha-\frac{n}{1+\sigma_3}\(\frac{q}{p}-1\)\)>0$,
and $c=c(\data,K,\gamma,\hat{\tau},\|H(\cdot,\M F)\|_{L^{\gamma}(\Omega_{\hat{\tau}})})>0$. 
Here we have also used \eqref{1142}, \eqref{norm-w},
\eqref{1124}, $B_i^{(40)}\subset \Omega_{2\hat{\tau}}$, Proposition~\ref{prop:u-higher}
and the fact that $0<r_i<1$ and $\kappa_1<\kappa_2$.
We now combine \eqref{1134}, \eqref{1144}, \eqref{XX-MDw} and
\eqref{1148} to discover that
\begin{align*}
&\bint_{B_i^{(10)}} \(|V_p(\ov{\M}Dw_i)-V_p(\ov{\M}Dv_i)|^2
+ a(x)|V_q(\ov{\M}Dw_i)-V_q(\ov{\M}Dv_i)|^2 \)\,dx  \nonumber\\
&\le
c_2r_{i}^{\kappa_1}\bint_{B_i^{(40)}}H(x,\M Du)\,dx
\end{align*}
for some $c_2=c_2(\data,K,\gamma,\hat{\tau},\|H(\cdot,\M F)\|_{L^{\gamma}(\Omega_{\hat{\tau}})})>0$.

Consequently, in both cases of \eqref{1135} and \eqref{1136}, we see
that
\begin{align}\label{w-v}
&\bint_{B_i^{(10)}}
\(|V_p(\ov{\M}Dw_i)-V_p(\ov{\M}Dv_i)|^2
+
a(x)|V_q(\ov{\M}Dw_i)-V_q(\ov{\M}Dv_i)|^2\)\,dx\nonumber\\
&\le \(\frac{c_1}{K}+c_2r_{i}^{\kappa_1}\)\bint_{B_i^{(40)}}H(x,\M
Du)\ dx,
\end{align}
where $c_1=c_1(\data)>0$ and $c_2=c_2(\data, K,\gamma,\hat{\tau},\|H(\cdot,\M F)\|_{L^{\gamma}(\Omega_{\hat{\tau}})})>0$. At the same
time, considering $K=30$ fixed in \eqref{1135} and \eqref{1136}, we
obtain from \eqref{1132}, \eqref{1142-1} and \eqref{XX-H-xo-MDw} that
\begin{align*}
\bint_{B_i^{(10)}}H(x_{i,o},\ov{\M}Dv_i)\,dx
\le
c\bint_{B_i^{(10)}}H(x_{i,o},\ov{\M}Dw_i)\,dx
\le
c\bint_{B_i^{(40)}}
H(x,\M Du)\,dx
\end{align*}
for some $c=c(\data,\gamma,\hat{\tau},\|H(\cdot,\M F)\|_{L^{\gamma}(\Omega_{\hat{\tau}})})>0$ and hence, by Lemma~\ref{lem:v-lip}, 
\begin{align}\label{1154}
\sup_{B_i^{(5)}}H(x,\ov{\M}Dv_i)
\le
\sup_{B_i^{(5)}}H(x_{i,o},\ov{\M}Dv_i)
\le c\bint_{B_i^{(40)}} H(x,\M Du)\,dx.
\end{align}

\

We finally combine \eqref{lambda-bdd}, \eqref{u-h}, \eqref{h-k},
\eqref{k-w} and \eqref{w-v} to discover
\begin{align}\label{1155}
\bint_{B_i^{(10)}}
\(|V_p(\M Du)-V_p(\ov{\M}Dv_i)|^2
+
a(x)|V_q(\M Du)-V_q(\ov{\M}Dv_i)|^2\)\,dx
\le
S\lambda,
\end{align}
where
\begin{align}
\label{1157-2} S\equiv S(\eps,\delta,K,r_o) =
c_o\eps+c_\eps\delta^\sigma+\frac{\bar{c}}{K}+c_*r_{o}^{\kappa_1}
\end{align}
for some positive numbers $c_o=c_o(\data)$, $\sigma=\sigma(\data)$,
$c_\eps=c_\eps(\eps,\data)$, $\bar{c}=\bar{c}(\data)$ and
$c_*=c_*(\data,K,\gamma,\hat{\tau},\|H(\cdot,\M F)\|_{L^{\gamma}(\Omega_{\hat{\tau}})})$. In addition, from \eqref{lambda-bdd} and
\eqref{1154}, we obtain
\begin{align}\label{1156}
\sup_{B_i^{(5)}}H(x,\ov{\M}Dv_i)
\le
c_l\lambda
\end{align}
for some $c_l=c_l(\data,\gamma,\hat{\tau},\|H(\cdot,\M F)\|_{L^{\gamma}(\Omega_{\hat{\tau}})})>0$.

\

{\it Step 3: Estimates over upper level sets.} We recall that
\begin{align*}
H(x,\M Du) \le 2\(|V_p(\M Du)-V_p(\ov{\M}Dv_i)|^2 + a(x)|V_q(\M
Du)-V_q(\ov{\M}Dv_i)|^2\) + 2H(x,{\ov{\M}}Dv_i)
\end{align*}
to find from \eqref{1155} and \eqref{1156} that
\begin{align*}
\int_{B_i^{(5)}\cap\{H(x,\M Du)>4c_l\lambda\}} H(x,\M Du)\,dx \le
40^n S\lambda \left|B_i^{(1)}\right|.
\end{align*}
In view of \eqref{1107-1} we have that
\begin{align*}
|B_i^{(1)}|
\le
\frac{2}{\lambda}
\int_{B_i^{(1)}\cap \{H(x,\M Du)>\lambda/4\}}
H(x,\M Du)\,dx
+
\frac{2}{\lambda}
\int_{B_i^{(1)}\cap \{H(x,\M F)>\delta\lambda/4\}}
\frac{1}{\delta}H(x,\M F)\,dx,
\end{align*}
which yields
\begin{align*}
&\int_{B_i^{(5)}\cap\{H(x,\M Du)>4c_l\lambda\}}
H(x,\M Du)\,dx\nonumber\\
&\le
80^n S
\int_{B_i^{(1)}\cap \{H(x,\M Du)>\lambda/4\}}
H(x,\M Du)\,dx
+
80^n S
\int_{B_i^{(1)}\cap \{H(x,\M F)>\delta\lambda/4\}}
\frac{1}{\delta}H(x,\M F)\,dx.
\end{align*}
Recalling \eqref{1109-1} and using the fact that balls
$\{B_i^{(1)}\}$ are mutually disjoint, we conclude
\begin{align}\label{1159}
&\int_{E(\rho_1,4c_l\lambda)}
H(x,\M Du)\,dx\nonumber\\
&\le
80^n S
\int_{E(\rho_2,\lambda/4)}
H(x,\M Du)\,dx
+
80^n S
\int_{B_{\rho_2}\cap \{H(x,\M F)>\delta\lambda/4\}}
\frac{1}{\delta}H(x,\M F)\,dx
\end{align}
for any $\lambda\ge\lambda_0$.

\

{\it Step 4: Final estimate.}
We now prove the final estimate.
We first define truncations
\begin{align*}
[H(x,\M Du)]_t:=
\min \{H(x,\M Du),t\}
\quad\mbox{for $t>0$}.
\end{align*}
We remark for $t>\lambda$ that $[H(x,\M Du)]_t>\lambda$ if and only if $H(x,\M Du)>\lambda$.
Using Fubini's theorem, we calculate for any $t>\lambda_0$
\begin{align}\label{1164-d}
&\int_{B_{\rho_1}}
[H(x,\M Du)]_t^{\gamma-1}H(x,\M Du)\,dx\nonumber\\
&\le
(4c_l\lambda_0)^{\gamma-1}\hspace{-0.1cm}
\int_{B_{\rho_2}}
H(x,\M Du)\,dx
+
\underbrace{(\gamma-1)(4c_l)^{\gamma-1}\hspace{-0.1cm}
\int_{\lambda_0}^{4t}
\lambda^{\gamma-2}
\int_{E(\rho_1,4c_l\lambda)}
H(x,\M Du)\,dxd\lambda}_{=:{\rm{(J)}}}.
\end{align}
For the estimate of $\rm{(J)}$, we use \eqref{1159}, a change of variables and Fubini's theorem, to
obtain
\begin{align*}
{\rm{(J)}}
&\le
c(n,\gamma) Sc_l^{\gamma-1}
\int_{0}^{t}
\lambda^{\gamma-2}
\int_{B_{\rho_2}\cap\{[H(x,Du)]_t>\lambda\}}
H(x,\M Du)\,dxd\lambda\nonumber\\
&\quad\quad+
c(n,\gamma)Sc_l^{\gamma-1}
\int_{0}^{\infty}
\lambda^{\gamma-2}
\int_{B_{\rho_2}\cap \{H(x,\M F)>\delta\lambda/4\}}
\frac{1}{\delta}H(x,\M F)\,dxd\lambda\nonumber\\
&\le
c(n,\gamma) Sc_l^{\gamma-1}
\int_{B_{\rho_2}}
[H(x,\M Du)]_t^{\gamma-1}H(x,\M Du)\,dx\nonumber\\
&\quad\quad+
c(n,\gamma)Sc_l^{\gamma-1}
\int_{B_{\rho_2}}
\frac{1}{\delta^\gamma}[H(x,\M F)]^\gamma\,dx.
\end{align*}
Inserting this into \eqref{1164-d}, we deduce
\begin{align}\label{1164}
&\int_{B_{\rho_1}}
[H(x,\M Du)]_t^{\gamma-1}H(x,\M Du)\,dx\nonumber\\
&\le
\tilde{c}c_l^{\gamma-1}\lambda_0^{\gamma-1}
\int_{B_{\rho_2}}
H(x,\M Du)\,dx
+
\tilde{c} c_l^{\gamma-1} S
\int_{B_{\rho_2}}
[H(x,\M Du)]_t^{\gamma-1}H(x,\M Du)\,dx\nonumber\\
&\quad\quad+
\tilde{c}c_l^{\gamma-1} \delta^{-\gamma}S
\int_{B_{\rho_2}}
[H(x,\M F)]^\gamma\,dx
\end{align}
for some $\tilde{c}=\tilde{c}(n,\gamma)>0$. Recalling
\eqref{1157-2}, we now select $\eps,\delta,K$ and $r_o$ in order to
get
\begin{align*}
\tilde{c} c_l^{\gamma-1} S\le\frac12.
\end{align*}
We first take
\begin{align}\label{1169}
 K=\max\{8\tilde{c}c_l^{\gamma-1}\ov{c},30\}
 \quad\mbox{and}\quad
 \eps=\frac{1}{8\tilde{c}c_l^{\gamma-1}c_o}
 \end{align}
so that constants $c_*$ and $c_\eps$ in \eqref{1157-2} are
determined depending on $\data,\gamma,\hat{\tau}$ and $\|H(\cdot,\M F)\|_{L^{\gamma}(\Omega_{\hat{\tau}})}$. We next choose $\delta$ and $r_o$
such that
\begin{align}\label{1170}
0<\delta\le\min\left\{\(\frac{1}{8\tilde{c}c_l^{\gamma-1}c_\eps}\)^{1/\sigma},W\right\}
\quad\mbox{and}\quad
0<r_o
\le
\min\left\{\(\frac{1}{8\tilde{c}c_l^{\gamma-1}c_*}\)^{1/\kappa_1},1\right\}.
\end{align}
With the choices in \eqref{1169} and \eqref{1170}, we recall
\eqref{1104} and \eqref{1164} to have
\begin{align*}
&\int_{B_{\rho_1}}
[H(x,\M Du)]_t^{\gamma-1}H(x,\M Du)\,dx\nonumber\\
&\le
\tfrac{1}{2}
\int_{B_{\rho_2}}
[H(x,\M Du)]_t^{\gamma-1}H(x,\M Du)\,dx
+
c
\int_{B_{R}}
[H(x,\M F)]^\gamma\,dx\nonumber\\
&\quad\quad+
c
\(\frac{40R}{\rho_2-\rho_1}\)^{n(\gamma-1)}
\lambda_R^{\gamma-1}
\int_{B_{R}} H(x,\M Du)\,dx
\end{align*}
for any $R/2\le\rho_1<\rho_2\le R$, where $c=c(\data,\gamma,\hat{\tau},\|H(\cdot,\M F)\|_{L^{\gamma}(\Omega_{\hat{\tau}})})>0$. 
We now apply Lemma~\ref{lem:st} to obtain
\begin{align*}
\int_{B_{R/2}}
[H(x,\M Du)]_t^{\gamma-1}H(x,\M Du)\,dx
\le
c\lambda_R^{\gamma-1}
\int_{B_{R}} H(x,\M Du)\,dx
+
c\int_{B_{R}}
[H(x,\M F)]^\gamma\,dx.
\end{align*}
Letting $t\rightarrow\infty$, using Fatou's lemma and recalling
\eqref{1105-2}, we finally conclude
\begin{align*}
\bint_{B_{R/2}}
[H(x,\M Du)]^{\gamma}\,dx
\le
c\(\bint_{B_{R}} H(x,\M Du)\,dx\)^\gamma
+
c\bint_{B_{R}}
[H(x,\M F)]^\gamma\,dx
\end{align*}
for any $0<R\le r_o$ with $r_o=r_o(\data,\gamma,\hat{\tau},\|H(\cdot,\M F)\|_{L^{\gamma}(\Omega_{\hat{\tau}})})\in(0,1]$ as in
\eqref{1170}, where $c=c(\data,\gamma,\hat{\tau},\|H(\cdot,\M F)\|_{L^{\gamma}(\Omega_{\hat{\tau}})})>0$.
\end{proof}

\subsection*{Conflict of interest}
The authors declare that there is no conflict of interest.

\subsection*{Data availability}
Data sharing not applicable to this article as no datasets were generated or analyzed during the current study.

\subsection*{Acknowledgements}
The authors sincerely thank the anonymous reviewer for their careful reading of the manuscript and for several important comments and suggestions that improved both the exposition and the mathematical rigor of the paper.


\providecommand{\bysame}{\leavevmode\hbox to3em{\hrulefill}\thinspace}
\providecommand{\MR}{\relax\ifhmode\unskip\space\fi MR }
\providecommand{\MRhref}[2]{%
  \href{http://www.ams.org/mathscinet-getitem?mr=#1}{#2}
}
\providecommand{\href}[2]{#2}

\end{document}